\def\1{\mathbbm{1}}
\newcommand{\R}{{\mathbb R}}
\newcommand{\T}{{\mathbb T}}                   % L'ensemble des réels
\newcommand{\N}{{\mathbb N}}                  % L'ensemble des Entiers
\newcommand{\C}{{\mathbb C}}                  % L'ensemble des complexes
\newcommand{\F}{{\mathbb F}}
\newcommand{\A}{{\mathbb A}}
\def\al{\alpha}
\def\om{\omega}
\def\Om{\Omega}
\def\ga{\gamma}
\def\si{\sigma}
\def\la{\lambda}
\def\t{\tau}
\def\La{\Lambda}
\def\calQ{{\mathcal{Q}}}
\def\calM{{\mathcal{M}}}
\def\calL{{\mathcal{L}}}
\def\calT{{\mathcal{T}}}
\def\calS{{\mathcal{S}}}
\def\calG{{\mathcal{G}}}
\def\calA{{\mathcal{A}}}
\def\calC{{\mathcal{C}}}
\def\calX{{\mathcal{X}}}
\def\calK{{\mathcal{K}}}
\def\calF{{\mathcal{F}}}
\def\calH{{\mathcal{H}}}
\def\R{\mathbb R}
\def\P{\mathbb P}
\def\S{\mathbb S}
\def\C{\mathbb C}
\def\N{\mathbb N}
\def\E{\mathbb E}
\def\F{\mathbb F}
\def\K{\mathbb K}
\def\L{\mathbb L}
\def\T{\mathbb T}
\def\X{\mathbb X}
\crefname{hypothesis}{Hypothesis}{Hypotheses}
\title{A functional analytic approach to infinite dimensional stochastic linear systems
}
\author{Fatima Zahra Lahbiri and Said Hadd\thanks{Department of Mathematics, Faculty of Sciences, Ibn Zohr University, Hay Dakhla, BP8106, 80000--Agadir, Morocco
  (\email{fatimazahra.lahbiri@gmail.com, s.hadd@uiz.ac.ma}).}
}
\begin{document}

\maketitle

% REQUIRED
\begin{abstract}
%In this paper, we will present a functional analytical approach to the well-posedness of infinite dimensional input-output linear stochastic systems.
%It is in fact a stochastic version of known linear Salamon-Weiss systems.
%On the other hand, we prove an observability result of a class of semilinear stochastic systems as well as a controllability result of
% stochastic boundary control systems.
% In addition, we establish a new variation of constants formula for the perturbed partial  differential stochastic equations.
% The results obtained are applied to a general class of delay stochastic systems. Several examples are also given.
In this paper, we study infinite dimensional stochastic systems having both unbounded control and observation operators. First of all, using a semigroup approach,  we give another take of the well-posedness of such systems treated in [SIAM J. Control Optim., 53 (2015), pp. 3457--3482]. Second, we propose a new variation of constants formula for mild solutions of perturbed abstract stochastic Cauchy problems using the concept of Yosida extensions of admissible operators. Third, we prove the well-posedness of perturbed boundary control systems. Fourth, we apply this result to a general class of stochastic systems with delays in state, control and observation parts. Finally, we study admissible observation operators and exact observability for semilinear stochastic systems.
\end{abstract}

% REQUIRED
\begin{keywords}
Stochastic well-posed systems, admissible operators,  Yosida extensions, observability of semilinear stochastic systems, stochastic delay systems
\end{keywords}

% REQUIRED
\begin{AMS}
  	93E03, 93B35, 93C05, 93C73
\end{AMS}

\section{Introduction}\label{sec:intro}
In this paper we are concerned with the study of abstract stochastic systems having unbounded control and observation operators. It seems that the work \cite{flandoli1} is one of the first papers dealing with the existence and regularity of mild solutions of boundary control stochastic systems. A generalization of the concept of well-posed linear system to stochastic systems has recently been obtained in \cite{Lu-SIAM-15}. More recently, the well-posedness of boundary controlled and observed stochastic Port-Hamiltonian systems has been investigated in  \cite{LW-19} using an approach somewhat similar to that of \cite{Lu-SIAM-15}. All of these references consider a distributed noise. Another way to control a system is to consider a noise at the boundary conditions. These types of systems have been considered in e.g. \cite{Al-Bo-02}, \cite{Da-Za-SSR}, \cite{Fa-GO-09}. For works dealing with controllability and observability of infinite dimensional stochastic systems using Carleman estimate we refer to \cite{Lu-JDE-13}, \cite{Lu-SIAM-13} and \cite{Lu-Zhang-21}. In terms of facts, stochastic systems with bounded control and observation operators are well investigated in the literature, see e.g. \cite{cerrai0}, \cite{cerrai1}, \cite{Duncan-12}, \cite{Duncan-13}, \cite{Nazim2001} and the references therein. Particularly, in \cite{Duncan-12}, \cite{Duncan-13} the noise is governed by fractional Brownian motions. We note that the problem of ergodic control was recently studied in \cite{Alb}. We also refer to \cite{Alb1}, \cite{Alb2}, \cite{Alb3} for  a study of infinite dimensional stochastic systems using the concept of  invariant measures.

In this paper, we combine stochastic analysis with the well-established class of deterministic regular linear systems to study the well-posedness and observability of stochastic systems with unbounded control and observation operators. Using a functional analytic approach, we first propose another take to the approach introduced in \cite{Lu-SIAM-15}. Furthermore, we propose a new variation of constants formula for the mild solutions of perturbed stochastic abstract Cauchy problems using the concept of Yosida extensions of admissible observation operators. The obtained results are applied to the well-posedness of stochastic linear systems with distributed delays in state, control and observation parts. We also study admissible observation operators for semilinear stochastic systems. In order to clearly state our results, we introduce some notations. Hereafter $H,U$ and $\mathscr{Y}$ are Hilbert spaces, $(\Omega,\calF,\F,\P)$ is a filtered probability space with a natural filtration $\F=(\calF_t)_{t\ge 0}$ which is generated by the standard one-dimensional Brownian motion $(W(t))_{t\ge 0}$. For $t<0$, $\calF_t$ is taken to be $\calF_{0}.$  We consider $A:D(A)\subset H\to H$ is a generator of a $C_0$-semigroup $\T:=(T(t))_{t\ge 0}$ on  $H,$  and denote by $H_{-1}$ is the completion of $H$ with respect to the norm $\|x\|=\|R(\la,A)x\|$ for some (hence all) $\la\in\rho(A)$ and $x\in H$.

We are concerned with the well-posedness and control properties of the following abstract input/output stochastic system
\begin{align}\label{ABC-uy}
\begin{cases}
dX(t)=(AX(t)+Bu(t))dt+\mathscr{M}(X(t))dW(t),& t>0,\quad X(0)=\xi,\cr Y(t)=CX(t),& t> 0.
\end{cases}
\end{align}
where,  $B\in\calL(U,H_{-1})$ is a control operator, $C\in\calL(D(A),\mathscr{Y})$ is an observation operator and $\mathscr{M}\in\calL(H)$. The initial process $\xi\in L^2_{\calF_0}(\Om,H)$ and the control function $u\in L^2_{\F}(0,+\infty;U)$ (see the definition of this space below).

In the absence of the noise (i.e. the deterministic case), the system \eqref{ABC-uy} is well studied, see e.g. \cite{Cu-Zw}, \cite{Sala}, \cite{Staf}, \cite{TucWei}, \cite{WeiRegu} and the references therein. The unboundedness of the operators $B$ and $C$ leads to the notion of admissibility in order to have continuous dependence of $L^2$-controls or continuous dependance of $L^2$-observations on states, see \cite{TucWei}. For $L^2$-well-posed linear systems one has in addition continuous dependance of $L^2$-observation on $L^2$-controls, see \cite{Staf}. On the other hand, Weiss \cite{WeiRegu} introduced an importance subclass of infinite-dimensional well-posed linear systems called regular linear systems for which one can study feedback theory and feedback stabilization.

In the presence of the noise, the well-posedness of the system \eqref{ABC-uy} was recently studied in \cite{Lu-SIAM-15}, where the main objective was to extend the concept of well-posed linear system to the stochastic system \eqref{ABC-uy}. In fact, the author defined a concept of admissibility for the unbounded operators $B$ and $C$ similar to the deterministic systems. He also proved that the mild solution to the system   \eqref{ABC-uy} is also a weak solution to \eqref{ABC-uy}, which is important if one want to apply It\^{o} formula.

In order to summarize the results of the present paper, we introduce the following notation. For any sub-$\si$-algebra $\mathscr{N}$ of $\calF$, denote by  $L^{2}_{\mathscr{N}}(\Om,H)$ the set of all $\mathscr{N}$--measurable ($H$-valued) random variables $\xi:\Om\to H$ such that $\E\|\xi\|^2<\infty$. On the other hand, if $\mathscr{X}$ is a Hilbert space and $\t\in(0,\infty]$ a real number, we denote
\begin{align*}
L^2_{\F}(0,\t;\mathscr{X}) :=\left\{\zeta:[0,\t]\times \Om\to \mathscr{X}:\zeta \;\text{is}
\;\F-\text{adapted and}\; \int^\t_0 \mathbb{E}\|\zeta(t)\|_{\mathscr{X}}^2<\infty\right\}
\end{align*}
and the space of  mild solutions for stochastic equations
\begin{align*}
& \mathcal{C}_{\F}\left(0,\t;L^2(\Om,\mathscr{X})\right)\cr & \quad :=\left\{\zeta:[0,\t]\times \Om\to \mathscr{X}:\zeta \;\text{is}\;\F-\text{adapted and}\; t\mapsto\left(\mathbb{E}\|\zeta(t)\|_{\mathscr{X}}^2\right)^{\frac{1}{2}}\;\text{is continuous}\right\}.
\end{align*}
For any $\al>0$ and $\zeta\in L^2_{\F}(0,\al;H),$ we denote
\begin{align*}
(\T\diamond \zeta)(t):=\int^t_0T(t-s)\zeta(s)dW(s),\qquad t\in [0,\al].
\end{align*}

In Section \ref{Section2}, on the one hand, we recall the concept of regular linear systems in Weiss sense \cite{WeiRegu}. On the other hand, by assuming that $B$ is an admissible control operator in deterministic sense and  reformulating the stochastic differential equation in \eqref{ABC-uy} to an abstract stochastic Cauchy problem on a product space, we given a new proof to the existence of a unique mild solution $X(\cdot)\in \mathcal{C}_{\F}\left(0,+\infty;L^2(\Om,H)\right)$ to the system \eqref{ABC-uy}, see Theorem \ref{thm-existence}. Furthermore, if we assume that $(A,B,C)$ is a regular system in the deterministic sense and if we denote by $C_\Lambda$ the Yosida extension of $C$ for $A$, then the mild solution $X(\cdot)$ of the stochastic system \eqref{ABC-uy} satisfies $X(t)\in D(C_{\Lambda})$ for a.e. $t>0$, $\P$-a.s, and
\begin{align*}
\|C_\Lambda X(\cdot)\|_{L^2_{\F}(0,\al;\mathscr{Y})}\le \kappa \left(\|\xi\|_{L^{2}_{\calF_0}(\Om,H)}+ \|u\|_{L^2_{\F}(0,\al;U)}\right)
\end{align*}
for any $\al>0,$ $\xi\in L^{2}_{\calF_0}(\Om,H)$ and $u\in L^2_{\F}(0,\al;U)$, where $\kappa:=\kappa(\al)>0$ is a constant, see Theorem \ref{ABCW-wellposed}. The key of the proof of this theorem is the following results on the stochastic convolution: if $C$ is an admissible observation operator for $A$ (in the deterministic sense) and $\zeta\in L^2_{\F}(0,\al;H),$ then  $(\T\diamond \zeta)(t)\in D(C_\Lambda)$ for a.e. $t\ge 0,$ $\P$-a.s., and
\begin{align}\label{C-Tcv}
\mathbb{E}\int_{0}^{\al}\|C_{\Lambda}(\T\diamond \zeta)(t)\|^{2}_{\mathscr{Y}} dt\leq \ga^2\mathbb{E}\int_{0}^{\al}\|\zeta(s)\|^{2}_{H}ds
\end{align}
for some constant $\ga:=\ga(\al)>0$ independent of $\zeta$, see Proposition \ref{fondamental-lemma}. We mention that we cannot permute the sign integral with the operator $C$ (or $C_\Lambda$) because in systems theory $C$ is neither closed nor closeable.

In Section \ref{sec:new-VCF}, we first prove that for an admissible observation operator $\mathscr{P}\in\calL(D(A),H)$ for $A$, the following perturbed stochastic Cauchy problem
 \begin{align}\label{perturbed-S-Cauchy}
 dX(t)=(A+\mathscr{P})X(t)dt+\mathscr{M}(X(t))dW(t),\quad t>0,\quad X(0)=\xi,
 \end{align}
 has a unique mild solution if $\mathscr{P}\in\calL(D(A),H)$ such $X(t)\in D(\mathscr{P}_{\Lambda})$ for a.e. $t>0,$ $\P$-a.s., and
 \begin{align*}
 X(t)=T(t)\xi+\int^t_0 T(t-s)\mathscr{P}_{\Lambda}X(s)ds+\int^t_0 T(t-s)\mathscr{M}(X(t))dW(s)
 \end{align*}
 for any $t\ge 0$ and $\xi\in L^2_{\calF_0}(\Om,H)$, where $\mathscr{P}_{\Lambda}$ is the Yosida extension of $\mathscr{P}$ for $A$, see Theorem \ref{new variation}.  With the help of this formula we prove the well-posedness of the perturbed boundary control stochastic problem defined by \eqref{P-ABC}. This result extend the result in \cite{Lu-SIAM-15} because the latter uses bounded perturbations of the generator.

In Section \ref{sec:delay}, we prove that a general class of delay systems (see the system \eqref{SSdelay}) is well-posed. This extends the results obtained in \cite{Ha-Id-IMA} which deals with deterministic delay systems.

In Section \ref{sec:semilinear}, using the inequality \eqref{C-Tcv}, we prove the well-posedness and exact observability of the semilinear stochastic system
 \begin{align}\label{semilinear-S}
\begin{cases}
dX(t)=(A X(t)+G(X(t)))dt+F(X(t))dW(t),\quad X(0)=\xi,& t\ge 0,\cr  Y(t)=CX(t),& t\ge 0.
\end{cases}
\end{align}
where $G$ and $F$ are nonlinear applications from $H$ to $H$ satisfying suitable (globally) Lipschitz conditions. These results extend those obtained in \cite{BJ-09} for semilinear deterministic systems.

%Please note that due to page count limitations in the SIAM journal, we do not give details of known results or simple calculations, especially in some examples below. However, we refer the reader to the references containing these details.

\section{Well-posedness of input-output stochastic linear systems}\label{Section2}
The object of this section is to introduce a semigroup approach to the well-posedness of the  stochastic linear system \eqref{ABC-uy}. Before doing so, we first give a concise background on the well established theory of regular linear systems in the Weiss sense \cite{WeiRegu}, \cite{WeiTrans}.
In the sequel we use the notation introduced in Section \ref{sec:intro} related to the system \eqref{ABC-uy}. We also denote by $\T_{-1}=(T_{-1}(t))_{t\ge 0}$ the $C_0$-semigroup on $H_{-1}$, extension of the semigroup $\T$ to $H_{-1}$. The generator of $\T_{-1}$ is the extension of $A$ to $H,$ which will be denoted by $A_{-1}:H\to H_{-1}$, see e.g. \cite[Chap.II]{EngNag} for more details on extrapolation theory. Moreover, for any operator $M\in\calL(D(A),\X)$ ($\X$ is a Hilbert space), we consider its {\em Yosida extension} for $A$, defined by
\begin{align}\label{Yosida-extension-def}
\begin{split}
D(M_\Lambda)&:=\left\{x\in H: \lim_{\la\to +\infty}  M\la R(\la,A)x\;\text{exists in}\;\X\right\},\cr M_\Lambda x&:= \lim_{\la\to +\infty}  M\la R(\la,A)x.
\end{split}
\end{align}
This operator depends on $A,$ so $M_\Lambda$ may change relative to another generator, see \cite{Ha-Id-SCL}. As we will see below a such operator will play an important role in the representation of output functions of infinite-dimensional linear systems.

\subsection{Regular linear systems}\label{sec:RLS}
An operator $B\in\calL(U,X_{-1})$ is called an {\em admissible control operator} for $A,$ if there exists a real $\t>0$ such that
\begin{align}\label{control-maps}
\Phi_\t u:=\int^\t_0 T_{-1}(t-s)Bu(s)ds\in H
\end{align}
for any  $u\in L^2([0,+\infty),U)$. By the closed graph theorem, the admissibility of $B$ for $A$ implies that $\Phi_t:L^2([0,+\infty),U)\to H$ is a linear bounded for any $t\ge 0$, see \cite[Chap. 4]{TucWei}.

We say that the system $(A,B)$ is $\t$-exactly controllable ($\t>0$) is $B$ is an admissible control operator for $A$ and ${\rm range}(\Phi_\t)=H$.

An operator $C\in\calL(D(A),\mathscr{Y})$ is called an {\em admissible observation operator} for $A,$ if for some (hence all) $\al>0,$ there exists a constant $\ga:=\ga(\al)>0$ such that
\begin{align}\label{Obser-Estim0}
\int^{\al}_0 \|CT(t)x\|^2_{\mathscr{Y}} dt\le \ga^2 \|x\|^2,\quad \forall x\in D(A).
\end{align}
The admissibility of $C$ for $A$ implies that the map $\Psi:D(A)\to L^2([0,\al],\mathscr{Y})$ defined by $(\Psi x)(t)=CT(t)x$ has a bounded extension to $H$. This map is called the {\em extended output map} associated with $A$ and $C$. In this case, it is shown in \cite[Chap. 4]{TucWei} that for any $x\in H,$ $T(t)x\in D(C_\Lambda)$ for a.e. $t>0$, and $\Psi x=C_\Lambda T(\cdot)x$ a.e. on $(0,+\infty)$. Note that if $C$ is admissible for $A$, then we can replace $C$ by $C_\Lambda$ and the inequality \eqref{Obser-Estim0} holds for any $x\in H$.

The proof of the following result follows immediately from \cite[Proposition 3.3]{Hadd-SF}.
\begin{proposition}\label{Hadd-SF-prop}
Assume that $C\in\calL(D(A),\mathscr{Y})$ is an admissible observation operator for $A$. For any $\al>0$ there exists a constant $\ga:=\ga(\al)>0$ such that for  any $\zeta\in L^2_{\F}(0,\al;H)$,
\begin{align*}&\int_0^t T(t-s)\zeta(s)ds\in D(C_{\Lambda}),\quad for\;a.e.\; t\geq0,\; \P-a.s.\; and,\cr & \E\int^\al_0\left\|C_{\Lambda}\int_0^t T(t-s)\zeta(s)ds\right\|_{\mathscr{Y}}^2dt\leq \al \ga^2\; \E\int^\al_0 \|\zeta(s)\|_H^2 ds.\end{align*}
\end{proposition}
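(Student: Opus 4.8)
The plan is to reduce the statement to the deterministic admissibility property of $C$ together with a stochastic Fubini argument. Write $\zeta\in L^2_{\F}(0,\al;H)$ and consider the ordinary (pathwise) convolution $(\T*\zeta)(t):=\int_0^t T(t-s)\zeta(s)\,ds$. Since $C$ is an admissible observation operator for $A$, the cited result \cite[Proposition 3.3]{Hadd-SF} applies pathwise in $\om$ to the $H$-valued function $s\mapsto\zeta(s,\om)$ (which is in $L^2(0,\al;H)$ for a.e. $\om$ because $\E\int_0^\al\|\zeta(s)\|_H^2\,ds<\infty$), giving that $(\T*\zeta)(t,\om)\in D(C_\Lambda)$ for a.e. $t$, and
\begin{align*}
\int_0^\al\Big\|C_\Lambda\!\int_0^t T(t-s)\zeta(s,\om)\,ds\Big\|_{\mathscr{Y}}^2\,dt\le \al\,\ga^2\int_0^\al\|\zeta(s,\om)\|_H^2\,ds.
\end{align*}
Taking expectations then yields precisely the asserted inequality. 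So the statement is, in essence, the "expected value" version of the deterministic result of \cite{Hadd-SF}; the role of $L^2_{\F}$-adaptedness is only to guarantee finiteness of the right-hand side and measurability of the process $t\mapsto C_\Lambda(\T*\zeta)(t)$.

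First I would record carefully why the deterministic proposition is applicable fiberwise: by Tonelli, $\int_0^\al\|\zeta(s,\om)\|_H^2\,ds<\infty$ for $\P$-a.e. $\om$, so for such $\om$ the function $\zeta(\cdot,\om)$ is a legitimate input to \cite[Proposition 3.3]{Hadd-SF}; this is where I pin down the $\P$-a.s. quantifier and the "for a.e. $t$" quantifier in the statement. Second, I would address measurability: the map $(t,\om)\mapsto(\T*\zeta)(t,\om)$ is jointly measurable (continuous in $t$, measurable in $\om$), and since the convergence $C\la R(\la,A)x\to C_\Lambda x$ defining the Yosida extension is a countable limit along $\la\in\N$, the set $\{(t,\om):(\T*\zeta)(t,\om)\in D(C_\Lambda)\}$ is measurable and $(t,\om)\mapsto C_\Lambda(\T*\zeta)(t,\om)$ is jointly measurable on it; adaptedness in $t$ is inherited from that of $\zeta$ because the convolution at time $t$ depends only on $\zeta|_{[0,t]}$. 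Third, with joint measurability in hand I would apply the Tonelli/Fubini theorem to interchange $\E$ and $\int_0^\al dt$ and integrate the pathwise estimate above.

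The main obstacle I anticipate is the bookkeeping around measurability and the "for a.e. $t$, $\P$-a.s." statement — specifically, verifying that the exceptional sets from the deterministic proposition (which a priori depend on $\om$) assemble into a single product-null set, so that the conclusion really is "for a.e.\ $t\ge0$, $\P$-a.s.", and that $t\mapsto C_\Lambda(\T*\zeta)(t)$ is $\F$-adapted as an element of $L^2_{\F}(0,\al;\mathscr{Y})$. This is handled by the joint-measurability argument of the previous paragraph combined with Fubini's theorem applied to the indicator of the bad set. Everything else — the norm estimate itself — is just taking expectations in the deterministic inequality, with no further analytic input required; in particular the constant is the same $\al\ga^2$ with $\ga=\ga(\al)$ the deterministic admissibility constant of $C$ from \eqref{Obser-Estim0}.
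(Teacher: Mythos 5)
Your proposal is correct and is essentially the paper's own argument: the paper proves this proposition by simply invoking the deterministic convolution estimate of \cite[Proposition 3.3]{Hadd-SF}, which amounts to exactly your pathwise application of that result followed by taking expectations. The measurability/Fubini bookkeeping you add (countable limit along $\la\in\N$ for $C_\Lambda$, joint measurability, assembling the exceptional sets) is the content the paper leaves implicit in ``follows immediately,'' so there is no divergence in approach.
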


Assume now that $B\in\calL(U,H_{-1})$, $C\in\calL(D(A),\mathscr{Y})$ and consider the deterministic input-output linear system
\begin{align*}
(A,B,C)\quad\begin{cases}
\dot{x}(t)=Ax(t)+Bu(t),& t>0,\quad x(0)=x_0,\cr y(t)=Cx(t),& t>0.
\end{cases}
\end{align*}
Remark that if $B$ is admissible for $A,$ then the system $(A,B,C)$ has a unique mild solution $x(\cdot)\in \calC([0,+\infty),H)$ such that $x(t)=T(t)x_0+\Phi_t u$ for any $t\ge 0$, $x_0\in H$ and $u\in L^2([0,+\infty),U)$. The main question is how to deal with the output function $t\mapsto y(t)$. In systems theory, we require $y(\cdot)$ to be an $L^2$-function in order to investigate the feedback theory, the stabilization and the linear quadratic problem of a linear system. As $C$ is only acting on $D(A)$ and $x(t)\in H$ for admissible control operators, then the expression "$CX(t)$" is not well-defined  even if the observation operator $C$ is admissible for $A$. The main reason for this obstacle is that the term "$C\Phi_t u$" is not well-defined. This leads to the concept of well-posed linear systems developed first by Salamon \cite{Sala} and latter by Staffans \cite{Staf}, and Weiss \cite{WeiRegu}. In order to define this notion, we denote $(\S_\t u)(t)=u(t+\t)$ for $t,\t\ge 0$ and $u\in L^2([0,+\infty),V)$ for $V=U,Y$ (for simplicity we use the same notation).
\begin{definition}\label{D-wp}
Let $B\in\calL(U,H_{-1})$, $C\in\calL(D(A),Y)$. The system $(A,B,C)$ is well-posed on $X,U,Y$ if $B$ is an admissible control operator for $A,$ $C$ is an admissible observation operator for $A,$ and there exists a linear bounded operator $\mathscr{F}:L^2([0,\al],U)\to L^2([0,\al],\mathscr{Y})$ for $\al>0$ with
\begin{align*}
\S_\t \mathscr{F} u =\Psi \Phi_\t u_{|[0,\t]}+\mathscr{F}\S_\t u,\quad \t>0,\quad u\in L^2_{loc}([0,+\infty),U),
\end{align*}
such that the output function is extended to a function of the form
\begin{align*}
y=\Psi x_0+\mathscr{F}u\quad a.e.\;\text{on}\;(0,+\infty)
\end{align*}
for any $x_0\in H$ and $u\in L^2_{loc}([0,+\infty),U)$.
\end{definition}
An important subclass of well-posed system is introduced in \cite{WeiRegu} and defined as:
\begin{definition}\label{Reg-syst}
A well-posed triple $(A,B,C)$ is called regular (with feedthrough zero) if the following limit
               \begin{align*}
               \lim_{\t\to 0^+}\frac{1}{\t}\int^{\t}_0 \left( \mathscr{F}(\1_{\R^+}(\cdot) v)\right)(s)ds=0
               \end{align*}
               exists for a constant control $v\in U$.
\end{definition}
The following result gives a representation of the output functions of regular linear systems, where the proof can be found in \cite{WeiRegu}.
\begin{theorem}\label{Weiss-repres}
Assume that the triple $(A,B,C)$ is regular. Then $\Phi_t u\in D(C_\Lambda)$  and $(\mathscr{F}u)(t)=C_\Lambda \Phi_t u$
for almost every $t\ge 0,$ and $u\in L^2_{loc}([0,+\infty),U)$. In particular,
the state and the output functions of the system $(A,B,C)$
satisfy $x(t;x_0,u)\in D(C_\Lambda)$ and $y(t;x_0,u)=C_\Lambda x(t)$ for any $x_0\in H,$ $u\in L^2_{loc}([0,+\infty),U)$ and
almost every $t> 0$.
\end{theorem}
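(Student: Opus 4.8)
This is Weiss's representation theorem for regular linear systems \cite{WeiRegu}, and the argument I would follow is essentially his. First, the ``in particular'' part reduces to the first assertion: since the mild solution is $x(t;x_0,u)=T(t)x_0+\Phi_t u$ and admissibility of $C$ for $A$ already yields $T(t)x_0\in D(C_\La)$ for a.e.\ $t$ with $(\Psi x_0)(t)=C_\La T(t)x_0$, once we know $\Phi_t u\in D(C_\La)$ and $(\mathscr{F}u)(t)=C_\La\Phi_t u$ a.e.\ we get $x(t)\in D(C_\La)$ and $y(t)=(\Psi x_0)(t)+(\mathscr{F}u)(t)=C_\La x(t)$ by additivity. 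So everything is about the convolution term. Two preliminary facts will be used. (i) For admissible $C$ and any $x\in H$, $CR(\la,A)x=\widehat{\Psi x}(\la)$ for $\operatorname{Re}\la$ large: for $x\in D(A)$ this follows by pulling $C$ through the (then $D(A)$-valued, hence $C$-continuous) Laplace integral of $T(\cdot)x$, and it extends to all of $H$ by density, both sides depending continuously on $x$; this is the legitimate substitute for the forbidden interchange of $C$ with an integral. (ii) The standard Abelian theorem: if $g\in L^2_{loc}([0,+\infty),\mathscr{Y})$ has at most exponential growth and $\tfrac1\tau\int_0^\tau g(r)\,dr\to L$ as $\tau\to0^+$, then $\la\widehat{g}(\la)\to L$ as $\la\to+\infty$.

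Fix $t_0>0$ that is a Lebesgue point (in the $L^2$ sense) both of $u$ and of $\mathscr{F}u$, and set $v_0:=u(t_0)$, $x:=\Phi_{t_0}u\in H$. The intertwining relation of Definition \ref{D-wp} applied with shift $t_0$ gives, for a.e.\ $r\ge0$,
\begin{align*}
(\mathscr{F}u)(t_0+r)=(\Psi\Phi_{t_0}u)(r)+\bigl(\mathscr{F}(u(t_0+\cdot))\bigr)(r)=C_\La T(r)x+\bigl(\mathscr{F}(u(t_0+\cdot))\bigr)(r).
\end{align*}
Averaging over $r\in[0,\ep]$ and letting $\ep\to0^+$: the left-hand side tends to $(\mathscr{F}u)(t_0)$; writing $u(t_0+\cdot)=\1_{\R^+}(\cdot)v_0+\widetilde w$ with $\bigl(\tfrac1\ep\int_0^\ep\|\widetilde w(r)\|^2dr\bigr)^{1/2}\to0$ (Lebesgue point) and using boundedness of $\mathscr{F}$ on $L^2$, the term $\tfrac1\ep\int_0^\ep(\mathscr{F}\widetilde w)(r)\,dr\to0$, while $\tfrac1\ep\int_0^\ep(\mathscr{F}(\1_{\R^+}v_0))(r)\,dr\to0$ by Definition \ref{Reg-syst} — this is where zero feedthrough is used, and used essentially. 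Hence
\begin{align*}
\frac1\ep\int_0^\ep C_\La T(r)x\,dr=\frac1\ep\int_0^\ep(\Psi x)(r)\,dr\longrightarrow(\mathscr{F}u)(t_0)\qquad(\ep\to0^+).
\end{align*}

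Now apply fact (ii) to $g=\Psi x\in L^2_{loc}$ (of exponential growth, since $x\in H$ and $\Psi$ is admissible): $\la\,\widehat{\Psi x}(\la)\to(\mathscr{F}u)(t_0)$ as $\la\to+\infty$. By fact (i), $\widehat{\Psi x}(\la)=CR(\la,A)x$, so $\la CR(\la,A)\Phi_{t_0}u\to(\mathscr{F}u)(t_0)$, which by the definition \eqref{Yosida-extension-def} of the Yosida extension means exactly $\Phi_{t_0}u\in D(C_\La)$ with $C_\La\Phi_{t_0}u=(\mathscr{F}u)(t_0)$. Since this holds for a.e.\ $t_0>0$, the first assertion is proved, and the ``in particular'' statement follows as explained.

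I expect the part requiring the most care to be the bookkeeping in the second paragraph: the correct decomposition of $\mathscr{F}u$ on $[t_0,+\infty)$ via the intertwining relation, the use of $L^2$ Lebesgue points (not merely $L^1$ ones) to dispose of the $\widetilde w$-term, and — above all — the recognition that Definition \ref{Reg-syst} is precisely what converts the Ces\`aro average of the free output $\Psi\Phi_{t_0}u$ into the genuine output value; without regularity that limit would be $(\mathscr{F}u)(t_0)-Du(t_0)$ with $D\neq0$ the feedthrough, and the pointwise identity $(\mathscr{F}u)(t)=C_\La\Phi_t u$ would break down. The enabling technical device throughout is fact (i): that an admissible observation operator $C$ may be commuted with a Laplace transform even though $C$ is neither closed nor closeable on $H$.
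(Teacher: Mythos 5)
Your argument is correct and is essentially the proof of Weiss that the paper itself relies on: Theorem \ref{Weiss-repres} is not reproved here, the authors simply cite \cite{WeiRegu}, and your reconstruction (composition property at an $L^2$-Lebesgue point, the zero-feedthrough condition of Definition \ref{Reg-syst} to kill the constant-input term, then the Abelian step $\la\widehat{\Psi x}(\la)\to$ Ces\`aro limit combined with $\widehat{\Psi x}(\la)=CR(\la,A)x$ to land in $D(C_\La)$) is exactly the standard route. The only point worth flagging is that Weiss states the representation with the Lebesgue extension, so your facts (i)--(ii) are precisely the extra step showing that the Ces\`aro limit also gives membership in the Yosida extension's domain, which is the form used in this paper.
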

\begin{remark}\label{nice-rem}
According to \cite{WeiTrans}, in the Hilbert setting, the regularity of the triple $(A,B,C)$ is  equivalent to ${\rm Range} (R(\la,A_{-1})B)\subset D(C_\Lambda)$ for some (hence all) $\la\in\rho(A)$. In this case, the transfer function of the system $(A,B,C)$ is defined by $\mathbb{G}(\la)=C_{\La}R(\la,A_{-1})B$ for $\la\in\rho(A),$ and satisfies $\mathbb{G}(\la)\to 0$ as $\la\to+\infty$.
\end{remark}

\subsection{A semigroup approach to the well-posed stochastic linear systems}\label{Stoch-WP-S}
The well-posedness of the stochastic system \eqref{ABC-uy} is already considered in \cite{Lu-SIAM-15}. Here in this article, we give another take on well-posedness using a semigroup approach and Yosida extensions. We adopt the following definition (see also \cite[Definition 1.1]{Lu-SIAM-15}).
\begin{definition}
A process $X(\cdot)$ is said to be a mild solution of the stochastic linear system \eqref{ABC-uy} if
\begin{enumerate}
\item $X(t)\in H,$ $\P$ almost surely,
\item $X(\cdot)\in \calC_{\F}(0,+\infty;L^2(\Om,H))$,
  \item for any $t\ge 0$ and $\xi\in L^2_{\calF_0}(\Om,H)$,
\end{enumerate}
\begin{equation}\label{B-mild-sol}X(t)=T(t)\xi+\int^t_0 T_{-1}(t-s)Bu(s)s+\int^t_0 T(t-s)\mathscr{M}(X(s))dW(s).\end{equation}
\end{definition}
In the sequel, we need the following maps: for any $t\ge 0,$
\begin{align}\label{Operator-map}
\tilde{\Phi}_t: L^2_{\F}(0,+\infty;U)\to L^2_{\calF_t}(\Om,H_{-1}),\quad \tilde{\Phi}_{t}u:=\int_{0}^{t}T_{-1}(t-s){B}u(s,.)ds.
\end{align}
It is clear that if the operator $B$ is admissible for $A,$
\begin{align}\label{yaya}
\tilde{\Phi}_t\in \calL\left(L^2_{\F}(0,+\infty;U),L^2_{\calF_t}(\Om,H)\right),\quad \forall t\ge 0.
\end{align}

The first main result of this section is the following:
\begin{theorem}\label{thm-existence}
Assume that $B$ is an admissible control operator for $A$. For every $\xi\in L^{2}_{\mathcal{F}_{0}}(\Omega,H)$
and $u\in L^2_{\F}(0,+\infty;U)$, there exists a unique mild solution $X(\cdot;\xi,u)\in \calC_{\F}(0,+\infty;L^2(\Om,H))$ of the stochastic system \eqref{ABC-uy} satisfying
\begin{align}\label{reformulation}
X(t;\xi,u)=T(t)\xi+\int_{0}^{t}T(t-s)\mathscr{M}(X(s,\xi,0))dW(s)+\Phi^{W}_{t}u,\qquad t\ge 0,
\end{align}
where
\begin{align}\label{stoch-maps}
\begin{split}
&\Phi^{W}_{t}u=\tilde{\Phi}_{t}u+\int_{0}^{t}T(t-s)\mathscr{M}(\Phi^{W}_{s}u)dW(s),\quad and\cr
&\Phi^{W}_{t}\in \calL\left(L^2_{\F}(0,+\infty;U),L^{2}_{\mathcal{F}_{t}}(\Omega;H)\right),\qquad t\ge 0.
\end{split}
\end{align}
\end{theorem}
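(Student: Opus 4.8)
The plan is to reformulate the stochastic system \eqref{ABC-uy} as a fixed point problem on the Banach space $\calC_{\F}(0,T;L^2(\Om,H))$ for an arbitrary finite horizon $T>0$, and then patch solutions together to obtain a solution on $[0,+\infty)$. First I would decompose the problem by linearity: since $\mathscr{M}$ is linear and bounded on $H$, I expect the mild solution to split as $X(t;\xi,u)=X(t;\xi,0)+X(t;0,u)$, where $X(\cdot;\xi,0)$ solves the uncontrolled equation $dX=AX\,dt+\mathscr{M}(X)\,dW$ with $X(0)=\xi$, and $X(\cdot;0,u)$ is precisely the process I will call $\Phi^W_t u$. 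So the heart of the matter is establishing (i) existence and uniqueness of the mild solution to the linear stochastic Cauchy problem $dX=AX\,dt+\mathscr{M}(X)\,dW$, $X(0)=\xi$, in $\calC_{\F}(0,T;L^2(\Om,H))$, and (ii) the well-definedness of $\Phi^W_t u$ together with the boundedness claim in \eqref{stoch-maps}.

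For part (i), I would set up the standard contraction: define the map $(\Gamma Z)(t)=T(t)\xi+\int_0^t T(t-s)\mathscr{M}(Z(s))\,dW(s)$ on $\calC_{\F}(0,T;L^2(\Om,H))$. Using the It\^o isometry, the boundedness of $\mathscr{M}$, and the uniform bound $\|T(t)\|\le M e^{\omega t}$ on $[0,T]$, one gets $\E\|(\Gamma Z_1)(t)-(\Gamma Z_2)(t)\|^2 \le M^2 e^{2\omega T}\|\mathscr{M}\|^2 \int_0^t \E\|Z_1(s)-Z_2(s)\|^2\,ds$; iterating this (or using an equivalent exponentially weighted norm on $\calC_{\F}$) yields a contraction, hence a unique fixed point. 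Continuity of $t\mapsto (\E\|X(t)\|^2)^{1/2}$ follows from the strong continuity of $\T$ and dominated convergence applied to the It\^o term, so $X(\cdot;\xi,0)\in\calC_{\F}(0,T;L^2(\Om,H))$; adaptedness is inherited through the iteration.

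For part (ii), I would run the same fixed point argument with data $\tilde\Phi_t u$ in place of $T(t)\xi$: define $(\Gamma_u Z)(t)=\tilde\Phi_t u+\int_0^t T(t-s)\mathscr{M}(Z(s))\,dW(s)$. Here the key input is \eqref{yaya}, i.e. the deterministic admissibility of $B$ forces $\tilde\Phi_t u\in L^2_{\calF_t}(\Om,H)$ with $\|\tilde\Phi_t u\|_{L^2(\Om,H)}\le \|\Phi_t\|\,\|u\|_{L^2_{\F}(0,t;U)}$; I also need $t\mapsto \tilde\Phi_t u$ to be continuous into $L^2(\Om,H)$, which follows from strong continuity of $\T_{-1}$ on $H$ combined with admissibility (this is standard in the deterministic theory and transfers verbatim since $u$ is only $\F$-adapted, not random in any essential way for this estimate). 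The same contraction then produces a unique $\Phi^W_\cdot u\in\calC_{\F}(0,T;L^2(\Om,H))$ satisfying the first line of \eqref{stoch-maps}. Boundedness of $\Phi^W_t:L^2_{\F}(0,+\infty;U)\to L^2_{\calF_t}(\Om,H)$ comes from linearity of $u\mapsto\Phi^W_t u$ (clear from uniqueness) plus the a priori estimate: applying the It\^o isometry to the fixed point identity gives $\E\|\Phi^W_t u\|^2 \le 2\|\Phi_t\|^2\|u\|^2_{L^2_{\F}(0,t;U)} + 2M^2e^{2\omega T}\|\mathscr{M}\|^2\int_0^t \E\|\Phi^W_s u\|^2\,ds$, and Gr\"onwall closes it. Finally I would check that $X:=X(\cdot;\xi,0)+\Phi^W_\cdot u$ indeed satisfies the mild formulation \eqref{B-mild-sol}: adding the two defining identities and recombining the stochastic integrals (valid since $\mathscr{M}$ is linear) gives exactly \eqref{B-mild-sol}, while \eqref{reformulation} is just the decomposition itself; uniqueness of the mild solution of \eqref{ABC-uy} follows from uniqueness in each of the two subproblems. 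The extension from $[0,T]$ to $[0,+\infty)$ is routine since $T$ was arbitrary and solutions on overlapping intervals agree by uniqueness.

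The main obstacle I anticipate is not the contraction itself but making rigorous that the stochastic convolution $\int_0^t T(t-s)\mathscr{M}(Z(s))\,dW(s)$ is well-defined and $\calF_t$-measurable with values in $H$ (not merely $H_{-1}$) when $Z$ ranges over $\calC_{\F}$ — this is fine because $\mathscr{M}\in\calL(H)$, so the integrand is genuinely $H$-valued and the classical Da Prato--Zabczyk theory applies directly. A secondary subtlety is the continuity in $L^2(\Om,H)$ of $t\mapsto\tilde\Phi_t u$; one handles it exactly as in the deterministic case (Tucsnak--Weiss, Chap.~4), using that $\Phi_{t+h}u-\Phi_t u$ splits into a piece controlled by strong continuity of the semigroup and a piece that is a shifted control map, both small.
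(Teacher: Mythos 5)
Your proposal is correct, but it takes a genuinely different route from the paper. You construct the solution directly: split $X(\cdot;\xi,u)=X(\cdot;\xi,0)+\Phi^{W}_{\cdot}u$, build each piece by a contraction mapping in $\calC_{\F}(0,T;L^2(\Om,H))$ (using the It\^{o} isometry, $\mathscr{M}\in\calL(H)$, and --- crucially --- the deterministic admissibility of $B$ to guarantee $\tilde{\Phi}_t u\in L^{2}_{\calF_t}(\Om,H)$ and the continuity of $t\mapsto\tilde{\Phi}_t u$ in $L^2(\Om,H)$), then obtain the bound in \eqref{stoch-maps} from It\^{o}'s isometry plus Gronwall, and the linearity of $\Phi^{W}_t$ from uniqueness of the fixed point. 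This is essentially the ``direct approach plus fixed point'' of \cite[Theorem 2.1]{Lu-SIAM-15}, from which the paper deliberately departs: the paper instead absorbs the control into the state via the Lax--Phillips semigroup \eqref{lax-sem} on $\mathfrak{X}=H\times L^2([0,+\infty),U)$, so that the controlled equation becomes the uncontrolled stochastic Cauchy problem \eqref{abstract-forme}, whose well-posedness is quoted from the classical theory \cite[Chap.~5]{Da-Za-Book-96}; $X(\cdot)$ is recovered as the first component of $\varrho(\cdot)$, and only afterwards are the estimate \eqref{phi1} and the linearity of $\Phi^{W}_t$ verified (the latter by a Gronwall argument rather than fixed-point uniqueness, with the same content). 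Your route buys self-containedness and avoids checking that $\mathfrak{A}$ generates the product-space semigroup; the paper's route outsources the existence step entirely to standard theory and, more importantly, sets up machinery that is reused later (e.g., in the proof of Proposition \ref{BP}). If you write your version up, make explicit the two points you flag only in passing: the adaptedness of $\tilde{\Phi}_t u$ (a pathwise Bochner integral of an adapted integrand) and the uniform bound $\sup_{t\le T}\|\Phi_t\|<\infty$ underlying the dominated-convergence argument for the $L^2(\Om,H)$-continuity of $t\mapsto\tilde{\Phi}_t u$; both are standard, as you indicate, and with them your argument is complete.
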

\begin{proof}
Let $t_0>0$ be arbitrary and define the following spaces
\begin{align*}
&\mathfrak{X}:=H\times L^2([0,+\infty),U).
\end{align*}
Consider the matrix operator
\begin{align}\label{lax-gen}
\begin{split}\mathfrak{A}&:=\begin{pmatrix}
A_{-1} & B\delta_0\\
0 & \frac{d}{ds}
\end{pmatrix},\cr D(\mathfrak{A})&:=\left\{\binom{x}{g}\in H\times W^{1,2}([0,+\infty),U):A_{-1}x+Bg(0)\in H \right\}.
\end{split}
\end{align}
Clearly, $(\mathfrak{A},D(\mathfrak{A}))$ generates a $C_0$--semigroup $\mathfrak{T}:=(\mathfrak{T}(t))_{t\ge 0}$ on $\mathfrak{X}$ such that
\begin{align}\label{lax-sem}\mathfrak{T}(t)=\begin{pmatrix}
T(t) & \Xi(t)\\
0 & S_U(t)
\end{pmatrix},\quad t\ge0,
\end{align}
where \begin{align*}\Xi(t)u:=\int_{0}^{t}T_{-1}(t-s)B\delta_{0}S_{U}(t)uds,\qquad t\ge 0. \end{align*}
Here $(S_U(t))_{t\ge0}$ is the right shift semigroup on $L^2([0,+\infty),U).$ Introducing the state
\begin{equation*}
  \rho(t)=\binom{X(t)}{S_U(t)u},\quad t\geq0.
\end{equation*}
The problem \eqref{ABC-uy} becomes
\begin{align}\label{abstract-forme}
\begin{cases}
		d{\varrho}(t)=\mathfrak{A}\rho(t)dt+\mathscr{B}(\rho(t))dW(t), & t>0, \\
		\varrho(0)=\binom{\xi}{u},
	\end{cases}
\end{align}
where $\mathscr{B}(\varrho(t))=\binom{\mathscr{M}(X(t))}{0}$. It is well-known (see e.g. \cite[Chap.5]{Da-Za-Book-96})
that the system \eqref{abstract-forme} has a unique mild solution $\varrho\in \calC_{\F}(0,+\infty;L^2(\Om,\mathfrak{X}))$.
Thus the first projection of $\varrho(t)$ satisfies $X(\cdot)\in \calC_{\F}(0,+\infty;L^2(\Om,H))$ and \eqref{B-mild-sol}.
This proves the first part of the theorem. Now select $\Phi^{W}_t u:=X(t,0,u)$ for $t\ge 0$ and
$u\in L^2_{\F}(0,+\infty;U)$. Thus by taking $\xi=0$ in the equation \eqref{B-mild-sol},
we obtain the first equation in \eqref{stoch-maps}. On the other hand, let $\t>0$ be arbitrary,
$\delta>\om_0(A)$ and  $M\ge 1$ such that $\|T(t)\|\le M e^{\delta t}$ for any $t\ge 0$. Let $t\in [0,\t]$ and $u\in L^2_{\F}(0,\t;U)$. Using \eqref{yaya} and It\^{o}'s isometry, we estimate
\begin{align*}
\mathbb{E}\|\Phi^{W}_{t}u\|^{2}_{H}
&\le 2 c \|u\|^{2}_{L^{2}_{\F}(0,\t;U)}+ 2 \mathbb{E}\left\|\int_{0}^{t}T(t-s)\mathscr{M}(\Phi^{W}_{s}u)dW(s)\right\|^{2}_{H}\\
&\leq 2 c \|u\|^{2}_{L^{2}_{\F}(0,\t;U)}+ 2 (M\|\mathscr{M}\|)^2 e^{2|\delta|\t} \int_{0}^{t}\E\|\Phi^{W}_{s}u\|^2 ds,
\end{align*}
for a constant $c:=c(\t)>0$. Now by applying  Gronwall's inequality and exploiting the continuity of trajectories,  we have
\begin{align}\label{phi1}\mathbb{E}\|\Phi^{W}_{\t}u\|^{2}_{H}\leq \tilde{\gamma}\, \|u\|^{2}_{L^{2}_{\F}(0,\t;U)}
,\end{align}
for a constant $\tilde{\gamma}:=\tilde{\gamma}(\t)>0$. Let us now prove the linearity of each $\Phi^{W}_{\t}$.
In fact, from the first part of the proof, for each $u\in L^2_{\F}(0,+\infty;U),$
the process $X^{u}(t)=\Phi^{W}_{t}u$ is the unique mild solution of the following stochastic equation
\begin{align}\label{phistocu-0}
d{X}^{u}(t)=\left(AX^{u}(t)+Bu(t)\right)dt+\mathscr{M}(X^{u}(t))d{W}(t),\quad X^{u}(0)=0,\quad  t\ge 0.
\end{align}
Let $u_1,u_2\in L^2_{\F}(0,+\infty;U),$ and $\la\in\C$. By uniqueness of the solution of \eqref{phistocu-0},
it suffices to show that $X^{u_1+\lambda u_2}= X^{u_1}+X^{\lambda u_2}$. This can also reduced to prove that $X^{\la u_2}=\la X^{u_2}$.
In fact, as each $\tilde{\Phi}_t$ is linear, by the expression of $\Phi^{W}_t$ in \eqref{stoch-maps}, It\^{o}'s Isometry and Fubini's Theorem, we obtain
\begin{align*}
  \mathbb{E}\|X^{\lambda u_2}(t)-\lambda X^{u_2}(t)\|^{2}_{H} &= \E\left\|\int^t_0 T(t-s)\mathscr{M}\left(X^{\la u_2}(s)-\la X^{ u_2}(s)\right)dW(s)\right\|\cr & \le (M\|\mathscr{M}\|)^2 e^{2|\delta|t} \int^t_0 \mathbb{E}\|X^{\lambda u_2}(s)-\lambda X^{u_2}(s)\|^{2}_{H} ds.
\end{align*}
Using Gronwall's inequality, we obtain
\begin{align*}
  X^{\lambda v}(t)=\lambda X^{v}(t),\quad t\geq0.
\end{align*}
Moreover, the formula in \eqref{reformulation} follows once we remark that
\begin{align*}X(t,\xi,u)=X(t,\xi,0)+X(t,0,u),\end{align*}
where \begin{align}\label{walid-hadd}
        X(t,\xi,0)=T(t)\xi+\int_{0}^{t}T(t-s)\mathscr{M}(X(s,\xi,0))dW(s),
      \end{align}
for $t\in[0,\t]$ and $\xi\in L^{2}_{\calF_0}(\Om,H)$. This ends the proof.
\end{proof}
\begin{remark}
The existence and uniqueness of the mild solution of the system \eqref{ABC-uy} is proved in \cite[Theorem 2.1]{Lu-SIAM-15}
using a direct approach and also a fixed point theorem. On the other hand, a detailed study of the existence and regularity of mild solution of the system \eqref{ABC-uy} in the parabolic case is given in \cite{flandoli1}. In the present paper, we used a completely different approach based on the transformation
of the control system \eqref{ABC-uy} into a standard stochastic Cauchy problem using the Lax-Phillips semigroup \eqref{lax-sem}.
Furthermore, we rewritten the mild solution $X(\cdot,\xi,u)$ as $X(t,\xi,u)=Z(t,\xi)+\Phi^{W}_t u$,
where  $Z(t,\xi)=X(t,\xi,0)$ is the unique solution of the stochastic Cauchy problem
\begin{align*}
dZ(t)=AZ(t)dt+\mathscr{M}(Z(t))dW(t),\qquad t>0,\quad Z(0)=\xi.
\end{align*}
\end{remark}
\begin{definition}
Assume that the control operator $B$ is admissible for $A$ and let $\t>0$. The system \eqref{ABC-uy} is called $\t$-exactly controllable (or exactly controllable on $[0,\t]$) if for any $\xi\in L^{2}_{\calF_0}(\Om,H)$ and $\zeta\in L^{2}_{\calF_\t}(\Om,H),$ there exists a control $u\in L^{2}_{\F}(0,\t;U)$ such that the corresponding solution $X(.)$ satisfies $X(\t)=\zeta$.
\end{definition}
\begin{remark}\label{exact-controlla-stoch}
According to the equation \eqref{stoch-maps},
to prove the $\t$-exact controllability of the system \eqref{ABC-uy} it suffices to show that
\begin{equation}\label{stoch-exact-condition}
  {\rm Range}(\Phi^W_{\tau})=L^{2}_{\calF_{\t}}(\Omega,H).
\end{equation}
In fact, if $\zeta\in L^{2}_{\calF_\t}(\Om,H),$ then we also have
\begin{align*}
\ell:=\zeta-T(\t)\xi-\int^{\t}_0 T(\t-s)\mathscr{M}(X(s,\xi,0))dW(s)\in L^{2}_{\calF_{\t}}(\Omega,H).
\end{align*}
Now if the condition \eqref{stoch-exact-condition} holds, then there exists $u\in L^2_{\F}(0,\t;U)$ such that
$
\Phi^{W}_{\tau}u=\ell.
$
This means that $X(\t;\xi,u)=\zeta$.
\end{remark}
\begin{remark}\label{result-exact-contro}
 %The relationship between exact controllability of deterministic systems and stochastic systems is studied in \cite{Nazim2001} in the case of bounded control operators. In fact,
{\rm (i)} It is shown in \cite[Theorem 3.2]{Nazim2001} that a stochastic system with bounded control operator is exactly controllable on $[0,\t]$ if and only if the corresponding deterministic system is exactly controllable in $[s,\t]$ for any $s\in [0,\t)$. However, when the control operators are unbounded it is not clear how to extend the approach used in \cite{Nazim2001}. \\
 {\rm (ii)} Let the assumptions of Theorem \ref{thm-existence} be satisfied. Moreover, we assume that the map $\tilde{\Phi}_\t:L^2_{\F}(0,\t,U)\to L^2_{\mathcal{F}_\t}(\Omega,H)$ is surjective. Now  for $\zeta\in L^{2}_{\calF_{\t}}(\Omega,H)$ we define
\begin{align*}
g:=\zeta-\int_{0}^{\t}T(\t-s)\mathscr{M}(\zeta)dW(s)\in L^{2}_{\calF_{\t}}(\Omega,H).
\end{align*}  Then there exists
$u\in L^2_{\F}(0,\t;U)$ such that
 $\tilde{\Phi}_\t u=g$. Thus
\begin{align*}
\Phi^{W}_{\tau}u-\zeta=\int_{0}^{\t}T(\t-s)\mathscr{M}(\Phi^{W}_{s}u-\zeta)dW(s).
\end{align*}
Using It\^{o}'s isometry and Gronwall's inequality , we deduce $\E\|\Phi^{W}_{\tau}u-\zeta\|^{2}_{H}=0$.
This implies that $\Phi^{W}_{\tau}u=\zeta$. By remark \ref{exact-controlla-stoch}, the system \eqref{ABC-uy} is $\t$-exactly controllable.\\ (iii) The exact controllability of the parabolic and hyperbolic stochastic equations has recently been studied in \cite{Lu-Zhang-21}. The authors offered examples of the lack of exact controllability that can arise. On the other hand, the authors proved the exact controllability of stochastic transport equations and some typical parabolic equations.
\end{remark}

In the rest of this section we focus on the properties of the output process $Y(\cdot)$ of the system \eqref{ABC-uy}. By analogy to the deterministic well-posed systems, we adopt the following definition.
\begin{definition}\label{Def-Well-posed-ABCW}
 We say that the stochastic system \eqref{ABC-uy} is well-posed if there exists
 an extension $\tilde{C}:D(\tilde{C})\subset H\to \mathscr{Y}$ of the operator $C$ such that for any $\xi\in L^2_{\calF_0}(\Om,H)$ and $u\in L^2_{\F}(0,\al;U)$
 ($\al>0$), the mild solution  satisfies
 $X(t;\xi,u)\in D(\tilde{C})$ for almost every $t> 0$ and $\P$-a.s. and
\begin{align*}
\|\tilde{C}X(\cdot)\|_{L^2_{\F}(0,\al;\mathscr{Y})}\le c \left(\|\xi\|_{L^2_{\calF_{0}}(\Om,H)}+\|u\|_{L^2_{\F}(0,\al;U)}\right)
\end{align*} where $c:=c_\alpha>0$ is a constant.
\end{definition}
To investigate the well-posedness of the system \eqref{ABC-uy}, we first prove the following results on stochastic convolution.
\begin{proposition}\label{fondamental-lemma}
Assume that $C$ is an admissible observation operator for $A$. For  any $\al>0$ and $\zeta\in L^2_{\F}(0,\al;H)$, we have
\begin{align*}
& (T\diamond \zeta)(t):=\int_{0}^{t}T(t-s)\zeta(s)dW(s)\in D(C_{\Lambda}),\quad a.e.\, t\ge 0,\quad \mathbb{P}-a.s, \quad\text{and,}\cr & \mathbb{E}\int_{0}^{\al}\|C_{\Lambda}(T\diamond \zeta)(t)\|^{2}_{\mathscr{Y}} dt\leq \ga^2\mathbb{E}\int_{0}^{\al}\|\zeta(s)\|^{2}_{H}ds\end{align*}
for some constant $\ga:=\ga(\al)>0$ independent of $\zeta$.
\end{proposition}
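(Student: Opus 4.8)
The plan is to regularise the (non-closable) operator $C$ by the bounded operators $\la CR(\la,A)=C_\Lambda\la R(\la,A)$ and to pass to the limit $\la\to+\infty$, the extended output map $\Psi$ of $(A,C)$ playing the central role. Recall that, $C$ being admissible for $A$, $\Psi$ extends to a bounded map $H\to L^2([0,\al],\mathscr{Y})$ with $\|\Psi x\|_{L^2([0,\al],\mathscr{Y})}\le\ga\|x\|_H$ and $(\Psi x)(r)=C_\Lambda T(r)x$ for a.e.\ $r$, for every $x\in H$. The natural candidate for $C_\Lambda(\T\diamond\zeta)(\cdot)$ is then the stochastic convolution $\chi(t):=\int_0^t(\Psi\zeta(s))(t-s)\,dW(s)$. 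The first task is to make this meaningful: after choosing a jointly measurable version of $(s,r,\om)\mapsto(\Psi\zeta(s,\om))(r)$ (legitimate since $\Psi\zeta(\cdot)\in L^2_{\F}(0,\al;L^2([0,\al],\mathscr{Y}))$ and $L^2([0,\al],\mathscr{Y})$ is separable) and observing that $\Psi$ does not shift the time variable, so that $s\mapsto(\Psi\zeta(s))(t-s)$ is $\F$-adapted, Fubini's theorem gives
\[
\int_0^\al\E\int_0^t\|(\Psi\zeta(s))(t-s)\|_{\mathscr{Y}}^2\,ds\,dt
=\E\int_0^\al\int_0^{\al-s}\|(\Psi\zeta(s))(r)\|_{\mathscr{Y}}^2\,dr\,ds
\le\ga^2\,\E\int_0^\al\|\zeta(s)\|_H^2\,ds,
\]
so that $\chi(t)$ is a well-defined element of $L^2(\Om,\mathscr{Y})$ for a.e.\ $t$, $\chi\in L^2_{\F}(0,\al;\mathscr{Y})$, and, by It\^o's isometry, $\chi$ already satisfies the announced bound $\E\int_0^\al\|\chi(t)\|_{\mathscr{Y}}^2dt\le\ga^2\,\E\int_0^\al\|\zeta(s)\|_H^2ds$.

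It then remains to show that $(\T\diamond\zeta)(t)\in D(C_\Lambda)$ with $C_\Lambda(\T\diamond\zeta)(t)=\chi(t)$ for a.e.\ $t$, $\P$-a.s. Since $\la R(\la,A)$ is bounded it commutes with $T(\cdot)$ and with the It\^o integral, hence $\la R(\la,A)(\T\diamond\zeta)(t)=\int_0^tT(t-s)\la R(\la,A)\zeta(s)\,dW(s)$; because $\la R(\la,A)\zeta(s)\in D(A)$ and $r\mapsto\|T(r)\|_{\calL(D(A))}$ is locally bounded, the $D(A)$-valued process $s\mapsto T(t-s)\la R(\la,A)\zeta(s)$ lies, for each fixed $\la$, in $L^2_{\F}(0,t;D(A))$, so the bounded operator $C\colon D(A)\to\mathscr{Y}$ commutes with this stochastic integral and, using $CT(r)y=(\Psi y)(r)$ for $y\in D(A)$,
\[
C\la R(\la,A)(\T\diamond\zeta)(t)=\int_0^tCT(t-s)\la R(\la,A)\zeta(s)\,dW(s)=\int_0^t(\Psi_\la\zeta(s))(t-s)\,dW(s),\qquad \Psi_\la:=\Psi\circ\la R(\la,A).
\]
From $\|(\Psi_\la-\Psi)x\|_{L^2([0,\al],\mathscr{Y})}\le\ga\|\la R(\la,A)x-x\|_H$, the strong convergence $\la R(\la,A)\to I$, the uniform bound $\sup_{\la\ge\la_0}\|\la R(\la,A)\|<\infty$ (for $\la_0$ large), and dominated convergence, one obtains $\E\int_0^\al\|(\Psi_\la-\Psi)\zeta(s)\|_{L^2([0,\al],\mathscr{Y})}^2ds\to0$; combining this with It\^o's isometry and the Fubini computation above yields
\[
\E\int_0^\al\bigl\|C\la R(\la,A)(\T\diamond\zeta)(t)-\chi(t)\bigr\|_{\mathscr{Y}}^2\,dt\le\E\int_0^\al\|(\Psi_\la-\Psi)\zeta(s)\|_{L^2([0,\al],\mathscr{Y})}^2\,ds,
\]
whose right-hand side tends to $0$ as $\la\to+\infty$.

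Finally, extracting a sequence $\la_n\to+\infty$ along which $C\la_nR(\la_n,A)(\T\diamond\zeta)(t,\om)\to\chi(t,\om)$ for a.e.\ $(t,\om)$, and upgrading this to convergence of the full limit in $\la$ for a.e.\ $t$ and $\P$-a.e.\ $\om$ exactly as for the deterministic convolution in Proposition \ref{Hadd-SF-prop}, one concludes that $(\T\diamond\zeta)(t)\in D(C_\Lambda)$ and $C_\Lambda(\T\diamond\zeta)(t)=\chi(t)$ for a.e.\ $t\ge0$, $\P$-a.s.; the estimate already proved for $\chi$ is then exactly \eqref{C-Tcv}, with $\ga$ the admissibility constant of \eqref{Obser-Estim0}. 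I expect the two genuinely delicate points to be the measurable‑selection and adaptedness bookkeeping that makes $\chi$ meaningful, and the justification — valid only because the resolvent regularisation pushes the integrand into $D(A)$, where $C$ is bounded — that $C$, which is neither closed nor closable, may be commuted with the stochastic integral; everything else reduces to It\^o's isometry, Fubini's theorem and dominated convergence.
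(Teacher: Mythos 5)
Your argument is correct and follows essentially the same route as the paper: regularize $C$ by the bounded operators $C\la R(\la,A)$, control them uniformly via It\^{o}'s isometry, Fubini's theorem, a change of variables and the admissibility estimate, and pass to the limit in $L^2_{\F}(0,\al;\mathscr{Y})$ --- the only real difference being that you name the limit explicitly as $\chi(t)=\int_0^t(\Psi\zeta(s))(t-s)\,dW(s)$ via the extended output map, while the paper shows the regularized family $(C\,nR(n,A)(\T\diamond\zeta))_n$ is Cauchy and extracts an a.e.\ convergent subsequence. Both versions rest on the same final step of passing from convergence along a (sub)sequence to existence of the full Yosida limit defining $D(C_\Lambda)$ (you defer to the deterministic argument of Proposition~\ref{Hadd-SF-prop}, the paper states it directly), so your proposal contains no gap beyond what the paper itself leaves implicit.
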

\begin{proof}
Assume that $C$ is admissible for $A$ and denote by $\ga>0$ the associated admissibility constant. Let $\la\in\rho(A)$ and define the bounded operator $\mathscr{C}_\la:=C\la R(\la,A)$. Let $\al>0$ and $\zeta\in L^2_{\F}(0,\al;H)$. Using It\^{o}'s isometry, Fubini's Theorem, a change of variables and the admissibility of $C$ for $A,$ we obtain
\begin{align*}
\mathbb{E}\int_{0}^{\al}\left\|\mathscr{C}_\la \int_{0}^{t}T(t-s)\zeta(s)dW(s)\right\|^{2}_{\mathscr{Y}}dt&=\int_{0}^{\al}\mathbb{E}\|\int_{0}^{t}\mathscr{C}_\la T(t-s)\zeta(s)dW(s)\|^{2}_{\mathscr{Y}}dt\\ &=\int_{0}^{\al}\mathbb{E}\int_{0}^{t}\|CT(t-s)\lambda R(\lambda,A)\zeta(s)\|^{2}_{\mathscr{Y}}dsdt\\&\leq \mathbb{E}\int_{0}^{\al}\int_{s}^{\al}\|CT(t-s)\lambda R(\lambda,A)\zeta(s)\|^{2}_{\mathscr{Y}}dtds\\&\leq \mathbb{E}\int_{0}^{\al}\int_{0}^{\al-s}\|CT(\tau)\lambda R(\lambda,A)\zeta(s)\|^{2}_{\mathscr{Y}}d\tau ds\\&\leq \mathbb{E}\int_{0}^{\al}\int_{0}^{\al}\|CT(\tau)\lambda R(\lambda,A)\zeta(s)\|^{2}_{\mathscr{Y}}d\tau ds\\
&\leq\gamma^{2}\mathbb{E}\int_{0}^{\al}\|\lambda R(\lambda,A)\zeta(s)\|^{2}_{H} ds.
\end{align*}
Then for $n,m\in\mathbb{N}$ such that $n,m>\om_0(A),$ we have
$$\mathbb{E}\int_{0}^{\al}\|(\mathscr{C}_n -\mathscr{C}_m)(T\diamond f)(t)\|^{2}_{\mathscr{Y}}dt\leq \gamma^{2}\mathbb{E}\int_{0}^{\al}\|(n R(n,A)-m R(m,A))\zeta(s)\|^{2}_{H} ds.$$
This shows that $(\mathscr{C}_n(T\diamond \zeta))_{n\in\mathbb{N}^{*}}$ is a Cauchy sequence on $L^{2}_{\F}(0,\al;\mathscr{Y})$. So we can extract a subsequence $(\mathscr{C}_{n_{k}}(T\diamond \zeta)(t))_{k\in\N}$ which converges for a.e. $t\in [0,\al]$ and $\mathbb{P}$-a.s. Thus $(T\diamond \zeta)(t)\in D(C_{\Lambda})$ for a.e. $t\in [0,\al]$ and $\mathbb{P}$-a.s. The rest of the proof follows immediately from the first inequality above.
\end{proof}
\begin{remark}
We assume that the semigroup $\T$ is analytic on $H$ and define the operator $M=(-A)^{\theta}$ for some $\theta\in (0,\frac{1}{2})$. We denote by $C$ the restriction of $M$ to $D(A)$. Using the analyticity of the semigroup we have $\|CT(t)\|\le \frac{M}{t^{\theta}}$ for $t>0$. This implies that $C:D(A)\to X$ is an admissible observation operator for $A$. On the other hand, by using the same arguments as in \cite[Example 4]{ABDH-20}, one  can see that the Yosida extension of $C$ for $A$ coincides with the operator $(-A)^\theta$. Thus for analytic semigroup Proposition \ref{fondamental-lemma} holds also if we replace $C_\Lambda$ by $(-A)^\theta$ for any $\theta\in (0,\frac{1}{2})$. Moreover, if $\T$ is a contractive analytic semigroup of type less than $\frac{\pi}{2},$ one can use Le Merdy \cite{LeMery} to include the critical case $\theta=\frac{1}{2}$ in the discussion above. These kind of results are already available in stochastic analysis for parabolic evolution equations , see e.g. \cite[Chap.4, Section 14.2]{Da-Za-Book-96}, \cite{flandoli1}. Here we have only offered a different proof for this particular case. However, in Proposition \ref{fondamental-lemma}, the semigroup $\T$ is general and the operator $C$ is neither closed nor closeable (same things for the Yosida extension $C_\Lambda$).
\end{remark}

The second main result of this section is the following:
\begin{theorem}\label{ABCW-wellposed}
Assume that the system $(A,B,C)$ is regular on $X,U,\mathscr{Y}$. Let $C_\Lambda$ be the Yosida extension of $C$ for $A$ and $\al>0$ be arbitrary. Then the mild solution of the system \eqref{ABC-uy} satisfies
$X(t;\xi,u)\in D(C_\Lambda)$ for almost every $t> 0$ and $\P$-a.s. and
\begin{align*}
\|C_\Lambda X(\cdot)\|_{L^2_{\F}(0,\al;\mathscr{Y})}\le c \left(\|\xi\|_{L^2_{\calF_{0}}(\Om,H)}+\|u\|_{L^2_{\F}(0,\al;U)}\right)
\end{align*} where $c:=c_\al>0$ is a constant. In particular the system \eqref{ABC-uy} is well-posed.
\end{theorem}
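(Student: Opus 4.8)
The plan is to feed the decomposition of the mild solution from Theorem~\ref{thm-existence} into the three tools already at our disposal: admissibility of $C$ for the semigroup part, the Weiss representation (Theorem~\ref{Weiss-repres}) for the deterministic convolution, and Proposition~\ref{fondamental-lemma} for the two stochastic convolutions. Concretely, combining \eqref{reformulation}, \eqref{stoch-maps} and \eqref{walid-hadd} one writes, for $t\in[0,\al]$,
\begin{align*}
X(t;\xi,u)=T(t)\xi+\tilde{\Phi}_{t}u+\big(\T\diamond\mathscr{M}(X(\cdot,\xi,0))\big)(t)+\big(\T\diamond\mathscr{M}(\Phi^{W}_{\cdot}u)\big)(t).
\end{align*}
Since $(A,B,C)$ is regular, $C$ is in particular an admissible observation operator for $A$, so Proposition~\ref{fondamental-lemma} is applicable to the last two terms. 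I would show separately that each of the four summands lies in $D(C_\Lambda)$ for a.e.\ $t$, $\P$-a.s., that applying $C_\Lambda$ to it produces an $\F$-adapted process, and that its $L^2_{\F}(0,\al;\mathscr{Y})$-norm is controlled by $\|\xi\|_{L^2_{\calF_0}(\Om,H)}$ or by $\|u\|_{L^2_{\F}(0,\al;U)}$; the triangle inequality in $L^2_{\F}(0,\al;\mathscr{Y})$ then finishes the proof, with $\tilde C$ in Definition~\ref{Def-Well-posed-ABCW} taken to be $C_\Lambda$.

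For the term $T(\cdot)\xi$: admissibility of $C$ for $A$ gives $T(t)x\in D(C_\Lambda)$ for a.e.\ $t$ and $\Psi x=C_\Lambda T(\cdot)x$ for every $x\in H$, where $\Psi:H\to L^2([0,\al],\mathscr{Y})$ is the bounded extended output map. Applying this pathwise with $x=\xi(\omega)$ and integrating over $\Om$ yields $\E\int_0^\al\|C_\Lambda T(t)\xi\|^2_{\mathscr{Y}}\,dt\le\ga^2\,\E\|\xi\|^2_H$; adaptedness holds because $\xi$ is $\calF_0$-measurable and $\calF_0\subset\calF_t$. For the term $\tilde{\Phi}_{\cdot}u$: for a.e.\ $\omega$ one has $\tilde{\Phi}_{t}u(\omega)=\Phi_t\big(u(\cdot,\omega)\big)$, the deterministic input map applied to the path $u(\cdot,\omega)\in L^2([0,\al],U)$, so Theorem~\ref{Weiss-repres} gives $\tilde{\Phi}_{t}u(\omega)\in D(C_\Lambda)$ for a.e.\ $t$ and $C_\Lambda\tilde{\Phi}_{t}u(\omega)=\big(\mathscr{F}u(\cdot,\omega)\big)(t)$. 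Boundedness of $\mathscr{F}:L^2([0,\al],U)\to L^2([0,\al],\mathscr{Y})$ then gives $\E\int_0^\al\|C_\Lambda\tilde{\Phi}_{t}u\|^2_{\mathscr{Y}}\,dt\le\|\mathscr{F}\|^2\|u\|^2_{L^2_{\F}(0,\al;U)}$, while adaptedness of $t\mapsto C_\Lambda\tilde{\Phi}_{t}u$ is inherited from the causality identity for $\mathscr{F}$ in Definition~\ref{D-wp} and the adaptedness of $u$.

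For the two stochastic convolutions I would apply Proposition~\ref{fondamental-lemma} with $\zeta(s)=\mathscr{M}(X(s,\xi,0))$ and $\zeta(s)=\mathscr{M}(\Phi^{W}_{s}u)$, after checking these belong to $L^2_{\F}(0,\al;H)$. The a priori bound $\E\|X(s,\xi,0)\|^2_H\le c_\al\,\E\|\xi\|^2_H$ for $s\in[0,\al]$ follows from \eqref{walid-hadd}, It\^o's isometry and Gronwall's inequality, and $\E\|\Phi^{W}_{s}u\|^2_H\le c_\al\,\|u\|^2_{L^2_{\F}(0,\al;U)}$ is \eqref{phi1}. Proposition~\ref{fondamental-lemma} then gives membership in $D(C_\Lambda)$ for a.e.\ $t$, $\P$-a.s., together with
\begin{align*}
\E\int_0^\al\big\|C_\Lambda\big(\T\diamond\mathscr{M}(X(\cdot,\xi,0))\big)(t)\big\|^2_{\mathscr{Y}}\,dt\le\ga^2\|\mathscr{M}\|^2\,\E\int_0^\al\|X(s,\xi,0)\|^2_H\,ds\le c\,\|\xi\|^2_{L^2_{\calF_0}(\Om,H)},
\end{align*}
and the analogous estimate for the $\Phi^{W}$-convolution bounded by $c\,\|u\|^2_{L^2_{\F}(0,\al;U)}$.

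Collecting the four estimates and using $\sqrt{a+b+c+d}\le\sqrt a+\sqrt b+\sqrt c+\sqrt d$ yields the asserted inequality with a constant $c_\al>0$; since $D(C_\Lambda)$ is independent of $\al$, the inclusion $X(t;\xi,u)\in D(C_\Lambda)$ holds for a.e.\ $t>0$, and well-posedness in the sense of Definition~\ref{Def-Well-posed-ABCW} follows with $\tilde C=C_\Lambda$. The genuinely delicate point—rather than the estimates, which are routine once the decomposition is in hand—is the bookkeeping around $C_\Lambda$, which is neither closed nor closeable: one must make sure that $C_\Lambda$ is realized on each piece through the appropriate approximation ($\Psi$ for the semigroup part, $\mathscr{F}$ for the deterministic convolution, the Cauchy-sequence argument of Proposition~\ref{fondamental-lemma} for the stochastic convolutions), that the three resulting maps are jointly measurable and $\F$-adapted, and hence that ``$C_\Lambda X(\cdot)$'' really is a well-defined element of $L^2_{\F}(0,\al;\mathscr{Y})$; the sum then lies in $D(C_\Lambda)$ precisely because it is a finite sum of such elements and $C_\Lambda$ is linear on its domain.
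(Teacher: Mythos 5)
Your proposal is correct and follows essentially the same route as the paper: decompose the mild solution via Theorem \ref{thm-existence} into the semigroup term, the deterministic input map, and the two stochastic convolutions, handle the first by admissibility of $C$, the second pathwise by regularity of $(A,B,C)$ (Theorem \ref{Weiss-repres} plus boundedness of $\mathscr{F}$, then take expectations), and the last two by Proposition \ref{fondamental-lemma} together with the Gronwall bound on $X(\cdot,\xi,0)$ and the estimate \eqref{phi1}. The only difference is cosmetic grouping (the paper treats $X(\cdot,\xi,0)$ as one block and $\Phi^W u$ as the other), and your extra care about adaptedness of $C_\Lambda$ applied to each piece is a harmless refinement of the same argument.
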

\begin{proof}
Let $\al>0$ be arbitrary, $u\in L^2_{\F}(0,\al;U)$ and $\xi\in L^2_{\calF_0}(\Om,H)$. According to Theorem \ref{thm-existence}, the mild solution of the stochastic system \eqref{ABC-uy} exists, unique and it satisfies
\begin{align*}
X(t;\xi,u)=X(t,\xi,0)+\Phi^W_t u,\qquad t\ge 0,
\end{align*}
where $\Phi^{W}_t u$ and $X(t,\xi,0)$ are given by \eqref{stoch-maps} and \eqref{walid-hadd}, respectively. By using It\^{o}'s Isometry and Gronwall's inequality, one can see that for $t\in [0,\al],$
 \begin{align}\label{just-here}
 \E\|X(t,\xi,0)\|^2\le c_1 \|\xi\|^2_{L^2_{\calF_{0}}(\Om,H)}
 \end{align}
 for a constant $c_1:=c_1(\al)>0$. As $C$ is admissible for $A,$ by Subsection \ref{sec:RLS} and Proposition \ref{fondamental-lemma} we have
 $X(t,\xi,0)\in D(C_\Lambda)$ for a.e. $t\ge 0$ and $\P$-a.s. Moreover, by using Proposition \ref{fondamental-lemma} and the inequality \eqref{just-here}, we obtain
 \begin{align}\label{one}
\int^\al_0 \E \left\|C_\Lambda X(t;\xi,0)\right\|^2_{\mathscr{Y}}dt\le c_2^2 \|\xi\|^2_{L^2_{\calF_{0}}(\Om,H)}
\end{align}
for a constant $c_2:=c_2(\al)>0$. In addition, as the system $(A,B,C)$ is regular, we have
\begin{align*}
\int^\al_0 \|C_\Lambda \int^t_0 T_{-1}(t-s)Bu(s,\om)ds\|^2_{\mathscr{Y}}dt\le \kappa_1\int^\al_0 \|u(s,\om)\|^2ds,\qquad \P-a.s.,
\end{align*}
for a constant $\kappa_1:=\kappa_1(\al)>0$ independent of $\om$. Thus
\begin{align}\label{s}
\E\int^\al_0 \left\|C_\Lambda \int^t_0 T_{-1}(t-s)Bu(s,\om)ds\right\|^2_{\mathscr{Y}}dt\le \kappa_1\|u\|^2_{L^2_{\F}(0,\al;U)}.
\end{align}
 Moreover, Proposition  \ref{fondamental-lemma} and the inequality \eqref{phi1}, we have
\begin{align}\label{g1}
\begin{split}
& \int^t_0 T(t-s)\mathscr{M}\left(\Phi^{W}_s u\right)dW(s)\in D(C_\Lambda),\; a.e.\, t\ge 0,\;\P-a.s.,\quad\text{and,}\cr
&\E\int^\t_0\left\| C_\Lambda\int^t_0 T(t-s)\mathscr{M}\left(\Phi^{W}_s u\right)dW(s)\right\|^2_{\mathscr{Y}}dt
 \le \kappa_2^{2} \|u\|^{2}_{L^2_{\F}(0,\al;U)}
 \end{split}
\end{align}
for a constant $\kappa_2:=\ga(\al) \tilde{\ga}(\al) \|\mathscr{M}\|>0$. Hence by combining \eqref{s} and \eqref{g1}, we obtain
 $\Phi^{W}_t u\in D(C_\Lambda)$ for almost every $t\ge 0$ and
\begin{align}\label{ggg}
\int^\al_0 \E\|C_\Lambda\Phi^{W}_t u\|^2_{\mathscr{Y}}dt \le c_2^2 \,\|u\|_{L^2_{\F}(0,\al;U)}^2
\end{align}
for a constant $c_2:=c_2(\al)>0$. Finally, the result immediately follows from \eqref{one} and \eqref{ggg}.
\end{proof}
\begin{example}\label{Schro-regular}
Let $\mathscr{O}\subset \R^n$ ($n\ge 2$) be an open bounded region with $C^3$-boundary
$\partial\mathscr{O}=\overline{\Gamma}_0\cup \overline{\Gamma}_1$,
 where $\Gamma_0$ and $\Gamma_1$ are disjoint
parts of the boundary relatively open in $\partial\mathscr{O}$ and ${\rm int}(\Gamma_0)\neq \mathscr{O}$. Consider the following stochastic Schr\"odinger
equation with partial Dirichlet control and collocated observation
\begin{align}\label{chro}
\begin{cases}
dX(t,x)=i\Delta X(t,x)dt+q(x)X(t,x)dW(t),
& t> 0,\;x\in\mathscr{O},\cr X(0,x)=\xi(x),& x\in \mathscr{O},\cr X(t,x)=0,
& t\ge 0,\; x\in \Gamma_1,\cr X(t,x)=u(t,x), & t\ge 0,\; x\in\Gamma_0,\cr
Y(t,x)=i\displaystyle\frac{\partial (\Delta^{-1}X)}{\partial \nu},& t\ge 0,\; x\in \Gamma_0,
\end{cases}
\end{align}
where
$\nu$ is the unit normal of $\partial\mathscr{O}$ pointing towards the exterior of $\mathscr{O}$, and $q\in L^\infty(\mathscr{O})$.  Let $H=H^{-1}(\mathscr{O})$ (the dual space
of the Sobolev space $H^1_0(\mathscr{O})$ with respect to the pivot
space $L^2(\mathscr{O})$) and $U=L^2(\Gamma_0)$.
According to \cite[Theorem 1.2]{Guo-sha}, the deterministic system assocaited with \eqref{chro} is regular. Thus, by Theorem \ref{ABCW-wellposed}, the stochastic
system \eqref{chro} is well-posed.
\end{example}

\section{Admissibility to perturbed abstract stochastic linear systems}\label{sec:new-VCF}
The main purpose of this section is to introduce a new variation of constants formula to the solutions of the perturbed abstract stochastic linear systems \eqref{perturbed-S-Cauchy}. Moreover, we prove a result on the well-posedness of perturbed boundary stochastic linear systems
\subsection{A new variation of constants formula for perturbed abstract stochastic linear systems}
Let consider  the operator
\begin{align}\label{perturbed-generator}
\mathscr{A}=A+\mathscr{P},\quad D(\mathscr{A})=D(A).
\end{align}
In this section, we denote by $\mathscr{P}_{\Lambda}$ the Yosida extension of $\mathscr{P}$ for $A$. The following result is summarized from \cite[Theorem 2.1]{Hadd-SF} and \cite{Ha-Id-SCL}.
\begin{theorem}\label{M-V-thm}
Assume that $\mathscr{P}\in\calL(D(A),H)$ is an admissible observation operator for $A$. then the operator $(\mathscr{A},D(\mathscr{A}))$ generates a $C_0$-semigroup $\mathscr{T}:=(\mathscr{T}(t))_{t\ge 0}$ on $H$ satisfying $\mathscr{T}(t)x\in D(\mathscr{P}_\Lambda)$ for a.e. $t>0$ and all $x\in H$. Moreover,
\begin{align}\label{semigroup-S}
\begin{split}
&\int^\al_0 \|\mathscr{P}_\Lambda \mathscr{T}(\si)x\|^2d\si\le \tilde{\ga}^2 \|x\|^2,\cr
&\mathscr{T}(t)x=T(t)x+\int_{0}^{t}T(t-s)\mathscr{P}_\Lambda \mathscr{T}(s)xds
\end{split}
\end{align}
for any $t\ge 0$ and $x\in H$, where $\al>0$ and $\tilde{\ga}:=\tilde{\ga}(\al)>0$ are constants.
\end{theorem}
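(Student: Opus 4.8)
The plan is to establish Theorem~\ref{M-V-thm} by reducing it to the two cited sources while exploiting the structure of admissibility. First I would recall that the admissibility of $\mathscr{P}\in\calL(D(A),H)$ for $A$ (in the deterministic sense) is exactly the hypothesis under which \cite[Theorem 2.1]{Hadd-SF} applies: there it is shown that $(A+\mathscr{P},D(A))$ generates a $C_0$-semigroup $\mathscr{T}$, and that the Yosida extension $\mathscr{P}_\Lambda$ relative to $A$ captures the "output" of the perturbation. The core identity to be verified is the fixed-point / variation-of-parameters relation
\begin{align*}
\mathscr{T}(t)x=T(t)x+\int_0^t T(t-s)\mathscr{P}_\Lambda\mathscr{T}(s)x\,ds,\qquad x\in H,
\end{align*}
together with the admissibility estimate $\int_0^\al\|\mathscr{P}_\Lambda\mathscr{T}(\si)x\|^2\,d\si\le\tilde\ga^2\|x\|^2$.

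The key steps, in order, are as follows. Step one: show $\mathscr{T}(t)x\in D(\mathscr{P}_\Lambda)$ for a.e. $t>0$ and every $x\in H$. For $x\in D(A)$ this is immediate because $\mathscr{T}(t)x\in D(A)$ and $D(A)\subset D(\mathscr{P}_\Lambda)$ trivially with $\mathscr{P}_\Lambda|_{D(A)}=\mathscr{P}$; for general $x\in H$ one approximates by $D(A)$ elements and uses the admissibility inequality to pass to the limit in $L^2(0,\al;H)$, extracting an a.e.-convergent subsequence exactly as in the proof of Proposition~\ref{fondamental-lemma}. Step two: derive the admissibility estimate for $\mathscr{P}_\Lambda\mathscr{T}(\cdot)$. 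This is the content of \cite[Theorem 2.1]{Hadd-SF}; the mechanism is that the perturbed output map $x\mapsto\mathscr{P}_\Lambda\mathscr{T}(\cdot)x$ solves a Volterra equation whose kernel is the (bounded) extended output map of $(A,\mathscr{P})$, and a Gronwall-type argument on $L^2$ norms over $[0,\al]$ yields the constant $\tilde\ga(\al)$. Step three: establish the integral identity. Start from the classical fact that for $x\in D(A)$ the function $v(t)=\mathscr{T}(t)x$ satisfies $\dot v=Av+\mathscr{P}v$ in $H_{-1}$, integrate and use the variation-of-constants formula for $A$ in $H_{-1}$, which gives $v(t)=T(t)x+\int_0^t T_{-1}(t-s)\mathscr{P}v(s)\,ds$; then observe that $\mathscr{P}v(s)=\mathscr{P}_\Lambda\mathscr{T}(s)x$ on $D(A)$ and that, by admissibility, the convolution $\int_0^t T_{-1}(t-s)\mathscr{P}_\Lambda\mathscr{T}(s)x\,ds$ actually lies in $H$ and depends continuously on $x\in H$, so both sides extend to all $x\in H$ by density.

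The main obstacle I anticipate is the density/closure argument in Step three: $\mathscr{P}$ (and $\mathscr{P}_\Lambda$) need not be closed or closeable, so one cannot naively commute $\mathscr{P}_\Lambda$ with the integral or with limits. The remedy, already used elsewhere in the paper, is to work with the \emph{bounded} regularized operators $\mathscr{P}_\la:=\mathscr{P}\la R(\la,A)\in\calL(H)$: establish the identity with $\mathscr{P}_\la$ in place of $\mathscr{P}_\Lambda$ (where everything is legitimate since $\mathscr{P}_\la$ is bounded), obtain the admissibility estimate uniformly in $\la$, and then let $\la\to+\infty$, using that $\la R(\la,A)x\to x$ in $H$ and the uniform bound to control the convolution term in $L^2(0,\al;H)$; the limit then identifies with $\mathscr{P}_\Lambda\mathscr{T}(\cdot)x$ by definition of the Yosida extension. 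This is precisely the route taken in \cite{Hadd-SF} and \cite{Ha-Id-SCL}, so the theorem follows by citing those works once the reduction is made explicit; the remaining verification is routine and I would not reproduce the Gronwall computation in detail.
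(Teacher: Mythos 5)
Your proposal is essentially the paper's own treatment: the paper gives no internal proof of Theorem \ref{M-V-thm}, but simply quotes it from \cite[Theorem 2.1]{Hadd-SF} and \cite{Ha-Id-SCL}, which is exactly the reduction you make, and your sketch of the mechanism (generation under $L^2$-admissibility via a Miyadera--Voigt-type argument, the regularized perturbations $\mathscr{P}\la R(\la,A)$, a uniform admissibility estimate with Gronwall, and a density passage identifying the limit with $\mathscr{P}_\Lambda$) is the route those references follow. No discrepancy to report.
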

\begin{remark}\label{remark-section3}
The estimation in  \eqref{semigroup-S} says that $\mathscr{P}$ is an admissible observation operator for $\mathscr{A}$. More generally, it is shown in \cite{Ha-Id-SCL} that the conditions if in addition $C\in\calL(D(A),Y)$  is an admissible observation operator for $A$, then $C$ is also admissible for $\mathscr{A}$. Moreover, if $C_{\Lambda}$ and $\tilde{C}_{\Lambda}$ denote the Yosida extensions of $C$ for $A$ and $\mathscr{A},$ respectively, then $C_\Lambda\equiv \tilde{C}_{\Lambda}$ on $D(C_{\Lambda})\cap D(\mathscr{P}_{\Lambda})$. In particular, $\mathscr{P}_{\Lambda}\equiv \tilde{\mathscr{P}}_{\Lambda},$ where $\tilde{\mathscr{P}}_{\Lambda}$ denotes the Yosida extension of $\mathscr{P}$ for $\mathscr{A}$.
%If in addition $\zeta\in L^2_{\F}(0,\al;H)$ ($\al>0$), then by Proposition \ref{fondamental-lemma}, we deduce that
%\begin{align*}
%& (\T^L\diamond \zeta)(t):=\int_{0}^{t}T^L(t-s)\zeta(s)dW_{s}\in D(C_{\Lambda})\cap D(C_{\Lambda}) ,\; a.e.\, t\ge 0,\; \mathbb{P}-a.s, \quad\text{and,}\cr & \mathbb{E}\int_{0}^{\al}\|C_{\Lambda}(\T^L\diamond \zeta)(t)\|^{2}_{U} dt\leq \ga^2\mathbb{E}\int_{0}^{\al}\|\zeta(s)\|^{2}_{H}ds\end{align*}
%for a constant $\ga:=\ga(\al)>0$ independent of $\zeta$.
\end{remark}
Now we have the following result.
\begin{proposition}\label{prop-T-L-W}
Assume that $\mathscr{P}\in\calL(D(A),H)$ is an admissible observation operator for $A$, and let $\mathscr{T}$ the $C_0$-semigroup on $H$ generated by $\mathscr{A}$. Then the stochastic abstract Cauchy problem \eqref{perturbed-S-Cauchy} has a unique mild solution $X(\cdot)\in \calC_\F(0,+\infty;L^2(\Om,H))$ satisfying
\begin{align}\label{sol-mild}
\begin{split}
& X(t)=\mathscr{T}(t)\xi+\int^t_0 \mathscr{T}(t-s)\mathscr{M}(X(s))dW(s),\qquad t\ge 0, \cr & X(t)\in D(\mathscr{P}_{\Lambda}),\; a.e.\, t\ge 0,\; \mathbb{P}-a.s, \quad \E \int^\al_0 \|\mathscr{P}_{\Lambda} X(t)\|^2dt\le c^2 \E\|\xi\|^2
 \end{split}
\end{align}
for constants $\al>0$ and $c=c(\al)>0$.
%If in addition, the condition {\bf (H2)} is satisfied, we also have
%\begin{align*}
%& X(t)\in D(C_\Lambda),\; a.e.\, t\ge 0,\; \mathbb{P}-a.s,\quad and
%  \cr & \E \int^\al_0 \|C_\Lambda X(t)\|^2dt\le c_2^2 \E\|\xi\|^2
%\end{align*}
%for constants $\al>0$ and $c_2=c_2(\al)>0$. In particular, the observed stochastic linear perturbed linear system $(C,A+L,W)$ is well-posed.
\end{proposition}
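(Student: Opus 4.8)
The plan is to follow the same strategy as in the proof of Theorem~\ref{thm-existence}: reduce the perturbed equation \eqref{perturbed-S-Cauchy} to a standard abstract stochastic Cauchy problem, but now with $\mathscr{A}=A+\mathscr{P}$ in place of $A$. First I would invoke Theorem~\ref{M-V-thm} to obtain the $C_0$-semigroup $\mathscr{T}$ generated by $(\mathscr{A},D(A))$ together with the admissibility estimate in \eqref{semigroup-S}. Since $\mathscr{M}\in\calL(H)$ is a bounded (hence globally Lipschitz) coefficient, the classical theory (e.g. \cite[Chap.~7]{Da-Za-Book-96}) applied with the generator $\mathscr{A}$ gives a unique mild solution $X(\cdot)\in\calC_\F(0,+\infty;L^2(\Om,H))$ of \eqref{perturbed-S-Cauchy} represented by the first line of \eqref{sol-mild}, namely $X(t)=\mathscr{T}(t)\xi+\int_0^t\mathscr{T}(t-s)\mathscr{M}(X(s))dW(s)$. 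This settles existence, uniqueness and the variation of constants formula.

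Next I would establish the regularity $X(t)\in D(\mathscr{P}_\Lambda)$ and the estimate. The key point is Remark~\ref{remark-section3}: $\mathscr{P}$ is an admissible observation operator not only for $A$ but also for $\mathscr{A}$, and its Yosida extension for $\mathscr{A}$ coincides with $\mathscr{P}_\Lambda$ on the relevant domain. Therefore I may apply Proposition~\ref{fondamental-lemma} (or Proposition~\ref{Hadd-SF-prop}) with the semigroup $\mathscr{T}$ and the admissible operator $\mathscr{P}$ in place of $\T$ and $C$. Writing $X(t)=\mathscr{T}(t)\xi+(\mathscr{T}\diamond\mathscr{M}(X))(t)$, the deterministic term $\mathscr{T}(t)\xi$ lies in $D(\mathscr{P}_\Lambda)$ for a.e.\ $t$ and $\P$-a.s.\ by the admissibility estimate \eqref{semigroup-S} applied pathwise, while the stochastic convolution term lies in $D(\mathscr{P}_\Lambda)$ for a.e.\ $t$, $\P$-a.s., by Proposition~\ref{fondamental-lemma}. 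Hence $X(t)\in D(\mathscr{P}_\Lambda)$ for a.e.\ $t\ge 0$, $\P$-a.s.

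Finally, to get the quantitative bound I would combine two estimates. For the deterministic part, integrating \eqref{semigroup-S} over $\om$ yields $\E\int_0^\al\|\mathscr{P}_\Lambda\mathscr{T}(t)\xi\|^2dt\le\tilde\ga^2\,\E\|\xi\|^2$. For the stochastic convolution, Proposition~\ref{fondamental-lemma} (with semigroup $\mathscr{T}$) gives $\E\int_0^\al\|\mathscr{P}_\Lambda(\mathscr{T}\diamond\mathscr{M}(X))(t)\|^2dt\le \ga^2\|\mathscr{M}\|^2\,\E\int_0^\al\|X(s)\|^2ds$, and a standard It\^o-isometry plus Gronwall argument applied to the first line of \eqref{sol-mild} bounds $\E\int_0^\al\|X(s)\|^2ds$ by $c_0^2\,\E\|\xi\|^2$ for a constant $c_0=c_0(\al)$ (using $\|\mathscr{T}(t)\|\le Me^{\delta t}$). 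Adding the two contributions gives the claimed inequality $\E\int_0^\al\|\mathscr{P}_\Lambda X(t)\|^2dt\le c^2\,\E\|\xi\|^2$. The only subtle point — and the step I expect to require the most care — is the identification of the Yosida extension of $\mathscr{P}$ for $\mathscr{A}$ with $\mathscr{P}_\Lambda$, so that Proposition~\ref{fondamental-lemma} can legitimately be invoked with $\mathscr{T}$; this is exactly what Remark~\ref{remark-section3} (citing \cite{Ha-Id-SCL}) provides, and I would make that dependence explicit.
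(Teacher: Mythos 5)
Your proposal is correct and follows essentially the same route as the paper: existence, uniqueness and the variation of constants formula come from Theorem~\ref{M-V-thm} (with the bounded coefficient $\mathscr{M}$ and standard theory for the generator $\mathscr{A}$), and the regularity $X(t)\in D(\mathscr{P}_\Lambda)$ with the quantitative estimate is obtained by combining Proposition~\ref{fondamental-lemma} (applied to the semigroup $\mathscr{T}$) with Remark~\ref{remark-section3}, which is exactly the paper's argument. Your explicit Gronwall bound on $\E\int_0^\al\|X(s)\|^2\,ds$ and the explicit identification $\tilde{\mathscr{P}}_\Lambda\equiv\mathscr{P}_\Lambda$ simply spell out details the paper leaves implicit.
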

\begin{proof}
The existence of the mild solution that satisfies the variation of constants formula in \eqref{sol-mild} follows immediately from Theorem \ref{M-V-thm}, while the estimation in \eqref{sol-mild}  is obtained by combining Proposition \ref{fondamental-lemma} with Remark \ref{remark-section3}.
%Moreover, the rest of the proof is obtained by the same arguments.
\end{proof}
In the sequel we are interested in rewriting  the mild solution obtained in Proposition \ref{prop-T-L-W} using only the initial semigroup $\T$ and
the Yosida extension of $ \mathscr{P}$ for $A$. To this end, we need the following technical results.
\begin{lemma}\label{technical-lemma}
Assume that $\mathscr{P}$ is an admissible observation operator for $A$ and let $X(\cdot)$ be the process given in \eqref{sol-mild}.
For any $n>\om_0(\mathscr{A})$, we  set $\mathscr{M}_n=n R(n,\mathscr{A})\mathscr{M}$ and define
\begin{align}\label{suite}
X^n(t)=\mathscr{T}(t)\xi+\int^t_0 \mathscr{T}(t-s)\mathscr{M}_n(X^n(s))dW(s),\quad t\ge 0.
\end{align}
For any $\al>0$, $X^n(\cdot)$ converges to $X(\cdot)$ in $\calC_{\F}(0,\al;L^2(\Om,H))$ as $n\to+\infty$. Moreover, for any $n>\om_0(\mathscr{A}),$ we have $X^n(t)\in D(\mathscr{P}_\Lambda)$ for a.e. $t\ge 0,$ $\P$-a.s., and
\begin{align}\label{PPP}
\lim_{n\to +\infty}\|\mathscr{P}_\Lambda X^n(\cdot)-\mathscr{P}_\Lambda X(\cdot)\|_{L^2_{\F}(0,\al;H)}=0.
\end{align}
\end{lemma}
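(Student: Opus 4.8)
The plan is to establish the convergence $X^n\to X$ in $\calC_\F(0,\al;L^2(\Om,H))$ by a Gr\"onwall argument using the fact that $\mathscr{M}_n\to \mathscr{M}$ strongly (indeed $\mathscr{M}_n x = n R(n,\mathscr{A})\mathscr{M}x \to \mathscr{M}x$ in $H$ for each $x$, since $n R(n,\mathscr{A})\to I$ strongly as $n\to+\infty$), together with a uniform bound $\|\mathscr{M}_n\|\le \tilde M$ for $n$ large coming from the uniform boundedness of $n R(n,\mathscr{A})$. First I would subtract the integral equations \eqref{sol-mild} and \eqref{suite}, take $H$-norms, apply It\^o's isometry and the exponential bound $\|\mathscr{T}(t)\|\le \tilde M_0 e^{\tilde\delta t}$ on $[0,\al]$, and split $\mathscr{M}_n(X^n(s))-\mathscr{M}(X(s)) = \mathscr{M}_n(X^n(s)-X(s)) + (\mathscr{M}_n-\mathscr{M})(X(s))$. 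The first piece is controlled by $\|\mathscr{M}_n\|^2\,\E\|X^n(s)-X(s)\|^2$ and feeds the Gr\"onwall loop; the second piece, $\E\int_0^\al\|(\mathscr{M}_n-\mathscr{M})X(s)\|^2ds$, tends to $0$ by dominated convergence (pointwise convergence of the integrand, domination by $(\tilde M+\|\mathscr{M}\|)^2\|X(s)\|^2$ which is integrable since $X\in\calC_\F(0,\al;L^2(\Om,H))$). Gr\"onwall's inequality then yields $\sup_{t\in[0,\al]}\E\|X^n(t)-X(t)\|^2\to 0$, which is exactly convergence in $\calC_\F(0,\al;L^2(\Om,H))$.

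For the second assertion, note that $X^n$ is a stochastic convolution against $\T$ up to a lower-order term. More precisely, using the perturbation identity for $\mathscr{T}$ in \eqref{semigroup-S} one rewrites \eqref{suite} as
\begin{align*}
X^n(t)=\mathscr{T}(t)\xi+(\T\diamond \mathscr{M}_n(X^n))(t)+\int_0^t T(t-s)\Bigl(\int_0^s \mathscr{P}_\Lambda\mathscr{T}(s-r)\,d\bigl(\T\diamond\mathscr{M}_n(X^n)\bigr)\cdots\Bigr),
\end{align*}
but a cleaner route is simply to invoke Proposition \ref{fondamental-lemma} (with $C=\mathscr{P}$) applied to the process $\zeta^n(s):=\mathscr{M}_n(X^n(s))$: since $\mathscr{P}$ is admissible for $A$ and $\zeta^n\in L^2_\F(0,\al;H)$, the convolution $(\T\diamond\zeta^n)(t)\in D(\mathscr{P}_\Lambda)$ a.e., $\P$-a.s. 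Combined with Remark \ref{remark-section3} (which gives $\mathscr{T}(t)\xi\in D(\mathscr{P}_\Lambda)$ a.e. and $\mathscr{P}_\Lambda\equiv\tilde{\mathscr{P}}_\Lambda$, together with Proposition \ref{fondamental-lemma} applied to $\mathscr{T}$ via the admissibility of $\mathscr{P}$ for $\mathscr{A}$) and the representation \eqref{suite}, we get $X^n(t)\in D(\mathscr{P}_\Lambda)$ a.e., $\P$-a.s., with the quantitative bound
\begin{align*}
\E\int_0^\al\|\mathscr{P}_\Lambda X^n(t)\|^2\,dt\le \kappa\Bigl(\E\|\xi\|^2+\E\int_0^\al\|\mathscr{M}_n(X^n(s))\|^2ds\Bigr)\le \kappa'\Bigl(\E\|\xi\|^2+\sup_{s\le\al}\E\|X^n(s)\|^2\Bigr),
\end{align*}
uniformly in $n$ (using $\|\mathscr{M}_n\|\le\tilde M$ and the already-established uniform bound on $X^n$).

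Finally, for \eqref{PPP}, I would again work with the linear map $\zeta\mapsto \mathscr{P}_\Lambda(\text{mild solution driven by }\zeta)$. Write $X^n-X$ as the mild solution of the same perturbed equation driven by $\xi=0$ and noise coefficient-discrepancy $\mathscr{M}_n(X^n(\cdot))-\mathscr{M}(X(\cdot))$; equivalently, by linearity of the representation one has $\mathscr{P}_\Lambda(X^n-X) = \mathscr{P}_\Lambda\bigl(\mathscr{T}\diamond[\mathscr{M}_n(X^n)-\mathscr{M}(X)]\bigr)$ plus the analogous lower-order terms, and applying the estimate of Proposition \ref{fondamental-lemma} (with $C=\mathscr{P}$, semigroup $\mathscr{T}$, which is legitimate since $\mathscr{P}$ is admissible for $\mathscr{A}$ by Remark \ref{remark-section3}) gives
\begin{align*}
\E\int_0^\al\|\mathscr{P}_\Lambda X^n(t)-\mathscr{P}_\Lambda X(t)\|^2dt\le \tilde\ga^2\,\E\int_0^\al\|\mathscr{M}_n(X^n(s))-\mathscr{M}(X(s))\|^2ds,
\end{align*}
and the right-hand side is bounded by $2\tilde M^2\sup_{s\le\al}\E\|X^n(s)-X(s)\|^2\cdot\al + 2\,\E\int_0^\al\|(\mathscr{M}_n-\mathscr{M})X(s)\|^2ds$, both of which go to $0$ by the first part and dominated convergence respectively. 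The main obstacle I anticipate is the bookkeeping around which Yosida extension is meant: one must be careful that $\mathscr{P}_\Lambda$ throughout is the Yosida extension for $A$, while the natural estimates produce the Yosida extension for $\mathscr{A}$; Remark \ref{remark-section3} ($\mathscr{P}_\Lambda\equiv\tilde{\mathscr{P}}_\Lambda$) is what reconciles the two, and it should be cited explicitly at each such juncture. A secondary technical point is justifying the uniform-in-$n$ bound $\|\mathscr{M}_n\|\le\tilde M$, which follows from the Hille--Yosida estimate for the semigroup $\mathscr{T}$ generated by $\mathscr{A}$ applied on the fixed interval $[0,\al]$.
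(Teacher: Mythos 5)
Your argument is correct and is essentially the paper's proof: the same splitting $\mathscr{M}_n(X^n)-\mathscr{M}(X)=\mathscr{M}_n(X^n-X)+(\mathscr{M}_n-\mathscr{M})X$ with dominated convergence, followed by Proposition \ref{fondamental-lemma} applied to the semigroup $\mathscr{T}$ (using that $\mathscr{P}$ is admissible for $\mathscr{A}$) together with Remark \ref{remark-section3} to identify $\tilde{\mathscr{P}}_\Lambda$ with $\mathscr{P}_\Lambda$; the only deviation is that you establish the convergence $X^n\to X$ in $\calC_{\F}(0,\al;L^2(\Om,H))$ by an explicit Gr\"onwall argument, whereas the paper simply cites \cite[Theorem 3.4]{govi-appr-book}. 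One simplification you could make: subtracting \eqref{sol-mild} from \eqref{suite} cancels the $\mathscr{T}(t)\xi$ terms exactly, so $X^n-X$ is precisely the stochastic convolution of $\mathscr{M}_n(X^n)-\mathscr{M}(X)$ against $\mathscr{T}$, and neither the ``lower-order terms'' nor the attempted rewriting in terms of $\T$ (with admissibility of $\mathscr{P}$ for $A$) is needed.
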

\begin{proof}
The fact that $X^n(\cdot)$ converges to $X(\cdot)$ in $\calC_{\F}(0,\al;L^2(\Om,H))$ as $n\to+\infty$ follows in the same way as in \cite[Theorem 3.4]{govi-appr-book}. Given $\al>0,$ let us first prove that
\begin{align}\label{AAAAA}
\lim_{n\to \infty}\E\int^\al_0\|\mathscr{M}_n(X^n(s))-\mathscr{M}(X(s))\|^2ds=0.
\end{align}
To this end, let $n>\om_0(\mathscr{A})$ arbitrary sufficiently large and $M'\in \R^+$ such that $\|nR(n,A)\|\le M'$. We have
\begin{align*}
&\E\int^\al_0\|\mathscr{M}_n(X^n(s))-\mathscr{M}(X(s))\|^2ds\cr & \qquad \le
\int^\al_0 \E \|\mathscr{M}_n(X^n(s)-X(s))\|^2ds+\int^t_0 \E \|\mathscr{M}_n(X(s))-\mathscr{M}(X(s))\|^2ds
\cr & \qquad \le
M' \|\mathscr{M}\|\int^\al_0 \E \|X^n(s)-X(s)\|^2ds+\int^\al_0 \E \|\mathscr{M}_n(X(s))-\mathscr{M}(X(s))\|^2ds.
\end{align*}
On the other hand, we have  $\mathscr{M}_n(X(s))\to \mathscr{M}(X(s))$ as $n\to+\infty$  and
$\|\mathscr{M}_n(X(s))\|\le M' \|\mathscr{M}\| \|X(s)\|$ for any $s\in [0,\al]$ and $\P$-a.s.. Now  \eqref{AAAAA} follows from the first assertion in the proof and the Dominated convergence theorem in $L^2$-spaces. Now let $\tilde{\mathscr{P}}_\Lambda$ be the Yosida extension
of $\mathscr{P}$ for $\mathscr{A}$. By Remark \ref{remark-section3}, we have $\tilde{\mathscr{P}}_\Lambda=\mathscr{P}_\Lambda$.
Now by using Proposition \ref{fondamental-lemma}, $X^n(t)\in D(\mathscr{P}_\Lambda)$ for a.e. $t\ge 0,\;\P$-a.s. and
\begin{align*}
&\E\int^\al_0 \|\mathscr{P}_\Lambda X^n(t)-\mathscr{P}_\Lambda X(t)\|^2dt\cr &\qquad\qquad=
\E\int^\al_0\left\|\mathscr{P}_{\Lambda}\int^t_0 \mathscr{T}(t-s)(\mathscr{M}_n(X^n(s))-\mathscr{M}(X(s)))dW(s)\right\|^2 dt
\cr &\qquad\qquad\leq \tilde{\ga}^2
\E\int^\al_0\|\mathscr{M}_n(X^n(s))-\mathscr{M}(X(s))\|^2ds.
\end{align*}
for a constant $\tilde{\ga}:=\tilde{\ga}(\al)>0$ independent of $n$. Thus \eqref{PPP} follows by \eqref{AAAAA}.
\end{proof}
The following gives another expression to the process $X^n(\cdot)$.
\begin{lemma}\label{new variation0}
Assume that $\mathscr{P}$ is an admissible observation operator for $A$  and let $(X^n(\cdot))_{n>\om_0(A)}$ the process given by \eqref{suite}. For any $ \xi \in L^{2}_{\mathcal{F}_{0}}(\Omega;H)$, we have
\begin{align}\label{VCF-MV0}
X^n(t)=T(t)\xi+\int_{0}^{t}T(t-s)\mathscr{P}_{\Lambda}X^n(s)ds+\int_{0}^{t}T(t-s)\mathscr{M}_n(X^n(s))dW(s)\end{align}
for any $t\ge 0$ and $\P$-a.s.
\end{lemma}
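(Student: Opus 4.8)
The plan is to substitute the variation of constants formula \eqref{semigroup-S} for $\mathscr{T}$ into the defining equation \eqref{suite} and then rearrange the resulting iterated stochastic/Lebesgue integrals by means of a stochastic Fubini theorem. Throughout set $g(s):=\mathscr{M}_n(X^n(s))$; since $\mathscr{M}_n=nR(n,\mathscr{A})\mathscr{M}$, this is an $\F$-adapted process in $L^2_{\F}(0,\al;H)$ with values in $D(\mathscr{A})=D(A)$, so that $\mathscr{T}(\tau)g(s)\in D(A)$ and $\mathscr{P}_\Lambda\mathscr{T}(\tau)g(s)=\mathscr{P}\mathscr{T}(\tau)g(s)$ for every $\tau\ge 0$. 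Write $Y^n(t):=\int_0^t\mathscr{T}(t-s)g(s)\,dW(s)$, so that $X^n(t)=\mathscr{T}(t)\xi+Y^n(t)$; recall that $X^n(t)\in D(\mathscr{P}_\Lambda)$ for a.e. $t$, $\P$-a.s. (Lemma \ref{technical-lemma}), while $\mathscr{T}(t)\xi\in D(\mathscr{P}_\Lambda)$ for a.e. $t$, $\P$-a.s. (Theorem \ref{M-V-thm} together with Fubini), and hence also $Y^n(t)\in D(\mathscr{P}_\Lambda)$ for a.e. $t$, $\P$-a.s.

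Since the deterministic part $\mathscr{T}(t)\xi=T(t)\xi+\int_0^t T(t-s)\mathscr{P}_\Lambda\mathscr{T}(s)\xi\,ds$ is exactly \eqref{semigroup-S}, it suffices to prove the analogous identity for the stochastic convolution,
\begin{align*}
Y^n(t)=\int_0^t T(t-s)g(s)\,dW(s)+\int_0^t T(t-s)\mathscr{P}_\Lambda Y^n(s)\,ds;
\end{align*}
adding the two and using linearity of $\mathscr{P}_\Lambda$ on its domain gives \eqref{VCF-MV0}. To establish it, insert \eqref{semigroup-S} with $x=g(s)$ inside the stochastic integral, obtaining
\begin{align*}
Y^n(t)=\int_0^t T(t-s)g(s)\,dW(s)+\int_0^t\Big(\int_0^{t-s}T(t-s-r)\mathscr{P}_\Lambda\mathscr{T}(r)g(s)\,dr\Big)dW(s),
\end{align*}
and substitute $\sigma=s+r$ in the inner integral to rewrite it as $\int_s^t T(t-\sigma)\mathscr{P}_\Lambda\mathscr{T}(\sigma-s)g(s)\,d\sigma$. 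The bound $\E\int_0^t\int_s^t\|T(t-\sigma)\mathscr{P}_\Lambda\mathscr{T}(\sigma-s)g(s)\|^2\,d\sigma\,ds<\infty$ follows from the admissibility estimate for $\mathscr{P}$ relative to $\mathscr{A}$ in \eqref{semigroup-S} (Remark \ref{remark-section3}) together with $g\in L^2_{\F}(0,t;H)$, so a stochastic Fubini theorem (see, e.g., \cite{Da-Za-Book-96}) allows interchanging $d\sigma$ and $dW(s)$ over the region $\{0\le s\le\sigma\le t\}$ and, after pulling out the deterministic operator $T(t-\sigma)$, yields $\int_0^t T(t-\sigma)\big(\int_0^{\sigma}\mathscr{P}_\Lambda\mathscr{T}(\sigma-s)g(s)\,dW(s)\big)d\sigma$.

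It therefore remains to identify $\int_0^{\sigma}\mathscr{P}_\Lambda\mathscr{T}(\sigma-s)g(s)\,dW(s)$ with $\mathscr{P}_\Lambda Y^n(\sigma)$, i.e. to commute the (non-closed) Yosida extension with the It\^{o} integral; this is the step I expect to be the main obstacle. I would argue as in the proof of Proposition \ref{fondamental-lemma}, now applied to the semigroup $\mathscr{T}$ and the operator $\mathscr{P}$, which is admissible for $\mathscr{A}$ with $\tilde{\mathscr{P}}_\Lambda=\mathscr{P}_\Lambda$ (Remark \ref{remark-section3}): for $\la>\om_0(\mathscr{A})$ the bounded operator $\mathscr{C}_\la:=\mathscr{P}\la R(\la,\mathscr{A})$ commutes with the stochastic integral, so $\mathscr{C}_\la Y^n(\sigma)=\int_0^{\sigma}\mathscr{C}_\la\mathscr{T}(\sigma-s)g(s)\,dW(s)$; since $\mathscr{C}_\la\mathscr{T}(\cdot)x\to\mathscr{P}_\Lambda\mathscr{T}(\cdot)x$ in $L^2(0,\al;H)$ for each $x$ (boundedness of the extended output map of $(\mathscr{A},\mathscr{P})$ and $\la R(\la,\mathscr{A})x\to x$), It\^{o}'s isometry gives $\mathscr{C}_\la Y^n\to\int_0^{\cdot}\mathscr{P}_\Lambda\mathscr{T}(\cdot-s)g(s)\,dW(s)$ in $L^2_{\F}(0,\al;H)$, while Proposition \ref{fondamental-lemma} (applied to $\mathscr{T}$ and $\mathscr{P}$) gives $\mathscr{C}_\la Y^n\to\mathscr{P}_\Lambda Y^n$ in the same space; hence the two limits coincide for a.e. $\sigma$, $\P$-a.s. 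Plugging this back and combining with the deterministic formula for $\mathscr{T}(t)\xi$ gives \eqref{VCF-MV0}. Beyond this commutation, the only care needed is in verifying the integrability and adaptedness that legitimize the stochastic Fubini theorem; the rest is routine bookkeeping.
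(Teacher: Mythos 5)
Your argument is correct, and its skeleton is the same as the paper's: substitute the deterministic identity \eqref{semigroup-S} into \eqref{suite}, change variables, and apply a stochastic Fubini theorem on $\{0\le s\le\sigma\le t\}$. Where you diverge is in the one step you yourself flag as the main obstacle, commuting $\mathscr{P}_\Lambda$ with the It\^{o} integral. The paper never faces this as a limit problem: since $\mathscr{M}_n=nR(n,\mathscr{A})\mathscr{M}$, it writes $\mathscr{P}_\Lambda\mathscr{T}(\tau)\mathscr{M}_n(X^n(s))=\mathscr{P}nR(n,\mathscr{A})\mathscr{T}(\tau)\mathscr{M}(X^n(s))$ and, after Fubini, pulls the \emph{bounded} operator $\mathscr{P}nR(n,\mathscr{A})$ out of the $dW(s)$-integral, then moves $nR(n,\mathscr{A})$ back inside to recover $\mathscr{P}_\Lambda\int_0^{\sigma}\mathscr{T}(\sigma-s)\mathscr{M}_n(X^n(s))dW(s)$; this purely algebraic use of the smoothing factor is essentially the reason the lemma is stated for the approximants $X^n$ at all (you noticed that $g(s)\in D(A)$ but did not exploit it here). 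Your alternative — approximating with $\mathscr{C}_\lambda=\mathscr{P}\lambda R(\lambda,\mathscr{A})$, using the boundedness of the extended output map of $(\mathscr{P},\mathscr{A})$, It\^{o}'s isometry with dominated convergence in $s$, and identifying the two $L^2$-limits via Proposition \ref{fondamental-lemma} and Remark \ref{remark-section3} — is valid and in fact proves the more general commutation identity $\mathscr{P}_\Lambda\int_0^{\sigma}\mathscr{T}(\sigma-s)g(s)dW(s)=\int_0^{\sigma}\mathscr{P}_\Lambda\mathscr{T}(\sigma-s)g(s)dW(s)$ for any admissible $\mathscr{P}$, at the price of a longer limiting argument; note only that the convergence $\mathscr{C}_\lambda Y^n\to\mathscr{P}_\Lambda Y^n$ in $L^2_{\F}(0,\alpha;H)$ is extracted from the Cauchy-sequence argument in the \emph{proof} of Proposition \ref{fondamental-lemma} (run with $\mathscr{T}$, $\mathscr{P}$ and continuous $\lambda\to\infty$) rather than from its statement. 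Your splitting $X^n=\mathscr{T}(\cdot)\xi+Y^n$ versus the paper's treatment of the full expression is only a cosmetic difference.
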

\begin{proof}
By using \eqref{semigroup-S}, we have
\begin{align}\label{GGGG}
 X^n(t)&=T(t)\xi+ \int_{0}^{t}T(t-s)\mathscr{M}_n(X^n(s))dW(s)+\int_{0}^{t}T(t-s)\mathscr{P}_{\Lambda}\mathscr{T}(s)\xi ds
 \\&\hspace{3cm}+\int_{0}^{t}\int_{0}^{t-s}T(t-s-\tau)\mathscr{P}_{\Lambda}\mathscr{T}(\tau)\mathscr{M}_n(X^n(s)) d\tau dW(s).
\end{align}
By a change of variables and Fubini's stochastic theorem,
\begin{align*}
\int_{0}^{t}\int_{0}^{t-s}T(t-s-\tau)&\mathscr{P}_{\Lambda}\mathscr{T}(\tau)\mathscr{M}_n(X^n(s)) d\tau dW(s)\cr
& =\int_{0}^{t}\int_{0}^{t-s}T(t-s-\tau)\mathscr{P}nR(n,\mathscr{A})\mathscr{T}(\tau)\mathscr{M}(X^n(s)) d\tau dW(s)\cr
&=\int_{0}^{t}\int_{s}^{t}T(t-\si)\mathscr{P}nR(n,\mathscr{A})\mathscr{T}(\si-s)\mathscr{M}(X^n(s)) d\si dW(s)
\\&=\int_{0}^{t}T(t-\si)\mathscr{P}nR(n,\mathscr{A})\int^{\si}_{0}\mathscr{T}(\si-s)\mathscr{M}(X^n(s)) dW(s) d\si
\\&=\int_{0}^{t}T(t-\si)\mathscr{P}_\Lambda nR(n,\mathscr{A})\int^{\si}_{0}\mathscr{T}(\si-s)\mathscr{M}(X^n(s)) dW(s) d\si
\\&=\int_{0}^{t}T(t-\si)\mathscr{P}_\Lambda \int^{\si}_{0}\mathscr{T}(\si-s)nR(n,\mathscr{A})\mathscr{M}(X^n(s)) dW(s) d\si
\\&=\int_{0}^{t}T(t-\si)\mathscr{P}_\Lambda \int^{\si}_{0}\mathscr{T}(\si-s)\mathscr{M}_n(X^n(s)) dW(s) d\si.
\end{align*}
Now replacing this expression in \eqref{GGGG} and using \eqref{semigroup-S}, the formula \eqref{VCF-MV0} follows.
\end{proof}
\begin{theorem}\label{new variation}
Assume that $\mathscr{P}$ is an admissible observation operator for $A$  and let $X:[0,+\infty)\times \Om\to H$ be the process given in \eqref{sol-mild}. For any
 $ \xi \in L^{2}_{\mathcal{F}_{0}}(\Omega,H)$,
\begin{align}\label{VCF-MV} X(t)=T(t)\xi+\int_{0}^{t}T(t-s)\mathscr{P}_{\Lambda}X(s)ds+\int_{0}^{t}T(t-s)\mathscr{M}(X(s))dW(s)\end{align}
for any $t\ge 0$ and $\P$-a.s.
\end{theorem}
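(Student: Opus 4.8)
The plan is to obtain \eqref{VCF-MV} by letting $n\to+\infty$ in the identity \eqref{VCF-MV0} of Lemma \ref{new variation0}, exploiting the convergences of the approximating sequence $(X^n)_{n>\om_0(\mathscr{A})}$ collected in Lemma \ref{technical-lemma}. Fix $\al>0$, $\xi\in L^2_{\calF_0}(\Om,H)$ and $t\in[0,\al]$, and let $M\ge 1$, $\delta>\om_0(A)$ be such that $\|T(\si)\|\le M e^{\delta\si}$ for all $\si\ge 0$, exactly as in the proof of Theorem \ref{thm-existence}. All the limits below are taken in $L^2(\Om,H)$.

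For the left-hand side of \eqref{VCF-MV0}, Lemma \ref{technical-lemma} gives $X^n(t)\to X(t)$ in $L^2(\Om,H)$ (in fact uniformly for $t\in[0,\al]$). For the drift term, since $\mathscr{P}_\Lambda X^n(s)$ and $\mathscr{P}_\Lambda X(s)$ belong to $H$ (recall $X^n(s),X(s)\in D(\mathscr{P}_\Lambda)$ a.e., $\P$-a.s.), the Cauchy--Schwarz inequality applied to the Bochner integral together with the bound on $\|T(\cdot)\|$ yields
\begin{align*}
\E\Bigl\|\int_0^t T(t-s)\bigl(\mathscr{P}_\Lambda X^n(s)-\mathscr{P}_\Lambda X(s)\bigr)ds\Bigr\|^2
\le \al\, M^2 e^{2|\delta|\al}\;\E\int_0^\al\bigl\|\mathscr{P}_\Lambda X^n(s)-\mathscr{P}_\Lambda X(s)\bigr\|^2 ds,
\end{align*}
which tends to $0$ by \eqref{PPP}. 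For the stochastic term, It\^o's isometry gives
\begin{align*}
\E\Bigl\|\int_0^t T(t-s)\bigl(\mathscr{M}_n(X^n(s))-\mathscr{M}(X(s))\bigr)dW(s)\Bigr\|^2
\le M^2 e^{2|\delta|\al}\;\E\int_0^\al\bigl\|\mathscr{M}_n(X^n(s))-\mathscr{M}(X(s))\bigr\|^2 ds,
\end{align*}
which tends to $0$ by \eqref{AAAAA} (established in the course of the proof of Lemma \ref{technical-lemma}).

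Combining these three facts, each term on the right-hand side of \eqref{VCF-MV0} converges in $L^2(\Om,H)$ to the corresponding term of \eqref{VCF-MV}, while the left-hand side converges to $X(t)$; by uniqueness of $L^2$-limits, \eqref{VCF-MV} holds as an identity in $L^2(\Om,H)$ for every $t\in[0,\al]$, hence $\P$-a.s. for each such $t$. Since $\al>0$ was arbitrary, \eqref{VCF-MV} holds for all $t\ge 0$ and $\P$-a.s., which is the assertion. There is no genuine obstacle here: the whole content sits in Lemma \ref{technical-lemma} and Lemma \ref{new variation0}, and the only point needing a little care is that the resolvent truncation $n R(n,\mathscr{A})$ hidden inside $\mathscr{M}_n$ is made to disappear solely through the strong convergence \eqref{AAAAA}, rather than through any separate algebraic manipulation.
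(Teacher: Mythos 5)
Your proposal is correct and follows essentially the same route as the paper: the paper also passes to the limit in the identity of Lemma \ref{new variation0}, bounding the drift term by H\"older's inequality and the stochastic term by It\^{o}'s isometry, and then invoking \eqref{PPP}, \eqref{AAAAA} and the convergence $X^n\to X$ from Lemma \ref{technical-lemma} (the only cosmetic difference is that the paper names the right-hand side $Z(t)$ and shows $\E\|X^n(t)-Z(t)\|^2\to 0$ before identifying $Z$ with $X$, whereas you converge term by term and use uniqueness of $L^2$-limits).
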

\begin{proof}
Let $X(\cdot)$ and $X^n(\cdot),\;n>\om_0(A),$  be the processes  given by \eqref{suite} and \eqref{sol-mild}, respectively.
Let $\al>0$ be arbitrary and $ \xi \in L^{2}_{\mathcal{F}_{0}}(\Omega;H)$.
According to Proposition
\ref{prop-T-L-W}, the following process
\begin{align*}
Z(t):=T(t)\xi+\int_{0}^{t}T(t-s)\mathscr{P}_{\Lambda}X(s)ds+\int_{0}^{t}T(t-s)\mathscr{M}(X(s))dW(s),
\end{align*}
for $t\ge 0,\; \P-a.s.,$ is well defined. Let $\delta>\om_0(A)$ and $M\ge 1$ such that $\|T(t)\|\le M e^{\delta t}$
for any $t\ge 0$. By using the same arguments as in the proof of Lemma \ref{new variation0}, H\"older's inequality and It\^{o}'s isometry, we obtain
\begin{align*}
&\E \|X^n(t)-Z(t)\|^2 \cr &\le c(\al)\left( \E \int^\al_0 \|\mathscr{P}_\Lambda X^n(s)-\mathscr{P}_\Lambda X(s)\|^2ds+
\E \int^\al_0\|\mathscr{M}_n(X^n(s))-\mathscr{M}(X(s))\|^2ds\right)
\end{align*}
for any $t\in [0,\al]$, where $c(\al)=2 M e^{2|\delta|\al}$ is a constant. According to \eqref{PPP} and \eqref{AAAAA},
\begin{align*}
\lim_{n\to +\infty}\E \|X^n(t)-Z(t)\|^2=0
\end{align*}
Thus, by Lemma \ref{technical-lemma}, we obtain $X(t)=Z(t)$ for any $t\in [0,\al]$ and $\P$-a.s. As $\al$ is arbitrary we have $X(\cdot)=Z(\cdot)$ on $[0,+\infty)$, $\P$-a.s.
This ends the proof.
\end{proof}
\subsection{Perturbed stochastic system with boundary control and point observation}
Assume that there exists a Hilbert space $\mathscr{Z}$ such that $\mathscr{Z}\subset H$ with continuous and dense embedding. Let $\sigma: \mathscr{Z}\to U$ and $M:\mathscr{Z}\to \mathscr{Y}$ be linear operators such that $\si$ is surjective and $M$ not necessary closed. In addition consider a closed "maximal" operator $A_m:\mathscr{Z}\to H$ and $K:\mathscr{Z}\to H$ be a linear operator (not necessary closed). We consider the perturbed stochastic system with boundary control and point observation defined by
\begin{align}\label{P-ABC}
\begin{cases}
dX(t)=((A_m+K)X(t))dt+\mathscr{M}(X(t))dW(t),& t>0,\quad X(0)=\xi,\cr \si X(t)=u(t),& t\ge 0,\cr Y(t)=M X(t),& t\ge 0.
\end{cases}
\end{align}
We assume that $A=(A_m)_{|D(A)}$, where $D(A)=\ker\si$ is the generator of a $C_0$-semigroup $\T=(T(t))_{t\ge 0}$ on $H$. Without loss of generality, we assume that $0\in\rho(A)$. Let $D\in\calL(U,H)$ be the Dirichlet operator associated with $\si$ and $A_m$, which is the inverse of the restection of $\si$ to $\ker(A_m)$ (see e.g. \cite{Grei}). We define the following operators  $B=(-A_{-1})D\in\calL(U,H_{-1})$, $\mathscr{P}=K_{|D(A)}\in\calL(D(A),H),$ and $C:=M_{|D(A)}\in\calL(D(A),\mathscr{Y})$. Moreover, we denote by $\mathscr{P}_\Lambda$ and $C_\Lambda$ the Yosida extensions of $\mathscr{P}$ and $C$ for $A,$ respectively.

The following result shows the well-posedness of the system \eqref{P-ABC}.
\begin{theorem}\label{bounadry-theorem}
Assume that the systems $(A,B,\mathscr{P})$ and $(A,B,C)$ are regular. Then the system \eqref{P-ABC} has a unique mild solution $X(\cdot)\in\calC_{\F}(0,+\infty;L^2(\Om,H))$ such that $X(t)\in D(\mathscr{P}_\Lambda)\cap D(C_\Lambda)$ for a.e. $t>0$ and
\begin{align}\label{C-estimate}
\|C_\Lambda X(\cdot)\|_{L^2_{\F}(0,\al;\mathscr{Y})}\le c \left(\|\xi\|_{L^2_{\calF_0}(\Om,H)}+\|u\|_{L^2_{\F}(0,\al;U)}\right)
\end{align}
for any $\xi\in L^2_{\calF_0}(\Om,H)$ and $u\in L^2_{\F}(0,\al;U)$, where $c:=c(\al)>0$ is a constant.
\end{theorem}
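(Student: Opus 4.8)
The plan is to reformulate the boundary control system \eqref{P-ABC} as a distributed-control perturbed Cauchy problem and then stitch together Theorem \ref{thm-existence}, Theorem \ref{new variation} and Theorem \ref{ABCW-wellposed}. First I would recall the standard boundary-to-distributed reduction: since $D\in\calL(U,H)$ is the Dirichlet operator and $B=(-A_{-1})D$, a process $X(\cdot)$ satisfies $\si X(t)=u(t)$ together with the first line of \eqref{P-ABC} in the mild sense precisely when it satisfies the distributed equation
\begin{align*}
dX(t)=\bigl((A_{-1}+\mathscr{P})X(t)+Bu(t)\bigr)dt+\mathscr{M}(X(t))dW(t),\quad X(0)=\xi,
\end{align*}
interpreted through the appropriate variation-of-constants formula; this is the stochastic analogue of the deterministic reduction used in \cite{Grei} and in \cite{Ha-Id-IMA}, and the point observation $Y(t)=MX(t)=CX(t)$ then has to be read via the Yosida extension $C_\Lambda$. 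So the real content is the well-posedness of the $\mathscr{P}$-perturbed, $B$-controlled stochastic system on $X,U,\mathscr{Y}$, with the regularity hypotheses on $(A,B,\mathscr{P})$ and $(A,B,C)$ supplying the required admissibility.

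The key steps, in order, are as follows. \emph{Step 1:} Since $(A,B,\mathscr{P})$ is regular, $\mathscr{P}$ is an admissible observation operator for $A$, so by Theorem \ref{M-V-thm} the operator $\mathscr{A}=A+\mathscr{P}$ with $D(\mathscr{A})=D(A)$ generates a $C_0$-semigroup $\mathscr{T}$, and by Remark \ref{remark-section3} both $C$ (when admissible for $A$) and $\mathscr{P}$ remain admissible observation operators for $\mathscr{A}$, with the Yosida extensions unchanged on the relevant domains. \emph{Step 2:} Establish that $B$ is an admissible control operator for $\mathscr{A}$: this is classical for regular deterministic systems (admissibility of a control operator is preserved under admissible observation perturbations of the generator — this is precisely the Weiss/Staffans feedback-perturbation result, and it is what makes $(A,B,\mathscr{P})$ and $(A,B,C)$ regular systems usable simultaneously), so one can invoke Theorem \ref{thm-existence} \emph{with $A$ replaced by $\mathscr{A}$} to get a unique mild solution $X(\cdot)\in\calC_{\F}(0,+\infty;L^2(\Om,H))$ of
\begin{align*}
X(t)=\mathscr{T}(t)\xi+\int_0^t \mathscr{T}_{-1}(t-s)Bu(s)ds+\int_0^t \mathscr{T}(t-s)\mathscr{M}(X(s))dW(s).
\end{align*}
\emph{Step 3:} Show this $X(\cdot)$ is the desired mild solution of \eqref{P-ABC} by applying the new variation-of-constants formula of Theorem \ref{new variation} to pull the $\mathscr{P}$-perturbation back to the original semigroup $\T$: decompose $X=X(\cdot;\xi,0)+\Phi^W_\cdot u$ as in Theorem \ref{thm-existence}, apply Theorem \ref{new variation} to the $\xi$-part (which solves exactly \eqref{perturbed-S-Cauchy}), and handle the controlled part through the regularity of $(A,B,\mathscr{P})$ via Theorem \ref{Weiss-repres} together with Proposition \ref{fondamental-lemma}, obtaining $X(t)\in D(\mathscr{P}_\Lambda)$ for a.e. $t$ and the representation $X(t)=T(t)\xi+\int_0^t T_{-1}(t-s)Bu(s)ds+\int_0^t T(t-s)\mathscr{P}_\Lambda X(s)ds+\int_0^t T(t-s)\mathscr{M}(X(s))dW(s)$. \emph{Step 4:} For the observation estimate \eqref{C-estimate}: since $(A,B,\mathscr{P})$ and $(A,B,C)$ are regular and $\mathscr{A}=A+\mathscr{P}$ is an admissible-observation perturbation, the triple $(\mathscr{A},B,C)$ is again regular (Weiss's feedback theory for regular systems), so Theorem \ref{ABCW-wellposed} applied with $A$ replaced by $\mathscr{A}$ gives exactly $X(t)\in D(\tilde C_\Lambda)$ a.e. and $\|\tilde C_\Lambda X(\cdot)\|_{L^2_{\F}(0,\al;\mathscr{Y})}\le c(\|\xi\|+\|u\|)$; by Remark \ref{remark-section3}, $\tilde C_\Lambda = C_\Lambda$ on $D(C_\Lambda)\cap D(\mathscr{P}_\Lambda)$, and since Step 3 gives $X(t)\in D(\mathscr{P}_\Lambda)$, the estimate transfers to $C_\Lambda X(\cdot)$ as stated.

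The main obstacle I expect is Step 2 together with the "$(\mathscr{A},B,C)$ is regular" claim in Step 4 — i.e. verifying that the regularity of the two triples $(A,B,\mathscr{P})$ and $(A,B,C)$ really does imply that $B$ is admissible for $\mathscr{A}=A+\mathscr{P}$ and that $C$ is a regular observation operator for $\mathscr{A}$ with the same Yosida extension. In the deterministic literature this is exactly the content of closed-loop well-posedness under admissible observation feedback (Weiss--Staffans); the identity $R(\la,\mathscr{A}_{-1})B = R(\la,A_{-1})B + R(\la,\mathscr{A}_{-1})\mathscr{P}_\Lambda R(\la,A_{-1})B$ together with Remark \ref{nice-rem} (Range$(R(\la,A_{-1})B)\subset D(C_\Lambda)\cap D(\mathscr{P}_\Lambda)$ by regularity of both triples) is the algebraic heart of it, and one must check the stochastic convolution terms remain controlled — but this is precisely where Proposition \ref{fondamental-lemma} and the estimates of Theorem \ref{M-V-thm} do the work, so no genuinely new estimate is needed beyond bookkeeping. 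Everything else (the boundary-to-distributed reduction, the Gronwall/Itô isometry arguments) is routine given the machinery already assembled in Sections \ref{Section2} and \ref{sec:new-VCF}.
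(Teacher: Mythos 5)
Your proposal is correct and follows essentially the same route as the paper: rewrite \eqref{P-ABC} via $A_m=A_{-1}+B\si$ on $\mathscr{Z}$ as the distributed equation \eqref{Distr}, use the regularity of $(A,B,\mathscr{P})$ to get that $B$ is admissible for $\mathscr{A}=A+\mathscr{P}$ and that $(\mathscr{A},B,\mathscr{P})$, $(\mathscr{A},B,C)$ are regular (the paper simply cites \cite{MP-10} for the perturbation facts you attribute to Weiss--Staffans feedback theory), and then apply Theorem \ref{ABCW-wellposed} with $\mathscr{A}$ in place of $A$ together with Remark \ref{remark-section3} to identify the Yosida extensions. Your Step 3 (pulling the solution back to the semigroup $\T$ via Theorem \ref{new variation}) is not needed for \eqref{C-estimate} and is in fact the content of the paper's separate Proposition \ref{BP}, whose proof there is more involved than your sketch suggests; the membership $X(t)\in D(\mathscr{P}_\Lambda)$ already follows from applying Theorem \ref{ABCW-wellposed} to the regular system $(\mathscr{A},B,\mathscr{P})$.
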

\begin{proof}
According to \cite{Grei}, we have $\mathscr{Z}=D(A)\oplus\ker(A_m)$. This implies that $A_m$ coincides with $A_{-1}+B\si$ on $\mathscr{Z}$. Thus the stochastic equation in \eqref{P-ABC} takes the following form
\begin{align}\label{Distr}
dX(t)=\left[(A+K)X(t)+Bu(t)\right]dt+\mathscr{M}(X(t))dW(t),\quad t>0,\quad X(0)=\xi.
\end{align}
Let $\mathscr{A}$ be the generator of the semigroup $(\mathscr{T}(t))_{t\ge 0}$ as in Theorem \ref{M-V-thm}. According to \cite{MP-10}, the fact that system $(A,B,\mathscr{P})$ is regular implies that $B$ is an admissible control operator for $\mathscr{A}$. Thus the equation \eqref{Distr} has a unique mild solution $X(\cdot)\in \calC_{\F}(0,+\infty,L^2(\Om,H))$ satisfying
\begin{align*}
X(t)=\mathscr{T}(t)\xi+\int^t_0\mathscr{T}_{-1}(t-s)Bu(s)ds+\int^t_0 \mathscr{T}(t-s)\mathscr{M}(X(s))dW(s)
\end{align*}
for any $t\ge 0$ and $u\in L^2_{\F}(0,+\infty;U)$. According to \cite{MP-10}, the systems $(\mathscr{A},B,\mathscr{P})$ and $(\mathscr{A},B,C)$ are regular. Thus by combining  Remark \ref{remark-section3} and Theorem \ref{ABCW-wellposed}, we have $X(t)\in D(\mathscr{P}_\Lambda)\cap D(C_\Lambda)$ for a.e. $t>0$ and the estimation \eqref{C-estimate} holds. Hence the system \eqref{P-ABC} is well-posed.
\end{proof}
\begin{example}
Let $\mathscr{O}$, $\Gamma_0$ and $\Gamma_1$ as in Example \ref{Schro-regular} and let $k:\overline{\mathscr{O}}\times \partial\mathscr{O}\to \R$ be a continuous function. Consider the following Schr\"odinger stochastic input-output system with non-local integral term:
\begin{align}\label{chro1}
\begin{cases}
dX(t,x)=\left(i\Delta X(t,x)+i\displaystyle\int_{\Gamma_1}k(x,y)X(t,y)dy\right)dt\cr \hspace{5cm}+q(x)X(t,x)dW(t),
& t> 0,\;x\in\mathscr{O},\cr X(0,x)=\xi(x),& x\in \mathscr{O},\cr X(t,x)=0,
& t\ge 0,\; x\in \Gamma_0,\cr \frac{\partial}{\partial\nu}X(t,x)=u(t,x), & t\ge 0,\; x\in\Gamma_1,\cr
Y(t,x)=iX(t,x),& t\ge 0,\; x\in \Gamma_1,
\end{cases}
\end{align}
where
$\nu$ is the unit normal of $\partial\mathscr{O}$ pointing towards the exterior of $\mathscr{O}$, and $q\in L^\infty(\mathscr{O})$. In order to reformulate the system \eqref{chro1} as the abstract system \eqref{P-ABC}, we use the following notation: Let the state space $H=L^2(\mathscr{O})$, the intermediate space $\mathscr{Z}=H^2(\mathscr{O}),$ and the control space $U=L^2(\partial\mathscr{O})$. In addition, we consider
\begin{align*}
A=i\Delta,\quad D(A)=\left\{g\in \mathscr{Z}: g_{|\Gamma_0}=\frac{\partial g}{\partial\nu}|_{\Gamma_1}=0\right\}.
\end{align*}
This operator is skew-adjoint and generates an unitary group $\T$ on $H$. We consider the operators $\mathscr{M}:H\to H,$ $G:\mathscr{Z} \to U$ and $M:\mathscr{Z} \to U$ defined by
\begin{align*}
\mathscr{M}h=q(\cdot)h,\quad G g=\begin{cases} g,& \text{on}\;\Gamma_0,\cr
\frac{\partial g}{\partial\nu},&\text{on}\;\Gamma_1,
\end{cases}\qquad M g=\begin{cases} 0,& \text{on}\;\Gamma_0,\cr
ig,&\text{on}\;\Gamma_1.
\end{cases}
\end{align*}
On the other hand, we define an operator $K:\mathscr{Z}\to H$ by
\begin{align*}
K:=\mathbb{L} M\quad\text{with}\quad (\mathbb{L} g)(x)=\int_{\partial\mathscr{O}}k(x,y)g(y)dy.
\end{align*}
Let $D\in\calL(U,H)$ be the Dirichlet operator associated with the system \eqref{chro1}. This is
\begin{align*}
h=Dv \Longleftrightarrow \Delta h=0,\quad h_{|\Gamma_0}=0,\quad \frac{\partial h}{\partial\nu}{|\Gamma_1}=v.
\end{align*}
We set $\A=\Delta$ with domain $D(\A)=D(A)$. Then $\A$ is a generator of a $C_0$-semigroup on $H$. We denote by $\A_{-1}$ its extension to $H_{-1},$ which is a generator on $H_{-1}$. We denote by $B=-i\A_{-1}D$, $C=M_{|D(A)}$ and $\mathscr{P}:=K_{|D(A)}=\L C$. A routine calculus as in \cite{LT} and \cite{Guo-sha}, shows that the system $(A,B,C)$ is regular, and hence the system $(A,B,\mathscr{P})$ is also regular. Now Theorem \ref{bounadry-theorem} implies that the stochastic system \eqref{chro1} is well-posed.
\end{example}
The following result gives an expression of the mild solution of the system \eqref{P-ABC} using the semigroup $\T$.
\begin{proposition}\label{BP}
Assume that the systems $(A,B,\mathscr{P})$ and $(A,B,C)$ are regular and let $X(\cdot)$ be the mild solution of the boundary stochastic system \eqref{P-ABC}. Then
\begin{align}\label{New-formula-B}
\begin{split}
X(t)=T(t)\xi+\int^t_0 T(t-s)\mathscr{P}_{\Lambda}X(s)ds&+\int^t_0 T_{-1}(t-s)Bu(s)ds\cr &+\int^t_0 T(t-s)\mathscr{M}(X(s))dW(s)
\end{split}
\end{align}
for any $t\ge 0,$ $\xi\in L^2_{\calF_0}(\Om,H)$ and $u\in L^2_{\F}(0,+\infty;U)$.
\end{proposition}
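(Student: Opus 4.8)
The plan is to take the mild-solution representation of \eqref{P-ABC} produced in the proof of Theorem~\ref{bounadry-theorem} and convert it, term by term, back to the unperturbed semigroup $\T$. Recall from that proof that \eqref{P-ABC} is equivalent to the distributed equation \eqref{Distr}, that $(A,B,\mathscr{P})$ being regular forces $B$ to be an admissible control operator for $\mathscr{A}=A+\mathscr{P}$ (by \cite{MP-10}), and that consequently
\[
X(t)=\mathscr{T}(t)\xi+\psi(t)+I(t),\qquad \psi(t):=\int_0^t\mathscr{T}_{-1}(t-s)Bu(s)ds,\quad I(t):=\int_0^t\mathscr{T}(t-s)\mathscr{M}(X(s))dW(s).
\]
I would rewrite each of the three summands using the fact that $\mathscr{T}$ is a regular (admissible) perturbation of $\T$, and then reassemble, the key bookkeeping being that $\mathscr{P}_\Lambda$ applied separately to $\mathscr{T}(t)\xi$, $\psi(t)$ and $I(t)$ sums back up to $\mathscr{P}_\Lambda X(t)$.

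For the first summand, \eqref{semigroup-S} of Theorem~\ref{M-V-thm} gives, $\P$-almost surely, both that $\mathscr{T}(s)\xi\in D(\mathscr{P}_\Lambda)$ for a.e.\ $s$ and that $\mathscr{T}(t)\xi=T(t)\xi+\int_0^t T(t-s)\mathscr{P}_\Lambda\mathscr{T}(s)\xi\,ds$. For the second summand, the triple $(\mathscr{A},B,\mathscr{P})$ is regular by \cite{MP-10}, and by Remark~\ref{remark-section3} the Yosida extension of $\mathscr{P}$ for $\mathscr{A}$ coincides with $\mathscr{P}_\Lambda$; hence Theorem~\ref{Weiss-repres} applied to $(\mathscr{A},B,\mathscr{P})$ gives $\psi(t)\in D(\mathscr{P}_\Lambda)$ for a.e.\ $t$, while, $\psi$ being the mild solution of $\dot\psi=\mathscr{A}\psi+Bu$, $\psi(0)=0$, the deterministic variation of constants formula for regular perturbations (as in \cite{Hadd-SF}, \cite{Ha-Id-SCL}) yields $\psi(t)=\int_0^t T_{-1}(t-s)Bu(s)ds+\int_0^t T(t-s)\mathscr{P}_\Lambda\psi(s)ds$ pathwise in $\om$.

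The third summand is the crux, and here I would run the Yosida-approximation scheme used to prove Theorem~\ref{new variation}. Set $\mathscr{M}_n=nR(n,\mathscr{A})\mathscr{M}$ and let $X^n(t)=\mathscr{T}(t)\xi+\psi(t)+\int_0^t\mathscr{T}(t-s)\mathscr{M}_n(X^n(s))dW(s)$ be the mild solution of \eqref{Distr} with $\mathscr{M}$ replaced by $\mathscr{M}_n$. By Proposition~\ref{fondamental-lemma} and Remark~\ref{remark-section3} one obtains $X^n\to X$ and $\mathscr{P}_\Lambda X^n\to\mathscr{P}_\Lambda X$ in $L^2_{\F}(0,\al;H)$ for every $\al>0$, exactly as in Lemma~\ref{technical-lemma}; and since $\mathscr{M}_n$ maps into $D(\mathscr{A})$, a stochastic Fubini argument together with \eqref{semigroup-S} and a change of variables turns $\int_0^t\mathscr{T}(t-s)\mathscr{M}_n(X^n(s))dW(s)$ into $\int_0^t T(t-s)\mathscr{M}_n(X^n(s))dW(s)+\int_0^t T(t-s)\mathscr{P}_\Lambda\bigl(\int_0^s\mathscr{T}(s-r)\mathscr{M}_n(X^n(r))dW(r)\bigr)ds$, precisely as in Lemma~\ref{new variation0}. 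Passing to the limit $n\to\infty$ then gives $I(t)\in D(\mathscr{P}_\Lambda)$ for a.e.\ $t$ and $I(t)=\int_0^t T(t-s)\mathscr{M}(X(s))dW(s)+\int_0^t T(t-s)\mathscr{P}_\Lambda I(s)ds$.

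It then remains to add the three identities. Since $\mathscr{T}(s)\xi$, $\psi(s)$ and $I(s)$ each lie in $D(\mathscr{P}_\Lambda)$ for a.e.\ $s$, their sum $X(s)=\mathscr{T}(s)\xi+\psi(s)+I(s)$ does too, with $\mathscr{P}_\Lambda X(s)=\mathscr{P}_\Lambda\mathscr{T}(s)\xi+\mathscr{P}_\Lambda\psi(s)+\mathscr{P}_\Lambda I(s)$ by linearity of $\mathscr{P}_\Lambda$ on its domain; collecting the three terms of the form $\int_0^t T(t-s)\mathscr{P}_\Lambda(\cdot)\,ds$ then produces precisely $\int_0^t T(t-s)\mathscr{P}_\Lambda X(s)ds$, and \eqref{New-formula-B} follows. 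I expect the third step to be the main obstacle: because $\mathscr{P}$ and its Yosida extension $\mathscr{P}_\Lambda$ are neither closed nor closable, one cannot pull $\mathscr{P}_\Lambda$ through the It\^{o} integral directly, so the conversion of the stochastic convolution must be obtained through the regularised operators $\mathscr{M}_n$ and a careful application of the stochastic Fubini theorem, exactly as in the proof of Theorem~\ref{new variation}; a secondary point requiring care is the correct invocation of \cite{MP-10} to ensure that $B$ is admissible for $\mathscr{A}$ and that $\psi$ admits the stated deterministic variation of constants representation.
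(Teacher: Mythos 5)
Your argument is essentially correct, but it takes a genuinely different route from the paper. The paper does not decompose the $\mathscr{T}$-based mild solution term by term on $H$; instead it lifts \eqref{Distr} to the product space $\mathfrak{X}$ via the Lax--Phillips generator $\mathfrak{A}$ of \eqref{lax-gen}, shows that $\calK=\left(\begin{smallmatrix}K&0\\0&0\end{smallmatrix}\right)$ is an admissible observation operator for $\mathfrak{A}$ (using the regularity of $(A,B,\mathscr{P})$ to estimate $K(T(t)x+\Phi_t g)$), applies Theorem \ref{new variation} wholesale on $\mathfrak{X}$, and then identifies $\calK_\Lambda\varrho(t)=\left(\begin{smallmatrix}\mathscr{P}_\Lambda X(t)\\0\end{smallmatrix}\right)$ through the transfer function $\calG(\la)=KR(\la,A_{-1})B\to0$, first for inputs $u$ with compact support and then by a density argument. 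Your route stays on the original state space: the semigroup term is converted via \eqref{semigroup-S}, the stochastic convolution by re-running the Yosida approximation of Lemma \ref{new variation0} and Theorem \ref{new variation} (this is sound; the extra fixed forcing term $\psi$ does not disturb the Gronwall and dominated-convergence steps), and the control term by the deterministic perturbed variation of constants formula, after which linearity of $\mathscr{P}_\Lambda$ on its domain reassembles the three $\mathscr{P}_\Lambda$-integrals. What the paper's route buys is that Theorem \ref{new variation} is used as a black box and the only additional deterministic input is the transfer-function limit; what your route buys is that it avoids the product-space lift and the identification of $\calK_\Lambda$, at the price of redoing the approximation for the stochastic convolution. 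The one step you should justify more carefully is the identity $\psi(t)=\int_0^t T_{-1}(t-s)Bu(s)ds+\int_0^t T(t-s)\mathscr{P}_\Lambda\psi(s)ds$ together with $\psi(t)\in D(\mathscr{P}_\Lambda)$ for the Yosida extension relative to $A$: this is not contained in \cite{Hadd-SF} or \cite{Ha-Id-SCL}, which concern the perturbed semigroup and admissibility only. It does hold, but it rests on the regular-perturbation resolvent identity $R(\la,\mathscr{A}_{-1})B=R(\la,A_{-1})B+R(\la,A)\mathscr{P}_\Lambda R(\la,\mathscr{A}_{-1})B$ (as in \cite{MP-10}, \cite{HaddManzoRhandi}) combined with a Laplace-transform and density argument in $u$ --- essentially the same compact-support/density step the paper performs on the lifted problem --- so it should be proved or cited precisely rather than attributed to the semigroup references.
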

\begin{proof}
We proceed by approximation. Let $(\mathfrak{A},D(\mathfrak{A}))$ be the operator defined in \eqref{lax-gen}. Remak that if $\left(\begin{smallmatrix}x\\g\end{smallmatrix}\right)\in D(\mathfrak{A})$, in particular we have $A_{-1}+Bg(0)=A_{-1}(x-Dg(0))\in H$. Thus $x-Dg(0)\in D(A)\subset \mathscr{Z},$ and then $x\in \mathscr{Z}$. Define
\begin{align*}
\calK:=\begin{pmatrix}K& 0\\ 0&0\end{pmatrix}:D(\mathfrak{A})\to \mathfrak{X},\qquad \varrho(t)=\left(\begin{smallmatrix}X(t)\\S^U(t)u\end{smallmatrix}\right).
\end{align*}
As in the proof of Theorem \ref{thm-existence}, the equation \eqref{Distr} is reformulated as the following perturbed stochastic evolution equation on $\mathfrak{X,}$
\begin{align}\label{Distr-Big}
d\varrho(t)=\left[(\mathfrak{A}+\calK)\varrho(t)\right]dt+\mathscr{B}(\varrho(t))dW(t),\quad t>0,\quad \varrho(0)=\left(\begin{smallmatrix}\xi\\u\end{smallmatrix}\right).
\end{align}
Next we prove that $\calK$ is an admissible observation operator for $\mathfrak{A}$. In fact, by taking $\left(\begin{smallmatrix}x\\g\end{smallmatrix}\right)\in D(\mathfrak{A})$ and using \eqref{lax-sem}, we obtain
\begin{align}\label{nnn}
\int^{t_0}_0\left\|\calK\mathfrak{T}(t)\left(\begin{smallmatrix}x\\g\end{smallmatrix}\right)\right\|^2_{\mathfrak{X}}dt =\int^{t_0}_0 \|K(T(t)x+\Phi_t g)\|^2_Hdt
\end{align}
where $(\Phi_t)_{t\ge 0}$ is the family of control maps associated with $A$ and $B$ (see \eqref{control-maps}). According to \cite[Lemma 3.6]{HaddManzoRhandi}, the operator $K$ coincides with $\mathscr{P}_\Lambda$ on $\mathscr{Z}$. By assumption $(A,B,P)$ is regular, so $T(t)x\in D(\mathscr{P}_\Lambda)$ and $\Phi_t g\in D(\mathscr{P}_\Lambda)$ for a.e. $t>0,$ and
\begin{align*}
\int^{t_0}_0 \|\mathscr{P}_\Lambda T(t)x\|^2_H\le c_1 \|x\|^2\quad\text{and}\quad \int^{t_0}_0 \|\mathscr{P}_\Lambda \Phi_t g\|^2_H\le c_2 \|g\|^2_{L^2([0,t_0],U)}
\end{align*}
for some constants $c_1:=c_1(t_0)>0$ and  $c_2:=c_2(t_0)>0$. Thus by \eqref{nnn}, $\calK$ is an admissible observation operator for $\mathfrak{A}$. Let $\calK_\Lambda$ be the Yosida extension of $\calK$ for $\mathfrak{A}$. In view of Theorem \ref{new variation}, the problem \eqref{Distr-Big} has a unique mild solution $\varrho(t)\in D(\calK_\Lambda)$ for a.e. $t\in (0,\infty)$, which satisfies
\begin{align}\label{mmm}
\varrho(t)=\mathfrak{T}(t)\left(\begin{smallmatrix}\xi\\u\end{smallmatrix}\right)+\int^t_0 \mathfrak{T}(t-s)\calK_{\Lambda}\varrho(s)ds+
\int^t_0 \mathfrak{T}(t-s)\mathscr{B}(\varrho(s))dW(s)
\end{align}
for any $t\ge 0$, $\xi\in L^2_{\calF_0}(\Om,H)$ and $u\in L^2_{\F}(0,+\infty;U)$. Denote by $\calC_{\F}^c(0,+\infty;L^2(\Om,U))$ the space of functions in $\calC_{\F}(0,+\infty;L^2(\Om,U))$ with compact support. First, we assume that $u\in \calC_{\F}^c(0,+\infty;L^2(\Om,U))$. From the first part of proof we have $\varrho(t)\in D(\mathscr{P}_\Lambda)\times \calC_{\F}^c(0,+\infty;L^2(\Om,U))$. Now for $\la>0$ sufficiently large, we have
\begin{align}\label{kkak}
\calK \la R(\la,\mathfrak{A})\varrho(t)=\begin{pmatrix} \mathscr{P}\la R(\la,A)X(t)+\la \calG(\la)\hat{u}(\la)\\ 0\end{pmatrix}
\end{align}
where $\calG(\la)=KR(\la,A_{-1})B$ is the transfer function of the system $(A,B,P)$ and $\hat{u}$ is the Laplace transform of $t\mapsto u(t)$ $\P$-a.s. We know that $\calG(\la)\to 0$ as $\la\to+\infty$. Moreover, using the fact that $u$ has a compact support, clearly  we have $\la \hat{u}(\la)\to 0$. By letting $\la \to+\infty$ in \eqref{kkak}, we obatin
\begin{align}\label{jjj}
\calK_\Lambda \varrho(t)=\begin{pmatrix} \mathscr{P}_{\Lambda}X(t)\\0\end{pmatrix},
\end{align}
for a.e. $t>0$ and $\P$-a.s. Thus, by combining  \eqref{mmm} and \eqref{kkak}, the formula \eqref{New-formula-B} holds for any $u\in \calC_{\F}^c(0,+\infty;L^2(\Om,U))$. Now for $u\in L^2_{\F}(0,+\infty;U)$, there exists a sequence $(u_n)_n\subset \calC_{\F}^c(0,+\infty;L^2(\Om,U))$ such that
\begin{align*}
\lim_{n\to+\infty}\|u_n-u\|_{L^2_{\F}(0,+\infty;U)}=0.
\end{align*}
Let $X(\cdot,\xi,u)$ and $X^n(\cdot,\xi,u_n)$ be the solutions of \eqref{P-ABC} corresponding to the input functions $u$ and $u_n$, respectively. By using the equation \eqref{Distr-Big} and the fact that $(\mathfrak{A}+\calK,D(\mathfrak{A}))$ is a generator of a strongly continuous semigroup on $\mathfrak{X}$, one can see that for any $\al>0,$ and $t\ge 0,$
\begin{align*}
\E \|\varrho_n(t,\xi,u_n)-\varrho(t,\xi,u)\|^2\le c(\al) \|u_n-u\|_{L^2_{\F}(0,\al;U)}.
\end{align*}
This implies that $\E\|X^n(t)-X(t)\|^2$ geos to zero at infinity. Observe that $X^n(\cdot,\xi,u_n)$ satisfies the equation \eqref{New-formula-B}. Now by selecting
\begin{align*}
\X(t)=T(t)\xi+\int^t_0 T(t-s)\mathscr{P}_{\Lambda}X(s)ds&+\int^t_0 T_{-1}(t-s)Bu(s)ds\cr &+\int^t_0 T(t-s)\mathscr{M}(X(s))dW(s)
\end{align*}
and using a similar argument as in Theorem \ref{new variation}, we obtain $\E\|X^n(t)-\X(t)\|^2\to 0$ as $t\to +\infty$. Thus $X(t)=\X(t)$ for any $t\in [0,\al]$ and $\P$-a.s. This ends the proof.
\end{proof}

\section{Application to stochastic systems with state, input and output delays}\label{sec:delay}
Consider the stochastic delay system
\begin{align}\label{SSdelay}
\begin{cases}
dX(t)=\left(A+\displaystyle\int^0_{-r}d\mu(\theta)X(t+\theta)+\int^0_{-r}d\eta(\theta)u(t+\theta)\right)dt
\cr \hspace{7cm}+\Upsilon(X(t))dW(t),& t\ge 0,\cr X(0)=\xi,\quad X(\theta)=\varphi,\quad u(\theta)=\psi(\theta),& \theta\in [-r,0],
\cr Y(t)= \displaystyle\int^0_{-r}d\vartheta(\theta)X(t+\theta)+\int^0_{-r}d\varpi(\theta)u(t+\theta),
\end{cases}
\end{align}
where $A:D(A)\subset H\to H$ is a generator of a strongly continuous semigroup $\T=(T(t))_{t\ge 0}$ on $H,$ the operator $\Upsilon\in\calL(H),$
the family $W(t)$ as in the previous sections, $r>0$ is a real number, $\mu:[-r,0]\to \calL(H),$  $\eta:[-r,0]\to \calL(U,H),$ $\vartheta:[-r,0]\to \calL(H,U),$
and $\varpi:[-r,0]\to \calL(U),$
are functions of bounded variations with total variations $|\mu|,$ $|\eta|,$ $|\vartheta|,$ and $|\varpi|$, respectively. The initial conditions
$\xi\in L^2_{\calF_0}(\Om,H)$, $\varphi\in L^2_{\calF_0}(\Om,L^2([-r,0],H))$, and $\varphi\in L^2_{\calF_0}(\Om,L^2([-r,0],U))$ and
the segment processes $X_t:[-r,0]\to H$ and $u_t:[-r,0]\to U$ ($t\ge 0$) are defined by
\begin{align*}
X_t(\theta)=\begin{cases}X(t+\theta),& -t\le \theta\le 0,\cr \varphi(t+\theta),& -r\le \theta<-t,\end{cases},\quad
u_t(\theta)=\begin{cases}u(t+\theta),& -t\le \theta\le 0,\cr \psi(t+\theta),& -r\le \theta<-t.\end{cases}
\end{align*}

In order to reformulate the stochastic delay system  \eqref{SSdelay} as a free-delay system, we introduce the product spaces
\begin{align*}
\calH&:=H\times L^2([-r,0],H),\qquad \left\|\left(\begin{smallmatrix} x\\\phi_1\end{smallmatrix}\right)\right\|_{\calH}:= \|x\|_H+\|\phi_1\|_{L^2([-r,0],H)},\cr
\calX&:=\calH\times\times L^2([-r,0],U),\quad \left\|\left(\begin{smallmatrix} x\\\phi_1\\ \phi_2\end{smallmatrix}\right)\right\|
:=\left\|\left(\begin{smallmatrix} x\\\phi_1\end{smallmatrix}\right)\right\|_{\calH}+\|\phi_2\|_{L^2([-r,0],U)}.
\end{align*}
 Consider the following Riemann Stieltjes integrals
\begin{align*}
& R^\mu \phi_1=\int^0_{-r}d\mu(\theta)\phi_1(t+\theta),\quad R^\vartheta \phi_1=\int^0_{-r}d\vartheta(\theta)\phi_1(t+\theta),\quad \phi_1\in W^{1,2}([-r,0],H),\cr
& R^\eta \phi_2=\int^0_{-r}d\mu(\theta)\phi_2(t+\theta),\quad R^\varpi\phi_2=\int^0_{-r}d\varpi(\theta)\phi_2(t+\theta),\quad \phi_2\in W^{1,2}([-r,0],U).
\end{align*}
It is known (see e.g. \cite{Hadd-SF}) that the following operator
\begin{align*}
\calA:=\begin{pmatrix} A&R^\mu\\ 0& \frac{d}{d\si}\end{pmatrix}, \;
D(\calA)=\left\{ \left(\begin{smallmatrix} x\\\phi_1\end{smallmatrix}\right)
\in D(A)\times W^{1,2}([-r,0],H):\phi_1(0)=x\right\}
\end{align*}
generates a $C_0$-semigroup $\calT:=(\calT(t))_{t\ge 0}$ on $\calH$.
We also need the following operators
\begin{align*}
 \calA_m:=\left(
\begin{array}{c|c}
 \calA &\begin{matrix}
0\\0
\end{matrix}\\
\hline
\begin{matrix}
0&0
\end{matrix}& \frac{d}{d\theta}
\end{array}\right),\quad \mathscr{K}:=\left(
\begin{array}{c|c}
 0_{\calH}&\begin{matrix}
R^\eta\\0
\end{matrix}\\
\hline
\begin{matrix}
0&0
\end{matrix}& 0
\end{array}\right)
\end{align*}
with domain $D(\calA_m):=D(\mathfrak{A}_\mu)\times W^{1,2}([-r,0],U)$. In addition, we denote
\begin{align*}
&\mathscr{G}\left(\begin{smallmatrix} x\\\phi_1\\\phi_2\end{smallmatrix}\right):=\phi_2(0), \quad
\calM\left(\begin{smallmatrix} x\\\phi_1\\\phi_2\end{smallmatrix}\right):=R^\vartheta\phi_1+R^\varpi\phi_2,\qquad \left(\begin{smallmatrix} x\\\phi_1\\\phi_2\end{smallmatrix}\right)\in D(\calA_m)\cr &
\mathscr{M}\left(\begin{smallmatrix} x\\\phi_1\\\phi_2\end{smallmatrix}\right)=
\left(\begin{smallmatrix} \Upsilon(x)\\0\\ 0\end{smallmatrix}\right).
\end{align*}
By introducing the new state
\begin{align*}
Z(t):=\left(\begin{smallmatrix} X(t)\\ X_t\\ u_t\end{smallmatrix}\right),\quad t> 0,\quad Z(0):=Z^0= \left(\begin{smallmatrix} \xi\\\varphi\\ \psi\end{smallmatrix}\right),
\end{align*}the delay system \eqref{SSdelay} is transformed to the following free-delay system
\begin{align}\label{free-delay}
\begin{cases}
dZ(t)=\left(\calA_m+\mathscr{K}\right) Z(t) dt + \mathscr{M}(Z(t))dW(t), & t> 0, \quad Z(0)=Z^0,\cr
 \mathscr{G} Z(t)=u(t),& t\ge 0,\cr Y(t)=\calM Z(t),& t\ge 0.
\end{cases}
\end{align}
This reformulation allows us to introduce the following definition.
\begin{definition}\label{delay-well-posed-def}
The stochastic system \eqref{SSdelay} is well-posed if the system \eqref{free-delay} is so.
\end{definition}
To state the main result of this section, we denote by $\calQ^H$ and $\calQ^U,$ the generators of the left shift semigroups $\calS^H$ and $\calS^H$ on $L^2([-r,0],H)$ and $L^2([-r,0],U)$  defined by
\begin{align*}
\calS^H (t)\phi_1=\1_{\{t+\cdot\le 0\}}\phi_1(t+\cdot),\quad \calS^U (t)\phi_2=\1_{\{t+\cdot\le 0\}}\phi_2(t+\cdot)
\end{align*}
for $t\ge 0,$  $\phi_1\in L^2([-r,0],H)$ and $\phi_2\in L^2([-r,0],U)$.
\begin{theorem}\label{thm-Well-posed-delay}
The stochastic delay system \eqref{SSdelay} is well-posed. Moreover,
for $\xi\in L^2_{\calF_0}(\Om,H)$,
$\varphi\in L^2_{\calF_0}(\Om,L^2([-r,0],H))$, $\psi\in L^2_{\calF_0}(\Om,L^2([-r,0],U))$,
and any $\al>0$, there is a constant $c:=c(\al)>0$ such that for any $u\in L^2_{\F}(0,\al;U)$
\begin{align*}
&\| R^\vartheta_{\Lambda}X_{\bullet}+R^\varpi_\Lambda u_{\bullet}\|_{L^2_{\F}(0,\al;U)}\cr & \quad\le
c \left(\|\xi\|_{L^2_{\calF_{0}}(\Om,H)}+\|\varphi\|_{L^2_{\calF_{0}}(\Om,L^2([-r,0],H))}+
\|\psi\|_{L^2_{\calF_{0}}(\Om,L^2([-r,0],U))}+\|u\|_{L^2_{\F}(0,\al;U)}\right),
\end{align*}
where $ R^\vartheta_{\Lambda}$ and $R^\varpi_\Lambda$ are the Yosida extensions of $R^\vartheta$ and $R^\varpi$ for $\calQ^H$ and $\calQ^U,$ respectively.
\end{theorem}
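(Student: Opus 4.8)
The plan is to derive the result from Theorem \ref{bounadry-theorem}, applied to the reformulated free-delay system \eqref{free-delay}; by Definition \ref{delay-well-posed-def} this is precisely what has to be proved. The first step is to read \eqref{free-delay} as an instance of the perturbed boundary control system \eqref{P-ABC}, under the dictionary $A_m\leftrightarrow\calA_m$, $K\leftrightarrow\mathscr{K}$, $\si\leftrightarrow\mathscr{G}$, $M\leftrightarrow\calM$, noise coefficient $\mathscr{M}$ (which is bounded on $\calX$ since $\Upsilon\in\calL(H)$), output space $\mathscr{Y}=U$ and state space $\calX$. I must then verify the standing hypotheses of Section \ref{sec:new-VCF}. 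Since $\calA_m$ is block diagonal and $\ker\mathscr{G}=\{(x,\phi_1,\phi_2)\in D(\calA_m):\phi_2(0)=0\}$, the operator $A_0:=(\calA_m)_{|\ker\mathscr{G}}$ equals $\diag(\calA,\calQ^U)$ on $D(\calA)\times D(\calQ^U)$; by \cite{Hadd-SF} the delay part $\calA$ generates $\calT$ on $\calH$, while $\calQ^U$ generates the left shift $\calS^U$ on $L^2([-r,0],U)$, so $A_0$ generates $\calT(\cdot)\times\calS^U(\cdot)$ on $\calX$. The boundary map $\mathscr{G}(x,\phi_1,\phi_2)=\phi_2(0)$ is surjective onto $U$, and since $0\in\rho(\calQ^U)$ always while we may assume $0\in\rho(\calA)$ (after an exponential rescaling, which leaves all admissibility and regularity statements intact), we get $0\in\rho(A_0)$.

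Next I would identify the Dirichlet operator $D\in\calL(U,\calX)$ associated with $\mathscr{G}$ and $\calA_m$: solving $\calA_m(x,\phi_1,\phi_2)=0$ subject to $\phi_2(0)=v$ forces (under $0\in\rho(\calA)$) the first two components to vanish and $\phi_2$ to be the constant function $\theta\mapsto v$, so $Dv=(0,0,v)$ (with $v$ read as a constant $U$-valued function) and $B:=(-A_{0,-1})D$ is the point-control operator $v\mapsto(0,0,\delta_0 v)$ in $\calX_{-1}$; set also $\mathscr{P}:=\mathscr{K}_{|D(A_0)}$ and $C:=\calM_{|D(A_0)}$. The heart of the proof is to show that the deterministic triples $(A_0,B,\mathscr{P})$ and $(A_0,B,C)$ are regular. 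Admissibility of $B$ for $A_0$ reduces — because $B$ feeds only the $L^2([-r,0],U)$-component through $\delta_0$ — to the admissibility of the point evaluation at $0$ for the left shift semigroup, i.e. $\int_0^\t T_{0,-1}(t-s)Bu(s)\,ds\in\calX$ for all $\t>0$, exactly as for the deterministic delay systems of \cite{Ha-Id-IMA}. Admissibility of $\mathscr{P}$ and $C$ for $A_0$ follows from the fact that, for a function of bounded variation, the Riemann--Stieltjes operators $R^\eta$, $R^\vartheta$, $R^\varpi$ restricted to $W^{1,2}$ are admissible observation operators for the shift generators $\calQ^H$, $\calQ^U$ (see \cite{Hadd-SF}), together with the stability of admissibility under the admissible boundary perturbation that links $\calA$ to $\diag(A,\calQ^H)$. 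Regularity itself is then obtained via the range criterion of Remark \ref{nice-rem}: one checks ${\rm Range}(R(\la,A_{0,-1})B)\subset D(\mathscr{P}_\Lambda)\cap D(C_\Lambda)$ by noting that the third component of $R(\la,A_{0,-1})Bv$ is the exponential $\theta\mapsto e^{\la\theta}v$, which lies in the domain of the Yosida extension of $R^\eta$ (resp. $R^\varpi$) relative to $\calQ^U$, while the remaining components land in the domain of the Yosida extension of $R^\vartheta$ relative to $\calQ^H$; the associated transfer functions then tend to $0$ as $\la\to+\infty$. These verifications are the delay-theoretic content of \cite{Ha-Id-IMA}, transcribed to the present product-space setting.

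With both triples shown to be regular, Theorem \ref{bounadry-theorem} yields a unique mild solution $Z(\cdot)\in\calC_\F(0,+\infty;L^2(\Om,\calX))$ of \eqref{free-delay} with $Z(t)\in D(\mathscr{P}_\Lambda)\cap D(C_\Lambda)$ for a.e. $t>0$ and
\begin{align*}
\|C_\Lambda Z(\cdot)\|_{L^2_\F(0,\al;U)}\le c\left(\|Z^0\|_{L^2_{\calF_0}(\Om,\calX)}+\|u\|_{L^2_\F(0,\al;U)}\right),
\end{align*}
which, by Definition \ref{delay-well-posed-def}, already proves that \eqref{SSdelay} is well-posed. To recover the stated estimate, I would use that $Z(t)=(X(t),X_t,u_t)$, that the Yosida extension $C_\Lambda$ relative to $A_0$ is identified on the relevant domains with the component-wise extensions $R^\vartheta_\Lambda$, $R^\varpi_\Lambda$ taken relative to $\calQ^H$, $\calQ^U$, so that $C_\Lambda Z(t)=R^\vartheta_\Lambda X_t+R^\varpi_\Lambda u_t$, and that $\|Z^0\|_\calX\le\|\xi\|_H+\|\varphi\|_{L^2([-r,0],H)}+\|\psi\|_{L^2([-r,0],U)}$; substituting these into the displayed inequality gives the claim. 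The main obstacle is the middle paragraph: establishing regularity of $(A_0,B,\mathscr{P})$ and $(A_0,B,C)$ and, in particular, identifying the Yosida extensions of the Stieltjes delay operators relative to $A_0$ with those relative to the pure shift generators $\calQ^H$, $\calQ^U$ — the rest being a routine application of Theorem \ref{bounadry-theorem} and the constructions of Sections \ref{Section2} and \ref{sec:new-VCF}.
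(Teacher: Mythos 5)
Your proposal follows essentially the same route as the paper: reformulate \eqref{SSdelay} as the free-delay boundary system \eqref{free-delay}, identify $\mathfrak{A}=\diag(\calA,\calQ^U)$, the point-control operator $\mathfrak{B}$, and the restrictions $\mathscr{P}$, $\mathfrak{C}$, verify admissibility and regularity of the two triples by reducing to the known shift/delay-system results, apply Theorem \ref{bounadry-theorem}, and finish by identifying $\mathfrak{C}_\Lambda Z(t)$ with $R^\vartheta_\Lambda X_t+R^\varpi_\Lambda u_t$. The only cosmetic difference is that you invoke the range criterion of Remark \ref{nice-rem} for regularity where the paper directly cites the regularity of $(\calQ^U,b^U,R^\eta)$, $(\calQ^H,b^H,R^\vartheta)$ and the transfer arguments of the delay literature, which amounts to the same verification.
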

\begin{proof}
It suffices to prove that the system \eqref{free-delay} is well-posed. To this end, we will apply Theorem \ref{bounadry-theorem}. In fact, the diagonal  operator
\begin{align*}
\mathfrak{A}:=\calA_m,\quad D(\mathfrak{A})=D(\calA)\times D(\calQ^U),
\end{align*}
generates a strongly continuous semigroup $\mathfrak{T}:=(\mathfrak{T}(t))_{t\ge 0}$ on $\calX$. On the other hand, the
operator $\mathscr{G}$ is surjective. As for $\la\in\rho(\mathfrak{A})=\rho(\calA),$ $\ker(\la-\calA_m)=\{0_{\calH}\}\times \{e_\la v:=e^{\la\cdot}v:v\in U\},$ one can easily see that the control operator associated with the system \eqref{free-delay} is
\begin{align*}
\mathfrak{B}=\left(\begin{smallmatrix}0\\0\\ b^U\end{smallmatrix}\right),
\end{align*}
where $b^U=(\la-\calQ^U_{-1}e_\la)$. As $b^U$ is an admissible control operator for $\calQ^U$ (see \cite{HA-Id-Rha-MCSS}), then $\mathfrak{B}$ is an admissible control operator for $\mathfrak{A}$. By using \cite[Lemma 6.2]{Hadd-SF}, one can see that $\mathscr{P}:=\mathscr{K}_{|D(\mathfrak{A})}$ and $\mathfrak{C}:=\cal{M}_{|D(\mathfrak{A})}$ are admissible observation operators for $\mathfrak{A}$. On the other hand,
according to \cite[Theorem 3]{HA-Id-Rha-MCSS}, the systems $(\calQ^U,b^U,R^\eta)$ and $(\calQ^H,b^H,R^\eta)$ are regular. Now  a similar argument as in \cite{Ha-Id-IMA}, \cite{Hadd-Zhong} shows that the systems $(\mathfrak{A},\mathfrak{B},\mathscr{P})$ and $(\mathfrak{A},\mathfrak{B},\mathfrak{C})$ are regular. Thus by Theorem \ref{bounadry-theorem}, the stochastic
system \eqref{free-delay} is well-posed and
\begin{align}\label{last-inega}
\| \mathfrak{C}_\Lambda Z(\cdot)\|_{L^2_{\F}(0,\al;U)}\le c
 \left( \left\|\left(\begin{smallmatrix} \xi\\\varphi\\ \psi\end{smallmatrix}\right)\right\|_{L^2_{\calF_0}(\Om,\calX)}+\|u\|_{L^2_{\F}(0,\al;U)}\right)
\end{align}
for $\al>0$ and a constant $c:=c(\al)>0$, where $\mathfrak{C}_\Lambda$ is the Yosida extension of $\mathfrak{C}$ for $\mathfrak{A}$. By \cite{Ha-Id-IMA},
we have
\begin{align*}
&\Sigma:=H\times [D(R^\mu_\Lambda)\cap D(R^\vartheta_\Lambda)]
\cap [D(R^\eta_\Lambda)\cap D(R^\varpi_\Lambda)]\subset D(\mathfrak{C}_\Lambda),\cr &
\mathfrak{C}_\Lambda=\begin{pmatrix}0& R^\vartheta_\Lambda & R^\varpi_\Lambda\end{pmatrix}\quad\text{on}\quad \Sigma.
\end{align*}
Using standard arguments as in \cite{Ha-Id-IMA}, we have $Z(t)\in \Sigma$ for a.e. $t\ge 0$ and $\P$-a.s. Thus the rest of the proof follows from
\eqref{last-inega}.
\end{proof}

\section{Observability of the stochastic semilinear system with unbounded observation operators}\label{sec:semilinear}
In this section, we introduce the concepts of admissibility and observability for semilinear stochastic systems. In fact we consider the system \eqref{semilinear-S}, where $C\in\calL(D(A),\mathscr{Y})$ and applications $G,F:H\to H$ satisfying
\begin{align}\label{Lip-f}
\begin{split}
 &\|G(x)-G(y)\|+\|F(x)-F(y)\|\leq L \|x-y\|,\quad\forall x,y\in H,\\
&\|G(x)\|^{2}+\|F(x)\|^{2}\leq \beta (1+\|x\|^{2}),\quad\forall x\in H.
\end{split}
\end{align}
for  constants $L>0$ and $\beta>0$.
It is well-known, see e.g. \cite{Da-Za}, that the condition \eqref{Lip-f} implies the existence of a unique mild solution $X(\cdot)\in \calC_{\F}(0,+\infty;L^2(\Om,H))$ of the system \eqref{semilinear-S} satisfying
\begin{align}\label{mild-sol}
X(t)=T(t)\xi+\int^t_0 T(t-s)G(X(s))ds+\int^t_0 T(t-s)F(X(s))dW(s)
\end{align}
for any $t\ge 0$ and $\xi\in L^2_{\calF_0}(\Om,H)$.
\begin{definition}\label{CAW-wellposed}
Let the condition \eqref{Lip-f} be satisfied. We say that the semilinear stochastic system \eqref{semilinear-S}  is {\em well-posed} if there exists an extension $\tilde{C}:D(\tilde{C})\subset H\to \mathscr{Y}$ of the operator $C$ such that $X(t;\xi)\in D(\tilde{C})$ for a.e. $t>0$ and $\mathbb{P}$-a.s and all $\xi\in L^2_{\calF_0}(\Om,H)$, and for any $\al>0,$ there exists a constant $c:=c_\al$ such that for all $\xi_1,\xi_2\in L^2_{\calF_0}(\Om,H)$,
\begin{align*}
\E\int^\al_0 \|\tilde{C}\left(X(t;\xi_1)-X(t;\xi_2)\right)\|^2_{\mathscr{Y}}dt\le c^2 \E\|\xi_1-\xi_2\|^2.
\end{align*}
\end{definition}
The following result gives the well-posedness of the system \eqref{semilinear-S}.
\begin{theorem}\label{output-rep}
Let the condition \eqref{Lip-f} be satisfied and assume that $C$ is an admissible observation operator for $A$. The following assertions hold.

\begin{itemize}
  \item [{\rm (i)}] The mild solution of the system \eqref{semilinear-S} satisfies $X(t,\xi)\in D(C_\Lambda)$ for a.e. $t>0$, $\P$-a.s and all $\xi\in L^2_{\calF_0}(\Om,H)$.
  \item [{\rm (ii)}]For any $\al>0,$ there exists a constant $\delta:=\delta(\al)>0$ such that for any $\xi_1,\xi_2\in L^2_{\calF_0}(\Om,H),$ we have
\begin{align*}
  \|C_\Lambda X(\cdot,\xi_{1})-C_\Lambda X(\cdot,\xi_{2})\|_{L^2_{\F}(0,\t;\mathscr{Y})}\leq \delta\|\xi_{1}-\xi_{2}\|_{L^2_{\calF_0}(\Om,H)},
  \end{align*}
\end{itemize}
In particular, the system \eqref{semilinear-S} is well-posed.
\end{theorem}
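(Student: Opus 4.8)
The plan is to read off part~(i) directly from the mild solution formula \eqref{mild-sol} by splitting $X(t,\xi)$ into its three summands and checking that each lies in $D(C_\Lambda)$, and then to get the Lipschitz estimate in part~(ii) by linearizing, applying the convolution estimates of Proposition~\ref{Hadd-SF-prop} and Proposition~\ref{fondamental-lemma} to the two convolution terms and the admissibility inequality \eqref{Obser-Estim0} to the semigroup term, and finally closing with a Gronwall bound on the difference of two solutions. The one thing to watch throughout is that $C$ is defined only on $D(A)$ and is not closeable, so we never commute $C$ with an integral: we keep the Yosida extension $C_\Lambda$ in front of each convolution, which is precisely what the two cited propositions licence.

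For part~(i), I would fix $\al>0$ and first note that, since $X(\cdot)\in\calC_{\F}(0,+\infty;L^2(\Om,H))$ and the growth bound in \eqref{Lip-f} gives $\|G(x)\|^2+\|F(x)\|^2\le\beta(1+\|x\|^2)$, the processes $s\mapsto G(X(s))$ and $s\mapsto F(X(s))$ are $\F$-adapted and belong to $L^2_{\F}(0,\al;H)$. Then $T(t)\xi\in D(C_\Lambda)$ for a.e.\ $t>0$, $\P$-a.s., by the admissibility of $C$ for $A$ recalled in Subsection~\ref{sec:RLS} (applied $\om$-wise to $\xi(\om)\in H$); $\int_0^t T(t-s)G(X(s))\,ds\in D(C_\Lambda)$ a.e., $\P$-a.s., by Proposition~\ref{Hadd-SF-prop} with $\zeta=G(X(\cdot))$; and $\int_0^t T(t-s)F(X(s))\,dW(s)\in D(C_\Lambda)$ a.e., $\P$-a.s., by Proposition~\ref{fondamental-lemma} with $\zeta=F(X(\cdot))$. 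Since $D(C_\Lambda)$ is a linear subspace, \eqref{mild-sol} gives $X(t,\xi)\in D(C_\Lambda)$ for a.e.\ $t>0$, $\P$-a.s.

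For part~(ii), I would set $\Delta(t):=X(t,\xi_1)-X(t,\xi_2)$, subtract the two copies of \eqref{mild-sol}, and use part~(i) and the linearity of $C_\Lambda$ to write, for a.e.\ $t>0$, $\P$-a.s.,
\begin{align*}
C_\Lambda\Delta(t)&=C_\Lambda T(t)(\xi_1-\xi_2)+C_\Lambda\int_0^t T(t-s)\bigl(G(X(s,\xi_1))-G(X(s,\xi_2))\bigr)\,ds\\
&\qquad+C_\Lambda\int_0^t T(t-s)\bigl(F(X(s,\xi_1))-F(X(s,\xi_2))\bigr)\,dW(s).
\end{align*}
Taking the $L^2_{\F}(0,\al;\mathscr{Y})$-norm, using $(a+b+c)^2\le 3(a^2+b^2+c^2)$, estimating the first term by \eqref{Obser-Estim0} extended to $C_\Lambda$, the second by Proposition~\ref{Hadd-SF-prop}, the third by Proposition~\ref{fondamental-lemma}, and using the Lipschitz bound in \eqref{Lip-f}, I would arrive at
\begin{align*}
\|C_\Lambda\Delta\|^2_{L^2_{\F}(0,\al;\mathscr{Y})}\le \kappa_1\,\E\|\xi_1-\xi_2\|^2+\kappa_2\,\E\int_0^\al\|\Delta(s)\|^2\,ds
\end{align*}
for constants $\kappa_1,\kappa_2$ depending only on $\al$, the admissibility constant $\ga$ and $L$.

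The remaining step is to bound $\E\int_0^\al\|\Delta(s)\|^2\,ds$ by $\E\|\xi_1-\xi_2\|^2$. Taking norms in the mild equation for $\Delta$, using $\|T(t)\|\le Me^{\om t}$ with $\om>\om_0(A)$, Cauchy--Schwarz on the Bochner integral, It\^o's isometry on the stochastic integral, and \eqref{Lip-f}, one gets $\E\|\Delta(t)\|^2\le c\,\E\|\xi_1-\xi_2\|^2+c\int_0^t\E\|\Delta(s)\|^2\,ds$ on $[0,\al]$ for some $c=c(\al)>0$; Gronwall then yields $\E\int_0^\al\|\Delta(s)\|^2\,ds\le \al c\,e^{c\al}\,\E\|\xi_1-\xi_2\|^2$. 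Plugging this into the part~(ii) estimate gives the claimed inequality with a suitable $\delta:=\delta(\al)>0$, and taking $\tilde C=C_\Lambda$ in Definition~\ref{CAW-wellposed} gives well-posedness. The calculations (Bochner/It\^o estimates, Gronwall) are routine; I do not expect a real obstacle here --- the only delicate point is purely bookkeeping, namely keeping $C_\Lambda$ and never $C$ in front of every convolution, which is exactly what Propositions~\ref{Hadd-SF-prop} and~\ref{fondamental-lemma} make legitimate.
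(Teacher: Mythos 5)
Your proposal is correct and follows essentially the same route as the paper: part (i) by splitting the mild solution \eqref{mild-sol} into the semigroup term, the Bochner convolution and the stochastic convolution and invoking admissibility, Proposition~\ref{Hadd-SF-prop} and Proposition~\ref{fondamental-lemma}, and part (ii) by the same three-term split with the Lipschitz condition \eqref{Lip-f} and the Gronwall bound $\E\|X(t,\xi_1)-X(t,\xi_2)\|^2\le \upsilon_\al\,\E\|\xi_1-\xi_2\|^2$ (the paper's \eqref{E-estim}), which the paper simply establishes before rather than after the $C_\Lambda$ estimate. The only difference is ordering and bookkeeping of constants, so there is nothing substantive to add.
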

\begin{proof}
The  assertion (i) follows by combining  Proposition \ref{Hadd-SF-prop} and Proposition \ref{fondamental-lemma}. Let us now prove the assertion (ii). Let $\xi_1,\xi_2\in L^2_{\calF_0}(\Om,H)$ and $\al>0$. By using \eqref{mild-sol} and Gronwall's inequality, we have
 \begin{align}\label{E-estim}
 \E\|X(t,\xi_{1})-X(t,\xi_{2})\|^{2}_H\le \upsilon_\al \E\|\xi_1-\xi_2\|^2_H,\qquad t\in [0,\al].
 \end{align}
Let $\Psi$ be the extended output map associated with the admissibility of $C$ for $A$. By using Proposition \ref{Hadd-SF-prop}, Proposition \ref{fondamental-lemma} and the condition \eqref{Lip-f}, we obtain
 \begin{align*}
 & \E\int^\al_0\|C_\Lambda (X(t,\xi_{1})-X(t,\xi_{2}))\|^2_{\mathscr{Y}} dt\le  3\E\int^\al_0\|(\Psi(\xi_1-\xi_2))(t)\|^2_{\mathscr{Y}}dt\cr  &\qquad  +
 3 \E\int^\al_0 \left\|C_\Lambda \int^t_0 T(t-s)(G(X(s,\xi_1))-G(X(s,\xi_2))) ds\right\|^2_{\mathscr{Y}}dt\cr & \qquad\qquad+ 3 \E\int^\al_0 \left\|C_\Lambda \int^t_0 T(t-s)(F(X(s,\xi_1))-F(X(s,\xi_2))) dW(s)\right\|^2_{\mathscr{Y}}dt\cr & \qquad\le 3\ga^2 \mathbb{E}\|\xi_{1}-\xi_{2}\|^{2}_{H}+  3 \ga^2 \al \E\int^\al_0 \|G(X(s,\xi_1))-G(X(s,\xi_2))\|^2_Hds\cr & \qquad\qquad +3\ga^2 \E\int^\al_0 \|F(X(s,\xi_1))-F(X(s,\xi_2))\|^2_Hds \cr & \qquad \le \delta^2 \mathbb{E}\|\xi_{1}-\xi_{2}\|^{2}_H
 \end{align*}
 for a constant $\delta:= \ga \sqrt{3 (1+ \nu_\al L^2 (\al^2+\al))}$, where $L$ is the Lipschitz condition given in \eqref{Lip-f}, and $\ga$ is the admissibility constant associated with the operators $C$ and $A$.
\end{proof}
The following concept of observability is well known for deterministic linear systems.
\begin{definition}\label{obs-ds}
Let $C\in\calL(D(A),Y)$ be an admissible observation operator for $A$ and let $\Psi$ be the associated extended output map. For $\t>0$, we say that $(C,A)$ is $\t$-exactly observable if there exists a constant $\ga_\t>0$ such that for any $x\in X,$ we have
\begin{align*}
\int^\t_0 \|(\Psi x)(t)\|^2_{\mathscr{Y}}dt\ge \ga_\t^2 \|x\|^2_H.
\end{align*}
\end{definition}
By analogy to the exact observability of linear systems, we introduce the following definition for semilinear stochastic systems.
\begin{definition}\label{Def-obs-nl}
Let the condition \eqref{Lip-f} be satisfied and assume that $C$ is an admissible observation operator for $A$. For some $\tau>0,$ we say that the system \eqref{semilinear-S} is $\tau$-exactly observable if there exists a constant $\kappa_{\tau}>0$ such that for any $\xi_{1},\xi_{2}\in L^2_{\calF_0}(\Om,H),$
\begin{equation*}
%\label{sobty}
\|C_\Lambda X(\cdot;\xi_{1})-C_\Lambda X(\cdot;\xi_{2})\|_{L^{2}_{\F}(0,\tau;\mathscr{Y})}\geq \kappa_{\tau} \|\xi_{1}-\xi_{2}\|_{L^2_{\calF_0}(\Om,H)}.
\end{equation*}
\end{definition}
Now we can state the main result of this section.
\begin{theorem} \label{Stochastic-oservability-result}
Let the condition \eqref{Lip-f} be satisfied and assume that the pair $(C,A)$ is $\tau$-exactly observable for some $\tau>0$.
Then there exists a constant $\Theta_{\t}>0$ such that the stochastic semilinear system \eqref{semilinear-S} is $\t$--exactly observable whenever the Lipschtiz constant $L<\Theta_{\t}$.
\end{theorem}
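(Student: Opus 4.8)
The plan is to compare $X(\cdot;\xi_{1})$ and $X(\cdot;\xi_{2})$ through the variation of constants formula \eqref{mild-sol}, to use Theorem~\ref{output-rep} to make sense of $C_{\La}$ applied to the difference, and then to treat the two nonlinear terms as small perturbations of the $\t$-exactly observable pair $(C,A)$. First I would fix $\t>0$ and $\xi_{1},\xi_{2}\in L^{2}_{\calF_{0}}(\Om,H)$ and set $Z(t):=X(t;\xi_{1})-X(t;\xi_{2})$, $\Delta G(s):=G(X(s;\xi_{1}))-G(X(s;\xi_{2}))$, $\Delta F(s):=F(X(s;\xi_{1}))-F(X(s;\xi_{2}))$; subtracting the two copies of \eqref{mild-sol} gives
\begin{align*}
Z(t)=T(t)(\xi_{1}-\xi_{2})+\int^{t}_{0}T(t-s)\Delta G(s)\,ds+\int^{t}_{0}T(t-s)\Delta F(s)\,dW(s).
\end{align*}
Since $C$ is admissible for $A$, $T(t)(\xi_{1}-\xi_{2})\in D(C_{\La})$ with $C_{\La}T(\cdot)(\xi_{1}-\xi_{2})=\Psi(\xi_{1}-\xi_{2})$ a.e.; by Proposition~\ref{Hadd-SF-prop} the deterministic convolution of $T$ with $\Delta G$ lies in $D(C_{\La})$; and by Proposition~\ref{fondamental-lemma} the stochastic convolution of $T$ with $\Delta F$ lies in $D(C_{\La})$; all for a.e.\ $t\in[0,\t]$, $\P$-a.s.\ (here $\Delta G,\Delta F\in L^{2}_{\F}(0,\t;H)$ because $Z\in\calC_{\F}(0,+\infty;L^{2}(\Om,H))$ and \eqref{Lip-f} holds). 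Applying the linear operator $C_{\La}$ to the identity above decomposes $C_{\La}Z(t)$ into the corresponding three contributions.

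Next I would estimate these contributions in $L^{2}_{\F}(0,\t;\mathscr{Y})$. The $\t$-exact observability in Definition~\ref{obs-ds} holds pathwise for the $\calF_{0}$-measurable vector $\xi_{1}(\om)-\xi_{2}(\om)$, and integrating $\int^{\t}_{0}\|(\Psi(\xi_{1}-\xi_{2}))(t,\om)\|^{2}_{\mathscr{Y}}\,dt\ge \ga_{\t}^{2}\|\xi_{1}(\om)-\xi_{2}(\om)\|^{2}$ over $\Om$ yields $\|\Psi(\xi_{1}-\xi_{2})\|_{L^{2}_{\F}(0,\t;\mathscr{Y})}\ge \ga_{\t}\|\xi_{1}-\xi_{2}\|_{L^{2}_{\calF_{0}}(\Om,H)}$. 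For the nonlinear contributions, Proposition~\ref{Hadd-SF-prop} and Proposition~\ref{fondamental-lemma} bound their $L^{2}_{\F}(0,\t;\mathscr{Y})$-norms by $\sqrt{\t}\,\ga\,\|\Delta G\|_{L^{2}_{\F}(0,\t;H)}$ and $\ga\,\|\Delta F\|_{L^{2}_{\F}(0,\t;H)}$, with $\ga=\ga(\t)$ the admissibility constant of $C$ for $A$; by \eqref{Lip-f} we have $\|\Delta G(s)\|,\|\Delta F(s)\|\le L\|Z(s)\|$, and \eqref{E-estim} from the proof of Theorem~\ref{output-rep} gives $\E\|Z(s)\|^{2}\le\upsilon_{\t}\E\|\xi_{1}-\xi_{2}\|^{2}$ on $[0,\t]$, so both norms are at most $L\sqrt{\t\,\upsilon_{\t}}\,\|\xi_{1}-\xi_{2}\|_{L^{2}_{\calF_{0}}(\Om,H)}$. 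The triangle inequality in $L^{2}_{\F}(0,\t;\mathscr{Y})$ then gives
\begin{align*}
\|C_{\La}X(\cdot;\xi_{1})-C_{\La}X(\cdot;\xi_{2})\|_{L^{2}_{\F}(0,\t;\mathscr{Y})}\geq\bigl(\ga_{\t}-\ga L(\t+\sqrt{\t})\sqrt{\upsilon_{\t}}\,\bigr)\|\xi_{1}-\xi_{2}\|_{L^{2}_{\calF_{0}}(\Om,H)}.
\end{align*}

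The hard part is that $\upsilon_{\t}$ in \eqref{E-estim} is itself produced by a Gronwall argument, hence an increasing function of $L$ (of order $e^{cL^{2}\t}$), so one cannot simply solve $\ga_{\t}-\ga L(\t+\sqrt{\t})\sqrt{\upsilon_{\t}}>0$ for $L$ without circularity. I would break the circularity by restricting a priori to $L\le 1$, so that $\upsilon_{\t}(L)\le\upsilon_{\t}(1)=:\bar{\upsilon}_{\t}$ is a constant independent of $L$, and then set
\begin{align*}
\Theta_{\t}:=\min\Bigl\{1,\ \frac{\ga_{\t}}{\ga(\t+\sqrt{\t})\sqrt{\bar{\upsilon}_{\t}}}\Bigr\}.
\end{align*}
For every $L<\Theta_{\t}$ the coefficient $\ka_{\t}:=\ga_{\t}-\ga L(\t+\sqrt{\t})\sqrt{\bar{\upsilon}_{\t}}$ is strictly positive, and the displayed lower bound is exactly the inequality required in Definition~\ref{Def-obs-nl}; thus the semilinear system \eqref{semilinear-S} is $\t$-exactly observable whenever $L<\Theta_{\t}$, as claimed.
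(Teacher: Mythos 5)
Your proposal is correct and follows the same strategy as the paper: split $X(\cdot;\xi_1)-X(\cdot;\xi_2)$ into the free term $T(\cdot)(\xi_1-\xi_2)$ plus the deterministic and stochastic convolutions of the differences of $G$ and $F$, control the two convolution terms through Proposition~\ref{Hadd-SF-prop}, Proposition~\ref{fondamental-lemma} and the Gronwall estimate \eqref{E-estim}, and then play the observability lower bound for $(C,A)$ against a perturbation of size $O(L)$. Two points where your write-up is actually tighter than the paper's are worth noting. First, you combine the three contributions with the reverse triangle inequality at the level of $L^2_{\F}(0,\t;\mathscr{Y})$-norms, which is a valid inequality; the paper subtracts \emph{squared} norms ($\E\int\|a+b\|^2\ge\E\int\|a\|^2-\E\int\|b\|^2$), which is not true in general and needs exactly the repair you made (or a factor-adjusted Young-type inequality). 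Second, you explicitly confront the fact that the constant $\upsilon_\t$ in \eqref{E-estim} is produced by Gronwall and hence grows with $L$, so solving $\ga_\t-\ga L(\t+\sqrt{\t})\sqrt{\upsilon_\t(L)}>0$ naively would be circular; your a priori cap $L\le 1$ together with the monotonicity of $\upsilon_\t$ in $L$ gives a clean, non-circular definition of $\Theta_\t$, whereas the paper defines $h(L)=\delta_\t^2-a_\t^2L^2$ treating $a_\t$ as $L$-independent and leaves this point implicit. So your argument proves the same statement by the same route, while quietly patching these two soft spots.
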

\begin{proof}
Let $\xi_1,\xi_2\in L^2_{\calF_0}(\Om,H)$. By assumption, there exists $\delta_\t>0$ such that
\begin{align*}
\E \int^\t_0 \|(\Psi(\xi_1-\xi_2))(t)\|^2_{\mathscr{Y}}dt\ge \delta_\t^2 \E\|\xi_1-\xi_2\|^2_H.
\end{align*}
On the other hand, we write the mild solution \eqref{mild-sol} as
\begin{align*}
X(t;\xi)=T(t)\xi+\K_t (X(\cdot;\xi))
\end{align*}
with
\begin{align*}
\K_t (X(\cdot;\xi)):=\int_{0}^{t}T(t-s)G(X(s))ds+\int_{0}^{t}T(t-s)F(X(s))dW(s).
\end{align*}
Let $\xi_1,\xi_2\in L^2_{\calF_0}(\Om,H)$. As in the proof of Theorem \ref{output-rep}, Proposition \ref{Hadd-SF-prop}, Proposition \ref{fondamental-lemma} and the estimate \eqref{E-estim}, we obtain
\begin{align*}
\E \int^\t_0 \|C_\Lambda \left(\K_t (X(\cdot;\xi_1))-\K_t (X(\cdot;\xi_2))\right)\|^2_{\mathscr{Y}}dt\le a_\t^2 L^2 \E\|\xi_1-\xi_2\|^2_H,
\end{align*}
for a constant $a_\t^2:=2 \ga^2 \upsilon_\t (\t^2+\t)$. Moreover, we have
\begin{align*}
  \mathbb{E}\int_{0}^{\t}\|C_\Lambda X(t,\xi_{1})-C_\Lambda X(t,\xi_{2})\|^{2}_{\mathscr{Y}} dt &\ge \E \int^\t_0 \|C_\Lambda T(t)(\xi_1-\xi_2)\|^2_{\mathscr{Y}}dt\cr &  -\E \int^\t_0 \|C_\Lambda \left(\K_t (X(\cdot;\xi_1))-\K_t (X(\cdot;\xi_2))\right)\|^2_{\mathscr{Y}}dt\cr &  \ge \left(\delta_\t^2-a_\t^2L^2\right)\E\|\xi_1-\xi_2\|^2_H\cr & :=h(L) \E\|\xi_1-\xi_2\|^2_H,
  \end{align*}
where $h(L)=\delta_\t^2-a_\t^2 L^2$. The function $h$ is continuous from $\R^+$ to $(-\infty,\delta_\t^2]$ and strictly decreasing, hence it is bijective. Then there exists a unique  $\Theta_\t>0$ such that $h(\Theta_\t)=0$. Hence $h(L)>0$ whenever $L<\Theta_\t$. Thus the system \eqref{semilinear-S} is $\t$-observable whenever $L<\Theta_\t$.
\end{proof}
%\begin{remark}
%In the deterministic case, the robustness of exact observability
%(and equivalently exact controllability) under linear unbounded perturbations has been studied in an abstract way
%in \cite{Hadd-JEE} (see also the references therein for certain special cases). There the author
%showed that such properties are unchanged if the initial linear system is perturbed by small perturbations.
%Somewhat corresponding results for deterministic semilinear systems can be found in \cite{BJ-09}. We also mention that Theorem \ref{Stochastic-oservability-result} is a stochastic version of \cite[Theorem 5.6]{BJ-09}.
%\end{remark}
\begin{example}\label{Example1}
Let $\mathscr{O}=(0,\pi)\times (0,\pi)$,  $\Gamma$ be a open nonempty set of $\partial \mathscr{O}$, $q(x)$ some polynomial with impair degree, and $\eta_1,\eta_2\in (0,+\infty)$. We consider the semi-linear Dirichlet boundary observation Schr\"odinger system
\begin{align}\label{schro-exa}
\begin{cases}
dX(t,x)=\left(-i\Delta X(t,x)+ \eta_1 \left(X(t,x)\right)^3\right)dt\cr \hspace{5.5cm}+\eta_2 q(X(t,x)) dW(t,x),& x\in \mathscr{O},\;t\ge 0,\cr X(0,x)=\xi(x),& x\in \mathscr{O},\cr \displaystyle\frac{\partial}{\partial\nu} X(t,x)=0,& x\in \partial\mathscr{O},\;t\ge 0,\cr Y(t,x)=X(t,x)& x\in \Gamma,\; t\ge 0.
\end{cases}
\end{align}
Define the state space $H=L^2(\mathscr{O})$ and the control space $U=L^2(\Gamma)$. Moreover, we consider the following operators
\begin{align*}
& A\psi= -i\Delta \psi,\qquad D(A)=\left\{\psi\in H^2(\mathscr{O}): \frac{\partial}{\partial\nu} \psi=0 \right\}\cr & C\psi=\psi_{|\Gamma},\qquad \psi\in D(A)\cr & G(\psi)(x)= \eta_1(\psi(x))^3,\quad F(\psi)(x)=\eta_2 q(\psi(x)),\qquad x\in \mathscr{O},\quad \psi\in H.
\end{align*}
Thus the system \eqref{schro-exa} takes the form of the abstract form \eqref{semilinear-S}. On the other hand, it is well-known that the operator $A$ is   skew-adjoint  and generates a strongly continuous semigroup $\T:=(T(t))_{t\ge 0}$ on $H$. Moreover, according to \cite[Proposition 3.2]{RTTT-JFA}, there exists $\t>0$ such that $(C,A)$ is $\t$-exactly observable. In addition,  the applications $G,F:H\to H$ satisfy the condition \eqref{Lip-f}. Now according to Theorem \ref{Stochastic-oservability-result}, the system  \eqref{schro-exa} is $\t$-exactly observable whenever $\eta=\max(\eta_1,\eta_2)$ is small enough.
\end{example}
\section*{Acknowledgments} We would like to thank the editors and the referees whose detailed comments helped us to improve the organization and the content of the paper.

\end{document}